\title{A simple computational interpretation of set theory}
\author{Daniel M\'ehkeri\\dmehkeri@gmail.com}
\date{21 Feb 2011}
\newtheorem{thm}{Theorem}[section]
\newtheorem{mthm}[thm]{Meta-theorem}
\newtheorem{lem}[thm]{Lemma}
\newtheorem{cor}[thm]{Corollary}
\newtheorem{defn}[thm]{Definition}
\newenvironment{proof}{\textbf{Proof}:}{$\blacksquare$}
\newenvironment{rem}{\par\noindent\textbf{Remark}:}{}
\numberwithin{equation}{section}
\def\allin#1#2{\forall #1\!\in\!#2. \;}
\def\somein#1#2{\exists #1\!\in\!#2. \;}
\def\allset#1{\allin{#1}{\mathbb{V}}}
\def\someset#1{\somein{#1}{\mathbb{V}}}
\def\allnat#1{\allin{#1}{\mathbb{N}}}
\def\somenat#1{\somein{#1}{\mathbb{N}}}
\def\setb#1#2{\{#1\; | \; #2\}}
\def\setbn#1#2#3{\setb{#2}{#1\!\in\!\mathbb{N}\land#3}}
\def\empty{\varnothing}
\def\pow#1{\mathbb{P}(#1)}
\def\neumarral#1{\underline{#1}}
\def\p1{\pow{\neumarral{1}}}
\def\formalsys#1{$\mathbf{#1}$}
\def\tuple#1{\langle#1\rangle}
\def\stuple#1{\langle\!\!\circ#1\circ\!\!\rangle}
\def\defd#1{{#1\!\!\downarrow}}
\def\abs#1{\left|#1\right|}
\def\cd#1#2{#1 \diamond #2}
\def\el#1#2{#1\left[#2\right]}
\begin{document}
\maketitle

\begin{abstract}
\formalsys{CZF} is a system of set theory which, over classical logic, is equivalent to \formalsys{ZF}, while over intuitionistic logic, it has a well-known constructive type-theoretic interpretation. This article introduces a simpler, intuitive family of constructive interpretations: sets are {\it well-founded extensional computable conditional enumerations of sets}. One interpretation in this family is just this: all sets are inductively built from $\empty$ by iterating the construction $\setbn{n}{f_n}{g_n=h_n}$, where, in turn, $g$ and $h$ are computable sequences of sets, and $f$ is a computable sequence such that $f_n$ is a set when $g_n$ and $h_n$ are extensionally equal. Extended Church's Thesis, an assumption which is incompatible with classical logic, is required to make this a model of \formalsys{CZF}. Besides its foundational interest, it yields direct conservativity proofs for certain choice principles, the Subcountability axiom, and for some so-called Omniscience principles, including first-order arithmetic Omniscience. A larger interpretation in this family also models the Regular Extension Axiom. 

\bigskip \par\noindent{\it Keywords}: Constructive set theory, realizability interpretation, conservativity results.\par\noindent MSC: 03F50
\end{abstract}
\section{Introduction}

Set theory studies more or less arbitrary collections that are well-founded and extensional. It provides a simple, unifying framework for all mathematics; for ordinary classical mathematics, the formal system of Zermelo-Fraenkel set theory with Choice (\formalsys{ZFC}), suffices. From the point of view of constructive and predicative mathematics, \formalsys{ZFC} is not suitable, but can be made so with remarkably little change. To this end, constructive Zermelo-Fraenkel set theory (\formalsys{CZF}) was introduced by Aczel. Over classical logic, \formalsys{CZF} has the same theorems as Zermelo-Fraenkel set theory without Choice (\formalsys{ZF}), and in fact its axiomatisation is not far from the way \formalsys{ZF} is usually axiomatised. 

To briefly describe the differences between \formalsys{ZF} to \formalsys{CZF}: most importantly of course \formalsys{CZF} uses intuitionistic logic, so the principle of the Excluded Middle (\formalsys{EM}) is absent. Extensionality, Pairing, Union, and Infinity are left as is. Replacement, Foundation, and Power Set in \formalsys{ZF} become Strong Collection, Bounded Separation, Set Induction, and Subset Collection in \formalsys{CZF} \cite[\S 2]{AR}

These substitutions are of no effect classically. Constructively, unacceptable principles are weakened. Foundation can be made acceptable by a contraposition: {\it no inhabited set intersects all of its elements}. Power Set can be made acceptable by switching to Exponentiation: {\it given two sets, there is a set of all functions from one set to the other}. This is weaker since constructively there is no ``set of all truth values'' to exponentiate. But these can then acceptably be strengthened. Constructive Foundation becomes Set Induction; the latter does not follow from the former constructively. Exponentiation becomes Subset Collection. Replacement can also be strengthened to Strong Collection and Bounded Separation. Separation does not follow from Replacement constructively. 

The infamous axiom of Choice is dropped entirely in \formalsys{CZF}, but the principle of Dependent Choices is widely used even in constructive and predicative mathematics, and is often taken as an additional axiom on top of \formalsys{CZF}. Two strengthenings of it are used as well. One is Relativised Dependent Choices (\formalsys{RDC}) \cite[\S 8.2]{AR}, extending over classes instead of just sets. Classically this is redundant, being just a consequence of \formalsys{DC}. Another is the Presentation principle \cite[\S 8.3]{AR}, which asserts that every set is the image of a so-called {\it base}. This is also known as the existence of Enough Projectives (\formalsys{EP}) in the category of sets. Not much is known about \formalsys{EP} classically. It is a consequence of full Choice, it implies \formalsys{DC}, and the latter implication cannot be reversed; Rathjen remarks, however, that it is an open problem whether \formalsys{EP} implies full Choice \cite[\S 5]{MR}.

It is not claimed that the axioms of \formalsys{CZF} are self-evidently constructively or predicatively acceptable. (It is not self-evident that they are classically acceptable, for that matter.) Rather, as shown by Aczel, they have an interpretation in Martin-L\"of's type-theoretic framework \cite{PA}, which also provides an interpretation for \formalsys{RDC} and \formalsys{EP} \cite{PA2}. Martin-L\"of's framework is a logic-free system of constructions that can be given a strong justification on its own \cite{PML}; alternately, a computational interpretation of sets can be obtained essentially by composition of the interpretation of sets in terms of types with a computational interpretation of types \cite[\S XII]{MB}.

This article introduces a simplified and more intuitive approach to the computational interpretation of sets, and also explicitly states the numerical principles on which the interpretation depends. The general definition of a set is just this:

\begin{defn}\label{def-g-set}Sets are well-founded extensional computable conditional enumerations of sets. Specifically:\begin{itemize}
\item If $p$ is a computable sequence of Meaningful conditions, and $f$ is a computable sequence such that $f_n$ is a set if $p_n$ is True, then $\setbn{n}{f_n}{(p_n\text{ is True})}$ is a set.
\item All sets are built inductively in this way.
\item Extensional equality between sets has the expected recursive definition.
\end{itemize}
\end{defn}
This definition is more specific than the classical concept, but by itself it is still too vague to be constructively useful, as it makes reference to Meaning and Truth. Fortunately, no insight into these great open problems of philosophy will be required. Instead, they should be treated as undefined words (and will be kept capitalised). The utility of this definition comes from the fact that usable set theory can be derived using relatively straightforward assumptions on Meaning. In particular, if the Meaningful conditions are taken to be extensional equations between two prior sets (plus a trivially false condition to get the induction off the ground), then a simple and mathematically precise definition is obtained:
\begin{defn}\label{def-e-set}$\;$\begin{itemize}
\item $\empty$ is a set. 
\item If $g$ and $h$ are computable sequences of sets, and if $f$ is a computable sequence such that $f_n$ is a set if $g_n = h_n$, then $\setbn{n}{f_n}{g_n=h_n}$ is a set.
\item All sets are built inductively from $\empty$ in this way.
\item Extensional equality between sets has the expected recursive definition.
\end{itemize}\end{defn}
To complete this definition needs only familiar and precise concepts: computability, inductive definition, as well as the meaning of extensional equality and the set builder notations. The inductive definition is no longer a strict accessibility definition, since extensional equality now interacts with the generating clauses for sets. Still, this has an intuitive justification: extensional equality is recursively defined in terms of prior sets. This is the same intuition that motivates the schema of simultaneous inductive-recursive definition \cite{PD}, and in fact this definition will be shown to fit into that schema.

A computable sequence of natural numbers can of course be coded by a natural number, so this can be expressed entirely in terms of natural numbers. It can be formalised in first-order arithmetic with intuitionistic logic (Heyting arithmetic, \formalsys{HA}) plus some axioms for the inductive-recursive definition. It leaves no ambiguity as to what a set is. And, with the assistance of Extended Church's Thesis (\formalsys{ECT}), all of \formalsys{CZF+RDC+EP} can be proven from it. \formalsys{ECT} is incompatible with classical logic, but it is well-known to be conservative over \formalsys{HA}, and will be shown to also be conservative over the inductive-recursive axioms.

Definition \ref{def-g-set}, on the other hand, allows progressively stronger assumptions on Meaning, and is easier to work with formally. This paper will therefore deal mainly with sets as general well-founded extensional computable conditional enumerations. Section 2 starts with no assumptions on Meaning whatsoever, and establishes the logical infrastructure of set theory. In section 3, with only trivial assumptions on Meaning, the basic sets are constructed. In section 4, \formalsys{ECT} is added to these trivial assumptions, and almost all of \formalsys{CZF+RDC+EP} is proven. A non-trivial assumption on Meaning in section 5 allows the proof to be completed.

In section 6 it is shown that the proof of \formalsys{EP} can in fact be strengthened to show the existence of Enough {\it Subcountable} Projectives (\formalsys{ESP}). This results in consequences that are incompatible with \formalsys{ZFC}, and some basic ones are listed.

The next three sections go backwards. Sections 7 reverses the interpretation back into \formalsys{CZF}, providing direct proofs of conservativity results for \formalsys{RDC} and \formalsys{ESP}. Section 8 gives a different reversal into \formalsys{ID_1}, a classical theory of inductive definitions. Section 9 takes up Definition \ref{def-e-set}, and shows that it is subsumed under Definition \ref{def-g-set} under exactly the assumptions of section 5.

Then, section 10 continues beyond \formalsys{CZF} with an even stronger assumption on Meaning to capture Aczel's {\it Regular Extension} axiom (\formalsys{REA}), and section 11 reverses the new interpretation back into \formalsys{CZF+REA}.

Finally section 12 discusses \formalsys{ECT}, and proves conservativity for some restricted forms of \formalsys{EM} called Omniscience principles, such as the decidability of all sentences of first-order arithmetic and of the well-foundedness of computable relations. These may be of interest from a weakly Platonist perspective which attributes objectively determinate truth or falsity to statements involving natural numbers, but not necessarily to those involving arbitrary sets of natural numbers, or involving sets in general.

\section{Logical principles}

From now on, for clarity, the term ``v-set'' will be used to denote the numerical code for a set according to Definition \ref{def-g-set}. In addition to the language of first-order arithmetic, there will be three unary predicates, $\mathcal{V}$, $\mathcal{M}$, and $\mathcal{T}$. $\mathcal{V}(x)$ is to be read ``$x$ is a v-set'', $\mathcal{M}(p)$ is to be read ``$p$ is a Meaningful condition'', and $\mathcal{T}(p)$ is to be read ``$p$ is a True condition''. With these, it can be specified exactly how a sentence in the language of first-order set theory is to be interpreted.

The Cantor pairing function from $\mathbb{N}^2$ to $\mathbb{N}$ will be denoted by the binary bracket $\tuple{\cdot,\cdot}$. $(\cdot)_L$ and $(\cdot)_R$ denote the corresponding left and right projections. The Kuratowski set-theoretic ordered pair will be denoted $\stuple{\cdot,\cdot}$ to distinguish it from the number-theoretic ordered pair. The coding of a natural number as a v-set (finite von Neumann ordinal) will be denoted $\neumarral{n}$, to distinguish it from $n$ as a natural number. 

Kleene's $\mathbf{T}$ predicate and $\mathbf{U}$ function will be used. $\mathbf{T}(e,i,o)$ asserts that the computation of the $e^{th}$ function on input $i$ terminates, and that $\mathbf{U}(o)$ is its output. Function application will be written by juxtaposition; the sentence $\defd{ei}$ means that $\somenat{o} \mathbf{T}(e,i,o)$, and the partial term $ei$ is equal to $\mathbf{U}(o)$ in that case (and will only be used in contexts where it is defined). The term $\Lambda n.\tau(n)$ is a number for the function computing the expression $\tau$.

The convention that a quantifier binds as far to the right as possible is adopted. A period will be placed after the quantifier as a reminder of this.

The predicate $\mathcal{V}$ is inductively defined following Definition \ref{def-g-set}. First introduce an abbreviation $\cd{x}{n}$ for the $n^{th}$ condition of the v-set $x$, and $\el{x}{n}$ for its $n^{th}$ conditional element, as follows:
\begin{align*}
\cd{x}{n} &= (xn)_R \\
\el{x}{n} &= (xn)_L
\end{align*}
(A slight liberty is taken with Definition \ref{def-g-set}, in that the pair of computable sequences has been switched for a computable sequence of pairs.) Assume: \begin{align}
\allnat{x} &(\allnat{n} \defd{xn} \wedge \mathcal{M}(\cd{x}{n}) \wedge (\mathcal{T}(\cd{x}{n}) \to \mathcal{V}(\el{x}{n}))) \to \mathcal{V}(x) \label{eq-vi}\\
(\allnat{x} &(\allnat{n} \defd{xn} \wedge \mathcal{M}(\cd{x}{n}) \wedge (\mathcal{T}(\cd{x}{n}) \to \phi(\el{x}{n}))) \to \phi(x)) \to (\allnat{x} \mathcal{V}(x) \to \phi(x)) \label{eq-ve}
\end{align}
\eqref{eq-ve} is a schema in $\phi$. The predicates $\mathcal{M}$ and $\mathcal{T}$ are for the moment completely unspecified, as discussed in the previous section. Otherwise, $\mathcal{V}$ is just given by an accessibility definition, which is immediately constructively valid. 

$IND$ will be an abbreviation for the indices for which the conditions of a v-set are True, and $EL$ will abbreviate the elements of a v-set, defined as follows:\begin{align*}
IND(x) &= \setb{n \in \mathbb{N}}{\mathcal{T}(\cd{x}{n})}\\
EL(x) &= \setb{\el{x}{n}}{n \in IND(x)}
\end{align*}
To be clear, $IND(x)$ and $EL(x)$ are subclasses of $\mathbb{N}$ rather than v-sets in the sense being defined here. In particular, membership in $EL(x)$ does not respect extensionality, rather, it ranges over the ``intensional members'' $\el{x}{n}$. Finally, for quantification over all v-sets, define $\mathbb{V}$ as another subclass of $\mathbb{N}$ as follows:
$$\mathbb{V} = \setb{n \in \mathbb{N}}{\mathcal{V}(n)}$$

For clarity, $\simeq$ will represent the extensional equality between v-sets. The regular $=$ will represent the decidable, intensional equality between natural numbers. For $\simeq$ it will be necessary to introduce a temporary fix-point definition. Assume:\begin{align}\begin{split}
\allnat{x,y} \mathcal{V}(x) \wedge \mathcal{V}(y) \to (x \simeq y \leftrightarrow &(\allin{a}{EL(x)} \somein{b}{EL(y)} a \simeq b) \\
\wedge & (\allin{b}{EL(y)} \somein{a}{EL(x)} b \simeq a))
\end{split}\label{eq-sim}\end{align}
This assumption is subsumed in section 5. For the membership relation, it will be more convenient not to take it as primitive. Rather, there will be two types of primitive quantifiers, bounded quantifiers over a v-set, interpreted as ranging over $EL(x)$, and unbounded quantifiers, interpreted as ranging over $\mathbb{V}$. $x \in y$ is defined as $\somein{x'}{y} x \simeq x'$. With these, the formal interpretation of set theory is complete. 

\begin{thm}[Extensionality]\label{thm-g-ext} If two v-sets have equal members, they are equal.\end{thm}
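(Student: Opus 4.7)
The approach is to reduce the statement to one application of the fixpoint assumption \eqref{eq-sim} together with the definition of $\in$. The natural reading of the hypothesis is $\allset{z} z \in x \leftrightarrow z \in y$, where $z \in y$ unfolds to $\somein{z'}{y} z \simeq z'$ with the bounded quantifier ranging over $EL(y)$. This pattern is structurally almost identical to the two conjuncts of \eqref{eq-sim}. The only missing ingredient is that in order to feed an intensional element $a \in EL(x)$ into the hypothesis one needs $a \in x$, which in turn requires $a \simeq a'$ for some $a' \in EL(x)$; the obvious choice $a' = a$ calls for reflexivity of $\simeq$.

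I would therefore first establish, as an auxiliary lemma, that $\allnat{x} \mathcal{V}(x) \to x \simeq x$. Apply the elimination schema \eqref{eq-ve} with $\phi(x)$ taken to be $x \simeq x$. The inductive hypothesis delivers $\mathcal{T}(\cd{x}{n}) \to \el{x}{n} \simeq \el{x}{n}$ for every $n$. To conclude $x \simeq x$ via \eqref{eq-sim}, both conjuncts amount to: for every $a \in EL(x)$, exhibit some $b \in EL(x)$ with $a \simeq b$; take $b = a$, justified by the inductive hypothesis.

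With reflexivity in hand, the theorem follows by unfolding. Given v-sets $x, y$ satisfying the hypothesis, to verify the first conjunct of \eqref{eq-sim} fix $a \in EL(x)$, i.e.\ $a = \el{x}{n}$ for some $n$ with $\mathcal{T}(\cd{x}{n})$. Clause \eqref{eq-vi} applied at $x$ forces $\mathcal{V}(a)$, so by the lemma $a \simeq a$; hence $a \in x$, witnessed by $a$ itself inside $EL(x)$. The hypothesis then yields $a \in y$, which by the definition of $\in$ supplies some $b \in EL(y)$ with $a \simeq b$. The second conjunct is symmetric.

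I do not foresee a serious obstacle: the theorem is essentially a single unfolding once reflexivity is available, and reflexivity in turn is a routine induction via \eqref{eq-ve}. The main thing to keep straight is the distinction between intensional membership in $EL(x)$, which does not respect $\simeq$, and the genuine set-theoretic $\in$; it is precisely the reflexivity lemma that bridges the two at the element level.
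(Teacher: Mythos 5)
Your proof is correct in substance, but it takes a longer route than the paper, which treats the theorem as an immediate restatement of \eqref{eq-sim}: since the bounded quantifier $\allin{a}{x}$ is interpreted as ranging over $EL(x)$ and $a \in y$ is by definition $\somein{b}{EL(y)}\, a \simeq b$, the right-hand side of \eqref{eq-sim} \emph{is} literally the formal rendering of ``$x$ and $y$ have equal members,'' namely $(\allin{a}{x} a \in y) \wedge (\allin{b}{y} b \in x)$, and there is nothing left to prove. You instead read the hypothesis in the unbounded form $\allset{z}\, z \in x \leftrightarrow z \in y$, which forces you to convert an intensional element $a \in EL(x)$ into a genuine member $a \in x$ before the hypothesis can be applied, and that is exactly where your reflexivity lemma earns its keep. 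This buys you a proof that is robust under the ``usual'' first-order formulation of Extensionality (the equivalence of the two formulations is only established later, in the paper's Lemma on bounded quantification), at the cost of front-loading reflexivity, which the paper defers to its own later lemma. One technical caveat: you cannot apply \eqref{eq-ve} directly with $\phi(x) := x \simeq x$, because the antecedent of \eqref{eq-ve} quantifies over \emph{all} numbers $x$, and \eqref{eq-sim} is only available under $\mathcal{V}(x)$, which does not follow from the inductive hypothesis you would have (it gives $\mathcal{T}(\cd{x}{n}) \to \el{x}{n} \simeq \el{x}{n}$, not $\mathcal{T}(\cd{x}{n}) \to \mathcal{V}(\el{x}{n})$). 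The standard repair is to relativise, taking $\phi'(x) := \mathcal{V}(x) \to x \simeq x$ and using the fact that elements of a v-set are v-sets; this is precisely the manoeuvre the paper packages into its proof of Set Induction, which is why its own reflexivity lemma simply invokes Set Induction rather than \eqref{eq-ve} raw.
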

\begin{proof} Given the above interpretation, \eqref{eq-sim} is in fact: 
$$\allset{x,y} x \simeq y \leftrightarrow (\allin{a}{x} a \in y) \wedge (\allin{b}{y} b \in x)$$
\end{proof}

\begin{thm}[Set Induction]\label{thm-g-ind} If a predicate applies to a v-set whenever it applies to all its members, then the predicate applies to all v-sets.\end{thm}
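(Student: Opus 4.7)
The plan is to apply the induction schema \eqref{eq-ve} with a suitably strengthened predicate. Let $\psi$ be a predicate such that $\allset{x} (\allin{a}{x} \psi(a)) \to \psi(x)$; the goal is to derive $\allset{x} \psi(x)$. A naive application of \eqref{eq-ve} directly to $\psi$ does not work: the scheme delivers an inductive hypothesis of the form $\mathcal{T}(\cd{x}{n}) \to \psi(\el{x}{n})$, but nothing about $\el{x}{n}$ being a v-set, whereas the hypothesis on $\psi$ is only given for v-set arguments. The remedy is to induct on the conjunction $\phi(x) := \mathcal{V}(x) \wedge \psi(x)$, so that v-set-hood is carried along the induction.

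To verify the premise of \eqref{eq-ve} for this $\phi$, I would fix $x$ and assume $\allnat{n} \defd{xn} \wedge \mathcal{M}(\cd{x}{n}) \wedge (\mathcal{T}(\cd{x}{n}) \to \mathcal{V}(\el{x}{n}) \wedge \psi(\el{x}{n}))$. The first conjunct of $\phi(x)$, namely $\mathcal{V}(x)$, is then immediate from \eqref{eq-vi}. For the second conjunct, observe that the interpretation of the bounded quantifier as ranging over $EL(x)$ turns $\allin{a}{x} \psi(a)$ into exactly $\allnat{n}. \mathcal{T}(\cd{x}{n}) \to \psi(\el{x}{n})$, which is available from the assumption. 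Since $\mathcal{V}(x)$ has just been established, the hypothesis on $\psi$ applies and yields $\psi(x)$. Thus $\phi(x)$, and \eqref{eq-ve} gives $\allnat{x} \mathcal{V}(x) \to \phi(x)$; projecting on the second conjunct is the desired $\allset{x} \psi(x)$.

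The only real obstacle is the mismatch just noted: the inductive hypothesis provided by \eqref{eq-ve} quantifies over intensional members $\el{x}{n}$ subject to $\mathcal{T}(\cd{x}{n})$, and knows nothing about their being v-sets, while the hypothesis of Set Induction is stated only about v-set members. Packaging $\mathcal{V}$ into the predicate being inducted on is precisely what bridges the gap; after that, the proof is a direct unwinding of the definitions of the bounded quantifier and of $EL$.
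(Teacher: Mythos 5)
Your proof is correct and is essentially the paper's argument: both work by bundling $\mathcal{V}$ into the predicate fed to \eqref{eq-ve} so that v-set-hood of the intensional members is available inductively. The only (immaterial) difference is that you induct on the conjunction $\mathcal{V}(x)\wedge\psi(x)$ and recover $\mathcal{V}(x)$ via \eqref{eq-vi} inside the induction step, whereas the paper inducts on the implication $\mathcal{V}(x)\to\phi(x)$ and instead invokes the fact that the elements of a v-set are themselves v-sets.
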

\begin{proof} The inductive definition of $\mathcal{V}$ was chosen to make it so. For, if $\phi$ applies to a v-set whenever it applies to all its members, then:
$$\allset{x} (\allin{y}{x} \phi(y)) \to \phi(x)$$
which means,
$$\allnat{x} \mathcal{V}(x) \to (\allin{y}{EL(x)} \phi(y)) \to \phi(x)$$
Define the predicate $\phi'$ as:
$$\phi'(x) \iff \mathcal{V}(x) \to \phi(x)$$
Note that the elements of a v-set are themselves v-sets. So:
$$\allnat{x} (\allin{y}{EL(x)} \phi'(y)) \to \phi'(x)$$
which expands to
$$\allnat{x} (\allnat{n} \mathcal{T}(\cd{x}{n}) \to \phi'(\el{x}{n})) \to \phi'(x)$$
Furthermore $\mathcal{V}(x) \to (\allnat{n} \defd{xn} \wedge \mathcal{M}(\cd{x}{n}))$, so,
$$\allnat{x} (\allnat{n} \defd{xn} \wedge \mathcal{M}(\cd{x}{n}) \wedge (\mathcal{T}(\cd{x}{n}) \to \phi'(\el{x}{n}))) \to \phi'(x)$$
This is the form to which \eqref{eq-ve} applies, so,
$$\allnat{x} \mathcal{V}(x) \to \phi'(x)$$
So finally,
$$\allset{x} \phi(x)$$
\end{proof}
\begin{rem} This is the only place where the full strength of assumption \eqref{eq-ve} is used. By restricting the predicates to which induction applies in \eqref{eq-ve}, it may therefore be possible to obtain a version of this interpretation which validates \formalsys{CZF} with a correspondingly restricted form of Set Induction, and this would have a predicative justification in the stricter sense of Sch\"utte and Feferman \cite{SF}. \end{rem}

\begin{lem}Extensional equality is reflexive, symmetric, and transitive.\end{lem}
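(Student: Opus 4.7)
The plan is to treat each of the three properties separately, in the order reflexive, symmetric, transitive, using Set Induction (Theorem~\ref{thm-g-ind}) and the characterisation \eqref{eq-sim}.

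For reflexivity, I would apply Set Induction to the predicate $\phi(x):= x\simeq x$. The hypothesis then gives $a\simeq a$ for every $a\in EL(x)$, and to derive $x\simeq x$ via \eqref{eq-sim} it suffices, in each of the two conjuncts, to witness the inner existential by the outer $a$ itself. Set Induction yields $\allset{x} x\simeq x$.

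For symmetry, no induction is needed at all. Unfolding \eqref{eq-sim} for $x\simeq y$ and for $y\simeq x$, the two conjuncts of each match those of the other after relabelling bound variables: the ``forward'' conjunct $\allin{a}{EL(x)}\somein{b}{EL(y)} a\simeq b$ of $x\simeq y$ is literally the ``backward'' conjunct of $y\simeq x$, and vice versa. Hence $x\simeq y\leftrightarrow y\simeq x$ drops out of the definition, with no recursive use of symmetry on the members.

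For transitivity, I would apply Set Induction to $\phi(x):=\allset{y,z}\; x\simeq y\wedge y\simeq z\to x\simeq z$. Assuming $x\simeq y$ and $y\simeq z$, the forward conjunct of $x\simeq z$ is a straightforward element chase: given $a\in EL(x)$, the forward conjuncts of $x\simeq y$ and $y\simeq z$ produce $b\in EL(y)$ and $c\in EL(z)$ with $a\simeq b$ and $b\simeq c$, and the induction hypothesis applied at $a$ gives $a\simeq c$. The backward conjunct of $x\simeq z$ chases in the opposite direction: given $c\in EL(z)$, I obtain $b\in EL(y)$ and $a\in EL(x)$ with $c\simeq b$ and $b\simeq a$; invoking the already-proven symmetry, these become $a\simeq b$ and $b\simeq c$, the IH at $a$ gives $a\simeq c$, and symmetry once more gives $c\simeq a$.

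The main obstacle is this last step: the naive backward chase delivers the two recursive equivalences in the wrong orientation for the transitivity IH, which is stated with $a\in EL(x)$ on the left. Rerouting through symmetry fixes this, but it is the reason the three properties must be proved in this order rather than packaged together.
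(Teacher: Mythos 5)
Your proposal is correct and follows essentially the same route as the paper: symmetry read off directly from \eqref{eq-sim}, reflexivity and transitivity by Set Induction with the element-chasing arguments you describe. The only (harmless) variation is that for transitivity the paper uses a double Set Induction on $x$ and $z$ with the hypothesis ``$a\simeq b\wedge b\simeq c\to a\simeq c$ for $a$ in $x$, $c$ in $z$,'' whereas you use a single induction on $x$ with $y,z$ universally quantified in the induction predicate and reroute the backward chase through symmetry --- a step the paper glosses over but which you handle explicitly.
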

\begin{proof} Symmetry is immediate from the fix-point definition of $\simeq$. Reflexivity and transitivity are shown by Set Induction. Suppose, for an inductive hypothesis, that each element of a v-set is equal to itself. It then follows from the fix-point definition of $\simeq$ that the v-set is equal to itself. Given v-sets $x$ and $z$, suppose, for a doubly inductive hypothesis, that for any $a$ in $x$ and $c$ in $z$, if there is a $b$ such that $a \simeq b \wedge b \simeq c$, then $a \simeq c$. If there is a $y$ such that $x \simeq y \wedge y \simeq z$ then for any $a$ in $x$, there is a $b$ in $y$ such that $a \simeq b$, and for that $b$ there is a $c$ in $z$ such that $b \simeq c$, so by hypothesis $a \simeq c$. Conversely for every $c$ in $z$ there is an $a$ in $x$ such that $c \simeq a$. So, $x \simeq z$. \end{proof}

\begin{lem}Extensional equality has the substitution property for sentences of first-order set theory.\end{lem}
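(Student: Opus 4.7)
The plan is to induct on the logical structure of the formula. Fix $\phi(\vec z)$ with free variables among $\vec z = z_1,\ldots,z_k$; the statement to be established by induction is that whenever $\vec x, \vec x'$ are v-sets with $x_i \simeq x_i'$ for every $i$, then $\phi(\vec x) \leftrightarrow \phi(\vec x')$. Under the interpretation fixed in Section~2, the primitive formulas are the atomic formula $a \simeq b$, the propositional connectives, and the bounded and unbounded quantifiers; membership $a \in b$ is only a defined abbreviation for $\somein{a'}{b} a \simeq a'$, so it requires no separate case.

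The atomic case $z_i \simeq z_j$ is immediate from the symmetry and transitivity of $\simeq$ proved in the previous lemma. The propositional cases follow mechanically from the inductive hypothesis. An unbounded quantifier $\allset{z}\psi$ or $\someset{z}\psi$ ranges over $\mathbb{V}$ independently of the parameters, so the claim reduces to applying the inductive hypothesis to $\psi$ for each fixed $z$.

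The principal case is the bounded quantifier. For $\allin{a}{z_i}\psi(a,\vec z)$, assume $\vec x \simeq \vec x'$ componentwise and $\allin{a}{x_i}\psi(a,\vec x)$. Given any $b \in EL(x_i')$, the fix-point definition \eqref{eq-sim} supplies some $a \in EL(x_i)$ with $a \simeq b$. Since the elements of any v-set are themselves v-sets (as already noted in the proof of Theorem~\ref{thm-g-ind}), the inductive hypothesis applies to $\psi$: using $a \simeq b$ together with $\vec x \simeq \vec x'$, it transfers $\psi(a,\vec x)$ to $\psi(b,\vec x')$, giving $\allin{b}{x_i'}\psi(b,\vec x')$. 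The existential bounded case is dual, and the reverse implications use the symmetric halves of \eqref{eq-sim} and of the inductive hypothesis.

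The main obstacle is this bounded-quantifier case, where the fix-point \eqref{eq-sim} and the closure of v-sets under their intensional members both come into play; everything else reduces directly to properties of $\simeq$ already established. A small bookkeeping point to flag is that membership $a \in b$ admits substitution in both positions, but both are absorbed once $\in$ is unfolded: the right-hand side falls under the bounded existential case, while the left-hand side reduces to the atomic $\simeq$ case.
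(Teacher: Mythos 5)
Your proposal is correct and follows essentially the same route as the paper: induction on the structure of the formula, with the atomic case discharged by the equivalence-relation lemma, the connectives and unbounded quantifiers handled trivially, and the bounded-quantifier case resolved by combining the fix-point characterisation \eqref{eq-sim} of $\simeq$ with the inductive hypothesis and the fact that intensional members of a v-set are v-sets. The only difference is presentational: you track a tuple of parameters explicitly and note that $\in$ needs no separate case, whereas the paper isolates the single substitution in the quantifier bound; the substance is identical.
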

\begin{proof} Proceed by induction on the structure of the formula. The base case is equality between two v-sets, and this is given by the previous lemma. For the propositional connectives and unbounded quantifiers it follows from intuitionistic predicate logic. What has to be considered is whether bounded quantification over extensionally equivalent domains is equivalent. So, given $x \simeq y$, take for inductive hypothesis that
$$\allin{u}{x} \allin{v}{y} u \simeq v \to (\phi(x,u) \leftrightarrow \phi(y,v))$$
It must be shown that 
$$(\somein{u}{x} \phi(x,u)) \leftrightarrow (\somein{v}{y} \phi(y,v))$$
and that
$$(\allin{u}{x} \phi(x,u)) \leftrightarrow (\allin{v}{y} \phi(y,v))$$
So, for the existential quantifier, suppose $\somein{u}{EL(x)} \phi(x,u)$. Now $x \simeq y$ so $\allin{u}{EL(x)}\somein{v}{EL(y)} u \simeq v$, so using the inductive hypothesis, it follows that $\somein{v}{EL(y)} \phi(y,v)$ as required. The other direction works the same way. For the universal quantifier, suppose $\allin{u}{EL(x)} \phi(x,u)$. Now $x \simeq y$, so $\allin{v}{EL(y)}\somein{u}{EL(x)} v \simeq u$, so again using the inductive hypothesis, it follows that $\allin{v}{EL(y)} \phi(y,v)$, as required, and again the other direction works the same way. \end{proof}

\begin{lem}Over formulas of first-order set theory, bounded quantification is equivalent to its usual definition in terms of unbounded quantification and the membership relation.\end{lem}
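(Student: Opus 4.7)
The plan is to unfold the definitions and then reduce everything to the two preceding lemmas (reflexivity and substitution of $\simeq$). What has to be established, for a v-set $x$ and a formula $\phi$ of first-order set theory, is
\begin{align*}
(\allin{u}{x} \phi(u)) &\leftrightarrow (\allset{u} u \in x \to \phi(u)),\\
(\somein{u}{x} \phi(u)) &\leftrightarrow (\someset{u} u \in x \wedge \phi(u)),
\end{align*}
where on the left the bounded quantifier is interpreted as ranging over $EL(x)$, on the right the unbounded quantifier is interpreted as ranging over $\mathbb{V}$, and $u \in x$ unfolds to $\somein{u'}{x} u \simeq u'$.

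For the universal case, the easy direction is right-to-left: given $\allset{u} u \in x \to \phi(u)$, take any $u' \in EL(x)$. Since elements of a v-set are v-sets, $u'$ is in $\mathbb{V}$, and by reflexivity $u' \simeq u'$, so $u' \in x$, and so $\phi(u')$. For the other direction, assume $\allin{u'}{x} \phi(u')$ and let $u$ be an arbitrary v-set with $u \in x$. Unfolding, there is some $u' \in EL(x)$ with $u \simeq u'$. By assumption $\phi(u')$, and by the substitution lemma applied to $u \simeq u'$ we get $\phi(u)$.

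The existential case is symmetric. For left-to-right, if some $u' \in EL(x)$ satisfies $\phi(u')$, then $u'$ is a v-set, $u' \simeq u'$ gives $u' \in x$, and $\phi(u')$ holds directly, witnessing the unbounded existential. For right-to-left, if some v-set $u$ satisfies $u \in x \wedge \phi(u)$, unfolding $u \in x$ gives an $u' \in EL(x)$ with $u \simeq u'$; substitution then transports $\phi(u)$ to $\phi(u')$, which witnesses the bounded existential.

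There is no real obstacle here; the content is just bookkeeping about the interpretation. The only nontrivial ingredient is appealing to the substitution property of $\simeq$ (which needs $\phi$ to be a formula of first-order set theory, as stated), and implicitly the fact that $EL(x) \subseteq \mathbb{V}$ whenever $\mathcal{V}(x)$, which follows directly from \eqref{eq-vi} together with an instance of induction \eqref{eq-ve} showing $\allnat{x} \mathcal{V}(x) \to \allin{y}{EL(x)} \mathcal{V}(y)$.
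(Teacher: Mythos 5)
Your proof is correct and follows essentially the same route as the paper's: unfold the bounded quantifiers as ranging over $EL(x)$ and the membership relation as $\somein{u'}{EL(x)} u \simeq u'$, then close the gap using reflexivity in one direction and the substitution lemma (for formulas of first-order set theory) in the other, together with the fact that elements of a v-set are v-sets. The paper states this only as a sketch; you have simply filled in the bookkeeping.
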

\begin{proof} Usually, they are defined as\begin{align*}
(\somein{y}{x} \phi(y)) &\iff (\someset{y} (y \in x) \wedge \phi(y)) \\
(\allin{y}{x} \phi(y)) &\iff (\allset{y} (y \in x) \to \phi(y))
\end{align*}
Whereas in this interpretation, the bounded quantifiers range over $EL$, and $y \in x$ means $\somein{y'}{EL(x)} y \simeq y'$. So the existential case becomes $\somein{y}{EL(x)} \phi(y)$ on the left-hand side and $\someset{y} \somein{y'}{EL(x)} y \simeq y' \wedge \phi(y)$. As noted above, $EL$ does not respect extensionality. Nevertheless, in the case where $\phi$ is a formula of first-order set theory, the previous lemma applies, together with the fact that the elements of a v-set are v-sets, to show the equivalence. Similar considerations apply to the universal case. \end{proof}

\begin{thm}\label{thm-g-pred} If $\mathbb{V}$ is taken as the domain of discourse, extensional equality is taken as the equality relation, and the membership relation is taken as the only other predicate, then intuitionistic predicate logic with equality is valid. \end{thm}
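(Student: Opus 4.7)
The plan is to verify, rule by rule, that every axiom and inference scheme of intuitionistic first-order predicate logic with equality is valid under the stated interpretation, in which the domain is $\mathbb{V}$, the equality symbol is interpreted as $\simeq$, and the only nonlogical predicate is $\in$. I would organise the check into three blocks, each served by one of the three preceding lemmas, plus a little routine bookkeeping for the purely logical rules.

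First, for the propositional connectives $\wedge$, $\vee$, $\to$, $\bot$: these are interpreted as themselves at the meta-level, so the intuitionistic propositional rules transfer immediately, since the ambient reasoning is itself intuitionistic. For unbounded quantifiers, the interpretation is the standard relativization to the subclass $\mathbb{V}$ defined by $\mathcal{V}$; the $\forall$- and $\exists$-introduction/elimination rules survive this relativization in the usual way, provided each bound variable is witnessed to range over $\mathbb{V}$, and provided $\mathbb{V}$ is inhabited (which is assured once $\empty$ is constructed as a v-set in section 3). Bounded quantifiers need separate treatment because they are taken as primitive here and range over $EL(x)$, which does not respect extensionality; but the lemma immediately preceding this theorem shows that, over first-order set-theoretic formulas, bounded quantification coincides with its usual definition via $\in$ and unbounded quantification, so the bounded-quantifier rules are forced to agree with the unbounded ones.

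For the equality axioms: reflexivity, symmetry, and transitivity of $\simeq$ are supplied by the first of the three preceding lemmas, and the Leibniz substitution rule $u \simeq v \to (\phi(u) \leftrightarrow \phi(v))$ for arbitrary first-order set-theoretic $\phi$ is exactly the content of the second.

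I do not expect a real obstacle: the three preceding lemmas were calibrated precisely so that this theorem becomes a compilation. The main care required is simply to enumerate a complete axiomatisation of intuitionistic predicate logic with equality and to tag each axiom or rule with one of the following: soundness of the meta-logic, the standard relativization argument for unbounded quantifiers, the bounded-quantification lemma, the equivalence-relation lemma, or the substitutivity lemma.
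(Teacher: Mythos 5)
Your proposal is correct and matches the paper's own argument: the propositional connectives are interpreted as themselves, the unbounded quantifiers are the standard relativisation to the inhabited class $\mathbb{V}$, and the three preceding lemmas supply exactly the equivalence-relation, substitution, and bounded-quantifier facts needed. Your explicit note that inhabitedness of $\mathbb{V}$ is only secured once $\empty$ is constructed is a small point the paper glosses over, but it does not change the argument.
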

\begin{proof} Intuitionistic predicate logic is valid for number theory. Extensional equality has the correct properties for equality. The propositional connectives are interpreted as themselves. The unbounded quantifiers are simply quantifying over $\mathbb{N}$ and relativised to the inhabited predicate $\mathcal{V}$. Finally, the bounded quantifiers are equivalent to their usual definitions. \end{proof}

\section{Basic constructions}

To proceed, some temporary, trivial assumptions about $\mathcal{M}$ and $\mathcal{T}$ are made. These assumptions are subsumed in section 5. 

\begin{defn}\label{def-g-pi} $\top$ is a Meaningful condition which is True, and $\bot$ is a Meaningful condition which is not True. $\Pi(p,q)$ is a condition which asserts that both $p$ and $q$ are True; it is Meaningful if $p$ is a Meaningful condition, and if $p$ being True would imply that $q$ is a Meaningful condition. \end{defn}

\begin{thm}[Empty Set] \label{thm-g-empty} There is a v-set with no elements.\end{thm}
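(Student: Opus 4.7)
The plan is to exhibit an explicit v-set whose condition sequence is constantly $\bot$, so that $IND$ and $EL$ are empty, and then check it satisfies the v-set introduction rule \eqref{eq-vi}.

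First I would define $x = \Lambda n.\tuple{0,\bot}$. Then $\defd{xn}$ holds for every $n$ (the defining lambda expression is total), and by the projection conventions $\el{x}{n} = 0$ and $\cd{x}{n} = \bot$. By Definition \ref{def-g-pi}, $\bot$ is a Meaningful condition, so $\mathcal{M}(\cd{x}{n})$ holds for every $n$. Also by Definition \ref{def-g-pi}, $\bot$ is not True, so $\mathcal{T}(\cd{x}{n})$ is false and the implication $\mathcal{T}(\cd{x}{n}) \to \mathcal{V}(\el{x}{n})$ holds vacuously. Applying \eqref{eq-vi} yields $\mathcal{V}(x)$, so $x$ is a v-set.

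Next I would verify that $x$ has no elements in the set-theoretic sense. Suppose some $y$ satisfied $y \in x$, which unfolds to $\somein{y'}{EL(x)} y \simeq y'$, i.e., $\somenat{n} \mathcal{T}(\cd{x}{n}) \wedge y \simeq \el{x}{n}$. But $\cd{x}{n} = \bot$ for every $n$, and $\bot$ is not True, so no such $n$ exists. Hence $x$ has no elements.

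There is essentially no obstacle here: the trivial assumption on $\bot$ in Definition \ref{def-g-pi} was tailor-made for exactly this construction, and the inductive clause \eqref{eq-vi} is discharged by a vacuous implication. The only minor care needed is to distinguish the intensional content $\el{x}{n}$ (which happens to be $0$, irrelevant since the condition is false) from the extensional membership relation $\in$, and to note that the elements of $x$ are defined via $EL(x)$, which is empty.
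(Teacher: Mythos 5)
Your proposal is correct and is essentially the paper's own proof: the paper defines $\empty = \Lambda n.\tuple{z,\bot}$ for an irrelevant constant $z$ and leaves the verification implicit, whereas you take $z = 0$ and spell out the application of \eqref{eq-vi} and the emptiness of $EL$. No difference in substance.
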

\begin{proof} It is given by:
$$ \empty = \Lambda n.\tuple{z,\bot} $$
$z$ is an irrelevant constant. \end{proof}

\begin{thm}[Pairing] \label{thm-g-pair} For any two v-sets, there is a v-set with those two v-sets as elements.\end{thm}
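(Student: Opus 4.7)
The plan is to exhibit an explicit numerical code for the desired v-set using the same style as the Empty Set construction. Given two v-sets $a$ and $b$, define
$$ \{a,b\} \;=\; \Lambda n.\; \text{if } n=0 \text{ then } \tuple{a,\top} \text{ else if } n=1 \text{ then } \tuple{b,\top} \text{ else } \tuple{z,\bot} $$
where $z$ is an arbitrary constant. This is patently a total computable function, so each application $\{a,b\}n$ is defined, and the projections yield $\cd{\{a,b\}}{0}=\top$, $\cd{\{a,b\}}{1}=\top$, $\cd{\{a,b\}}{n}=\bot$ for $n\geq 2$, while $\el{\{a,b\}}{0}=a$ and $\el{\{a,b\}}{1}=b$.

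Next I would verify the premises of the inductive clause \eqref{eq-vi}. Meaningfulness of every condition is immediate from Definition \ref{def-g-pi}, since both $\top$ and $\bot$ are Meaningful. When $\cd{\{a,b\}}{n}$ is True, the corresponding element $\el{\{a,b\}}{n}$ must be a v-set: for $n=0,1$ this holds by the assumption that $a$ and $b$ are v-sets, and for $n\geq 2$ the condition $\bot$ is not True so the implication is vacuous. Thus \eqref{eq-vi} gives $\mathcal{V}(\{a,b\})$.

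Finally, by definition of $IND$ and $EL$, we have $IND(\{a,b\}) = \{0,1\}$ and $EL(\{a,b\}) = \{a,b\}$, so the two intensional members of $\{a,b\}$ are exactly $a$ and $b$, which by reflexivity of $\simeq$ yields $a \in \{a,b\}$ and $b \in \{a,b\}$. There is no real obstacle here; the only point that requires care is recalling that Meaningfulness of $\bot$ (not just its falsity) is what licenses the use of padding entries, which is exactly why Definition \ref{def-g-pi} asserts that $\bot$ is a Meaningful condition.
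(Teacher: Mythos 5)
Your proof is correct and essentially the same as the paper's: an explicit total computable code verified against \eqref{eq-vi}. The only cosmetic difference is that the paper indexes by parity (even $\mapsto \tuple{x,\top}$, odd $\mapsto \tuple{y,\top}$) instead of padding indices $n\geq 2$ with $\tuple{z,\bot}$; both devices work, and your extra verification of $IND$ and $EL$ is a harmless elaboration.
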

\begin{proof} If $x$ and $y$ are sets, their pairing can be given by: 
$$\{x,y\} = \Lambda n.\begin{cases}
  \tuple{x,\top}, & \text{if } n \text{ is even} \\ 
  \tuple{y,\top}, & \text{if } n \text{ is odd }
\end{cases}$$
\end{proof}

\begin{lem}\label{lem-g-part} Definition \ref{def-g-set} extends to computable partial sequences, as follows: suppose $x$ is a computable partial sequence such that, for all $n$, if $xn$ is defined then $\cd{x}{n}$ is a Meaningful condition, and if furthermore $\cd{x}{n}$ is True then $\el{x}{n}$ is a v-set. Then $\mathbf{D} x = \setbn{n}{\el{x}{n}}{\defd{xn} \wedge (\cd{x}{n} \text{ is True})}$ is a v-set.\end{lem}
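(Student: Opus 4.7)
The plan is to reduce this ``partial sequence'' case to the total case handled by Definition \ref{def-g-set}, by re-indexing through computation witnesses. The main obstacle is precisely that \eqref{eq-vi} demands a \emph{total} computable sequence of Meaningful conditions, whereas we are only given a partial one, and definedness of $xn$ is not in general decidable from $x$ and $n$ alone. The remedy is Kleene's $\mathbf{T}$ predicate, which \emph{is} decidable: at each index $m = \tuple{n,o}$ we can simply test whether $o$ happens to witness termination of $xn$.

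Concretely, I would define
\[
\mathbf{D}x \;=\; \Lambda m.\; \text{if } \mathbf{T}(x,(m)_L,(m)_R) \text{ then } x(m)_L \text{ else } \tuple{z,\bot},
\]
which is total and computable in $m$. At $m = \tuple{n,o}$, either $o$ certifies termination of $xn$ --- in which case the emitted pair is $xn$ itself, with condition $\cd{x}{n}$ and element $\el{x}{n}$ --- or it does not, in which case we emit the harmless $\tuple{z,\bot}$ of Theorem \ref{thm-g-empty}.

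The verification is then routine. The condition $\cd{\mathbf{D}x}{m}$ is Meaningful in either branch: $\bot$ by Definition \ref{def-g-pi}, and $\cd{x}{n}$ (with $xn$ defined) by hypothesis on $x$. If $\cd{\mathbf{D}x}{m}$ is True, it cannot be $\bot$, so we are in the first branch, whence $\cd{x}{n}$ is True and $\el{\mathbf{D}x}{m} = \el{x}{n}$ is a v-set, again by hypothesis on $x$. Thus \eqref{eq-vi} applies and $\mathcal{V}(\mathbf{D}x)$ follows.

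Finally, to justify identifying this construction with $\setbn{n}{\el{x}{n}}{\defd{xn} \wedge (\cd{x}{n} \text{ is True})}$, I would observe that the True indices $m$ of $\mathbf{D}x$ are exactly the $\tuple{n,o}$ for which $o$ witnesses $\defd{xn}$ and $\cd{x}{n}$ is True, with element $\el{x}{n}$ at such $m$. Hence $EL(\mathbf{D}x)$ enumerates precisely the intended elements, at worst with intensional duplication from multiple witnesses $o$ for the same $n$, which is extensionally invisible just like any other intensional redundancy in a v-set.
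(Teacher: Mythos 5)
Your construction is essentially the paper's own: the paper likewise re-indexes over pairs $\tuple{n,u}$ and uses the decidability of Kleene's $\mathbf{T}$ to emit the condition $\cd{x}{n}$ (recovered as $\mathbf{U}(u)_R$) when $u$ witnesses termination and $\bot$ otherwise, then verifies the same extensional identification. The only cosmetic difference is that the paper always places $\mathbf{U}(n_R)_L$ in the element slot rather than switching to a dummy pair in the failing branch, which changes nothing since the condition there is $\bot$.
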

\begin{proof} Define:\begin{align*}
T_0(x,n,u,p) &= \begin{cases} p & \text{if } \mathbf{T}(x,n,u) \\ \bot & \text{otherwise}\end{cases} \\
\mathbf{D}x &= \Lambda n.\tuple{\mathbf{U}(n_R)_L, T_0(x,n_L,n_R,\mathbf{U}(n_R)_R)}
\end{align*}
This will meet the requirements. For, \begin{align*}
\mathbf{D}x &\simeq \setbn{n}{\mathbf{U}(n_R)_L}{\mathcal{T}(T_0(x,n_L,n_R,\mathbf{U}(n_R)_R))}
\\&\simeq \setbn{n,u}{\mathbf{U}(u)_L}{\mathcal{T}(T_0(x,n,u,\mathbf{U}(u)_R))}
\\&\simeq \setbn{n,u}{\mathbf{U}(u)_L}{\mathbf{T}(x,n,u) \wedge \mathcal{T}(\mathbf{U}(u)_R)}
\\&\simeq \setbn{n}{(xn)_L}{\defd{xn} \wedge \mathcal{T}((xn)_R)}
\\&\simeq \setbn{n}{\el{x}{n}}{\defd{xn} \wedge \mathcal{T}(\cd{x}{n})}
\end{align*}
\end{proof}

\begin{thm}[Union] \label{thm-g-un} Given a v-set $x$, there is a v-set whose elements are the elements of the elements of $x$.\end{thm}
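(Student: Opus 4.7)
The plan is to form the union of a v-set $x$ by reindexing a double enumeration: for each pair $\tuple{n,m}$, record the element $\el{\el{x}{n}}{m}$ conditioned on the conjunction that $\cd{x}{n}$ and $\cd{\el{x}{n}}{m}$ are both True. The conjunction condition $\Pi$ from Definition \ref{def-g-pi} is tailor-made for this, since its Meaningfulness clause requires only that the second argument become Meaningful \emph{once} the first is True — which matches the way inner v-set conditions become available only after we know the outer indexed element is itself a v-set.

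Concretely, I would define
$$\textstyle\bigcup x \;=\; \Lambda k.\tuple{\el{\el{x}{k_L}}{k_R},\;\Pi(\cd{x}{k_L},\cd{\el{x}{k_L}}{k_R})}.$$
This is in general only a partial computable sequence: when $\cd{x}{n}$ is not True, the ``element'' $\el{x}{n}$ need not be a v-set, so the inner applications $\cd{\el{x}{n}}{m}$ and $\el{\el{x}{n}}{m}$ may diverge. Hence the natural tool is Lemma \ref{lem-g-part} rather than the raw inductive clause \eqref{eq-vi}.

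The verification then reduces to checking the hypotheses of Lemma \ref{lem-g-part}. Suppose the sequence is defined at $\tuple{n,m}$. Meaningfulness of the $\Pi$-condition follows because $\cd{x}{n}$ is Meaningful (since $x$ is a v-set, by \eqref{eq-vi}), and if $\cd{x}{n}$ is True then $\el{x}{n}$ is a v-set, so $\cd{\el{x}{n}}{m}$ is in turn Meaningful. If the $\Pi$-condition is True, both conjuncts are True, so applying the v-set property twice shows $\el{\el{x}{n}}{m}$ is a v-set. Lemma \ref{lem-g-part} then delivers a v-set whose intensional elements are exactly the $\el{\el{x}{n}}{m}$ with both $\cd{x}{n}$ and $\cd{\el{x}{n}}{m}$ True — i.e., the members of members of $x$, up to extensional equality.

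The main obstacle is not any mathematical subtlety but the bookkeeping: one has to be careful that the inner $\cd{\el{x}{n}}{m}$ is not evaluated, or even required to be Meaningful, until the outer condition $\cd{x}{n}$ holds. This is exactly what the non-symmetric Meaningfulness clause of $\Pi$ in Definition \ref{def-g-pi} guarantees, and what prevents this step from going through with a naive symmetric conjunction. Once Lemma \ref{lem-g-part} is invoked, extensional correctness is a routine calculation analogous to the chain of $\simeq$'s in its proof.
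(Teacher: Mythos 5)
Your proposal is correct and is essentially the paper's own proof: the same reindexed partial sequence $\Lambda k.\tuple{\el{\el{x}{k_L}}{k_R},\Pi(\cd{x}{k_L},\cd{\el{x}{k_L}}{k_R})}$ fed through Lemma \ref{lem-g-part} (i.e.\ wrapped in $\mathbf{D}$, which your displayed formula omits but your prose supplies), with the same appeal to the asymmetric Meaningfulness clause of $\Pi$ that the paper highlights in its remark. No gaps.
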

\begin{proof} The required v-set is given by:
$$\bigcup x = \mathbf{D}\Lambda n.\tuple{\el{\el{x}{n_L}}{n_R}, \Pi(\cd{x}{n_L},\cd{\el{x}{n_L}}{n_R})}$$
$y$ is an element of $\bigcup x$ if and only if there is an $n$ such that $\Pi(\cd{x}{n_L},\cd{\el{x}{n_L}}{n_R})$ is True and $y \simeq \el{\el{x}{n_L}}{n_R}$; this means there is an $n$ and an $m$ such that $\Pi(\cd{x}{n},\cd{\el{x}{n}}{m})$ is True and $y \simeq \el{\el{x}{n}}{m}$. By the Truth conditions of $\Pi$, this means that $\cd{x}{n}$ and $\cd{\el{x}{n}}{m}$ are True. The Truth of $\cd{x}{n}$ means that $\el{x}{n}$ an element of x, and the Truth of $\cd{\el{x}{n}}{m}$ means that $\el{\el{x}{n}}{m}$ is an element of $\el{x}{n}$. So, the elements of $\bigcup x$ are indeed the elements of the elements of $x$. \end{proof}
\begin{rem} The asymmetry in Definition \ref{def-g-pi} for $\Pi$ to be Meaningful is used here: $\cd{\el{x}{n}}{m}$ can only be assumed to be Meaningful when $\el{x}{n}$ is a v-set, which can only be assumed when $\cd{x}{n}$ is True. Indeed $\cd{\el{x}{n}}{m}$ can only be assumed to be defined when $\cd{x}{n}$ is True, so the Lemma \ref{lem-g-part} must apply. \end{rem}
\begin{thm}[Infinity] \label{thm-g-inf} There is an inhabited v-set such that each of its elements is also an element of one of its elements. \end{thm}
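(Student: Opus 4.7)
The plan is to build the v-set of finite von Neumann ordinals and take it as $\omega$. First I would define, by primitive recursion, a total computable function $n \mapsto \neumarral{n}$ with $\neumarral{0} = \empty$ from Theorem \ref{thm-g-empty}, and $\neumarral{n+1}$ obtained from $\neumarral{n}$ by applying the explicit computable witnesses from the proofs of Theorems \ref{thm-g-pair} and \ref{thm-g-un} to compute $\bigcup\{\neumarral{n},\{\neumarral{n},\neumarral{n}\}\} = \neumarral{n}\cup\{\neumarral{n}\}$. By induction on $n$, each $\neumarral{n}$ is a v-set whose extensional members are exactly $\neumarral{0},\dots,\neumarral{n-1}$; in particular $\neumarral{n}\in\neumarral{n+1}$ for every $n$.

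Second, I would set
$$\omega = \Lambda n.\tuple{\neumarral{n},\top}.$$
The sequence is total, each $\cd{\omega}{n} = \top$ is Meaningful and True by Definition \ref{def-g-pi}, and each $\el{\omega}{n} = \neumarral{n}$ is a v-set by the first step. Hence assumption \eqref{eq-vi} delivers $\mathcal{V}(\omega)$, and the extensional members of $\omega$ are exactly the $\neumarral{n}$.

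To finish: $\neumarral{0}$ belongs to $EL(\omega)$, so $\omega$ is inhabited; and for each $\neumarral{n}\in EL(\omega)$, the successor $\neumarral{n+1}$ is in $EL(\omega)$ and contains $\neumarral{n}$, so every element of $\omega$ is an element of one of its elements, as required.

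There is no deep obstacle here---everything reduces to the three constructions already established. The one point that requires more than inspection is verifying that the meta-recursion $n \mapsto \neumarral{n}$ packages itself as an honest Kleene index, so that $\Lambda n.\tuple{\neumarral{n},\top}$ is a genuine computable sequence in the formal sense; this is a routine closure property of the partial recursive functions (given that the cited proofs expose explicit indices for the successor operation), but it is the spot where the argument actually leans on the computability framework rather than on purely set-theoretic content.
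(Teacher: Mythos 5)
Your proof is correct, and its top level is the same as the paper's: take $\omega = \Lambda n.\tuple{\neumarral{n},\top}$, show each $\neumarral{n}$ is a v-set by ordinary induction over $\mathbb{N}$, and conclude $\mathcal{V}(\omega)$ from \eqref{eq-vi}. The difference is in how the codes $\neumarral{n}$ are obtained. The paper defines them uniformly in one stroke by general recursion, $f = \Lambda n.\Lambda m.\tuple{fm, A(m,n)}$ with $A(m,n)=\top$ iff $m<n$, so that $\neumarral{n}$ enumerates \emph{all} the $\neumarral{m}$ but attaches the condition $\bot$ to those with $m\ge n$; totality of $f$ is immediate and no earlier construction is needed. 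You instead iterate the successor $\neumarral{n+1}=\bigcup\{\neumarral{n},\{\neumarral{n},\neumarral{n}\}\}$ using the explicit indices from Theorems \ref{thm-g-pair} and \ref{thm-g-un}. That is more modular (it reuses established machinery and makes the von Neumann structure transparent) but, as you rightly flag, it puts the burden on the effectiveness of the successor at the level of Kleene indices --- a routine closure property, since the cited proofs are given by explicit $\Lambda$-terms, and note that your appeal to Union goes through Lemma \ref{lem-g-part} and the $\Pi$ conditions, so the resulting codes are intensionally messier than the paper's though extensionally the same sets. The paper's uniform definition also avoids having to argue separately that the extensional members of $\neumarral{n}$ are exactly $\neumarral{0},\dots,\neumarral{n-1}$, which in your version is an extra (easy) induction. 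Both arguments are sound; nothing is missing from yours.
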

\begin{proof} The first infinite von Neumann ordinal, $\omega$, is such a v-set:
$$\omega = \Lambda n.\tuple{fn, \top}$$
where $f$ maps the natural number $n$ to the v-set $\neumarral{n}$, and can be given by general recursion: \begin{align*}
f &= \Lambda n.\Lambda m.\tuple{fm, A(m,n)} \\
A(m,n) &= \begin{cases} \top & \text{if } m<n \\ \bot & \text{if } m \ge n \end{cases}
\end{align*}
By ordinary induction (induction over $\mathbb{N}$), all the $\neumarral{n}$'s are v-sets, and therefore so is $\omega$.  
\end{proof}

\section{The red pill}

So far Extended Church's Thesis has not been invoked. The proofs of the previous sections are valid classically and constructively. To proceed to prove the classically unprovable principles of Replacement, Exponentiation, Dependent Choices, and Enough Projectives, it will be necessary to leave neutral territory. 

In constructive mathematics, Church's Thesis refers to the classically impossible assumption of Markov's school of constructivism that {\it all sequences are computable}. An even stronger form, \formalsys{ECT_0}, was introduced by Troelstra \cite{AT}. It is the schema:
$$(\allnat{n} \phi(n) \to \somenat{m} \psi(n,m)) \to \somenat{e} \allnat{n} \phi(n) \to \defd{en} \wedge \psi(n,en)$$
where $\phi$ is a so-called {\it almost-negative} sentence of arithmetic. $\psi$ is not constrained. The almost-negative sentences include the $\Sigma_1$ sentences (existential quantifiers over $\mathbb{N}$ directly in front of quantifier-free formulas of primitive recursive arithmetic) and are closed under conjunction, implication, and universal quantification over $\mathbb{N}$. Note that $\defd{en}$ is $\Sigma_1$, therefore almost-negative. 

Essentially, \formalsys{ECT_0} means that a statement of first-order arithmetic is true if and only if it has a computable witness, as given by the standard realisability clauses for arithmetic. The almost-negative sentences are essentially those which are already in the form ``$e$ witnesses $\phi$''. 

Here an assumption is required that will be called \formalsys{ECT_V}. It is the same schema as \formalsys{ECT_0}, but the class of almost-negative sentences allowed in the antecedent is expanded to include the new predicates $\mathcal{V}$, $\mathcal{M}$, and $\mathcal{T}$. The choice of defining assumptions for these predicates will make it possible to consider them almost-negative, as later shown by Meta-theorems \ref{mthm-c-ect} and \ref{mthm-cr-ect}.

The assumption that $\mathcal{T}$ is almost-negative means that only almost-negative sentences can be considered Meaningful. This is not entirely faithful to the informal reading of ``meaningful'', but from the point of view of set theory, this restriction results in no loss of generality, since \formalsys{ECT_V} allows any sentence to be converted into a single existential quantifier over $\mathbb{N}$ in front of an almost-negative sentence, and $\setbn{n}{fn}{\somenat{e}\phi(e,n)}$ is extensionally equal to $\setbn{n}{f(n_L)}{\phi(n_L,n_R)}$. (Actually Definition \ref{def-e-set} is an example of this, see section 9.)

The relation $\simeq$ cannot be considered almost-negative and so cannot appear in the antecedent of \formalsys{ECT_V}. Instead a new temporary predicate $\mathcal{R}$ is defined as a fix-point:
\begin{equation}\begin{matrix}
\hfill \allset{x,y} \mathcal{R}(e,x,y) & \leftrightarrow &(\allin{n}{IND(x)} \defd{e_Ln} & \wedge & (e_Ln)_L \in IND(y) \hfill
\\ &&&\wedge& \mathcal{R}((e_Ln)_R,\el{x}{n},\el{y}{(e_Ln)_L})) \hfill
\\ & \wedge & (\allin{n}{IND(y)} \defd{e_Rn} & \wedge & (e_Rn)_L \in IND(x) \hfill
\\ &&&\wedge& \mathcal{R}((e_Rn)_R,\el{y}{n},\el{x}{(e_Rn)_L})) \hfill
\end{matrix}\label{eq-rf}\end{equation}
Essentially $\mathcal{R}(e,x,y)$ says that $e$ witnesses $x \simeq y$. Lemma \ref{lem-g-eq} below shows that $\simeq$ can be defined in terms of $\mathcal{R}$, and so the assumption \eqref{eq-sim} is no longer needed. Here it will have to be assumed that $\mathcal{R}$ is almost-negative. This is justified in the next section where $\mathcal{R}$ will be defined in terms of $\mathcal{T}$.

Other than the two meta-theorems mentioned above, realisability will not be used explicitly; it will all be implicitly contained in \formalsys{ECT_V}. Also, the other principle used by Markov's school, namely Markov's principle, will not be used at all. Finally, the adjective ``computable'' is now vacuous and will be dropped. 

\begin{thm}[Strong Collection] \label{thm-g-coll} Suppose $\psi$ is an arbitrary binary relation (not necessarily a v-set), $x$ is a v-set, and to every element of $x$ there is at least one v-set related to it by $\psi$. Then there is a v-set which contains, for each element of $x$, at least one v-set related to it, and contains only such related v-sets. \end{thm}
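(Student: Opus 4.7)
The plan is to reduce the set-theoretic statement to a numeric $\Pi_2$-shaped statement over natural numbers, apply \formalsys{ECT_V} to extract a computable witness function, and then package the witnesses into a v-set using Lemma \ref{lem-g-part}.

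First I would unfold the hypothesis in the interpretation. The assumption $\allin{a}{x} \someset{b} \psi(a,b)$ becomes, by definition of bounded and unbounded quantification, the numeric statement
$$\allnat{n} \mathcal{T}(\cd{x}{n}) \to \somenat{b} \mathcal{V}(b) \wedge \psi(\el{x}{n},b).$$
The antecedent $\mathcal{T}(\cd{x}{n})$ is almost-negative by assumption on $\mathcal{T}$, so \formalsys{ECT_V} applies and yields a number $e$ such that
$$\allnat{n} \mathcal{T}(\cd{x}{n}) \to \defd{en} \wedge \mathcal{V}(en) \wedge \psi(\el{x}{n},en).$$

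Next I would assemble the collecting v-set. Consider the partial computable sequence $e' = \Lambda n.\tuple{en,\cd{x}{n}}$. By the previous step, whenever $\cd{x}{n}$ is True, $en$ is defined and a v-set, so $e'n$ is defined with $\el{e'}{n} = en$ a v-set; and $\cd{e'}{n} = \cd{x}{n}$ is Meaningful because $x$ is a v-set. Lemma \ref{lem-g-part} then gives a v-set
$$z = \mathbf{D}\,\Lambda n.\tuple{en,\cd{x}{n}}$$
whose elements are precisely the $en$ for $n \in IND(x)$.

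To finish, I would verify both collection clauses. For any $a \in x$, pick $n \in IND(x)$ with $a \simeq \el{x}{n}$; then $en \in z$ and $\psi(\el{x}{n},en)$, and by the substitution property of $\simeq$ for first-order formulas (applied to $\psi$ suitably viewed, or, if $\psi$ is an arbitrary class relation, noting that it is relational on v-sets modulo $\simeq$ in the intended sense of the theorem), some element of $z$ is $\psi$-related to $a$. Conversely, any element of $z$ is $\simeq$ to some $en$ for $n \in IND(x)$, hence $\psi$-related to $\el{x}{n} \in x$.

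The main obstacle is the first step: making sure that the informal set-theoretic hypothesis of Strong Collection genuinely unfolds into a statement with almost-negative antecedent so that \formalsys{ECT_V} can be legitimately applied, and identifying where the assumption that $\mathcal{T}$ (and $\mathcal{V}$ and $\mathcal{R}$) are almost-negative gets used. The packaging via Lemma \ref{lem-g-part} is then essentially a bookkeeping step, and so is the verification of the two collection clauses.
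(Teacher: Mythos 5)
Your proposal is correct and follows essentially the same route as the paper: unfold the hypothesis over $IND(x)$, apply \formalsys{ECT_V} using the almost-negativity of $\mathcal{T}$, and package the resulting $e$ as $\mathbf{D}\Lambda n.\tuple{en,\cd{x}{n}}$ via Lemma \ref{lem-g-part}. (Your worry about substitution for $\psi$ is unnecessary, since the bounded quantifier $\allin{a}{x}$ already ranges over the intensional members $\el{x}{n}$ themselves.)
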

\begin{proof} The hypothesis is that 
$$\allin{a}{x} \someset{b} \psi(a,b)$$
which means that 
$$\allin{n}{IND(x)} \someset{b} \psi(\el{x}{n},b)$$
Since $IND$ is defined in terms of $\mathcal{T}$, which is almost-negative, \formalsys{ECT_V} applies:
$$\somenat{e} \allin{n}{IND(x)} \defd{en} \wedge \mathcal{V}(en) \wedge \psi(\el{x}{n},en)$$
Define:
$$y = \mathbf{D}\Lambda n.\tuple{en, \cd{x}{n}}$$
If $\cd{x}{n}$ is True then $en$ is defined, so Lemma \ref{lem-g-part} applies. $y$ meets the requirements. For,
$$y \simeq \setb{en}{n \in IND(x) \wedge \defd{en}}$$
The elements of $y$ are equal to $en$ for some $n \in IND(x)$. By the properties of $e$, $\mathcal{V}(en)$ and $\psi(\el{x}{n},en)$, so $y$ is indeed a v-set, and each of its elements is related to some element of $x$ by $\psi$. Conversely by the properties of $e$, $en$ is in fact defined for all $n \in IND(x)$, so for any given element of $x$, $y$ does contain at least one v-set related to it.
\end{proof}
\begin{rem} \formalsys{ECT_V} in the above construction cannot guarantee $en$ is a v-set unless $\cd{x}{n}$ is True. Since the Truth of this condition is generally not even semi-decidable, $e$ cannot be patched to output only Valid sets. This is why Definition \ref{def-g-set} only requires $\el{x}{n}$ to be a v-set if $\cd{x}{n}$ is True. \end{rem}

\begin{cor}[Replacement] \label{cor-g-repl} Suppose $\psi$ is an arbitrary binary relation, $x$ is a v-set, and to every element of $x$ there is exactly one v-set related to it by $\psi$. Then there is a v-set which contains all and only the v-sets that are related to some element of $x$. \end{cor}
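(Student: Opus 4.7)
The plan is to derive this directly from Strong Collection (Theorem \ref{thm-g-coll}) together with the uniqueness hypothesis. The hypothesis ``to every element of $x$ there is exactly one v-set related to it by $\psi$'' should be read as the conjunction of the existence premise needed for Strong Collection, together with the functional clause $\allin{a}{x} \allset{b,b'} \psi(a,b) \wedge \psi(a,b') \to b \simeq b'$. The existence part is precisely the premise of Theorem \ref{thm-g-coll}, so Strong Collection furnishes a v-set $y$ whose elements are all related to some element of $x$ by $\psi$, and such that for each element of $x$ at least one element of $y$ is related to it.

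Next I would verify the two directions of membership. The ``only'' direction is given straight by Strong Collection: every element of $y$ is (by the output of Theorem \ref{thm-g-coll}) related to some element of $x$ via $\psi$. For the ``all'' direction, suppose $b$ is a v-set and $\psi(a,b)$ for some $a \in x$. Strong Collection also guarantees some $b' \in y$ with $\psi(a,b')$. The functional clause then forces $b \simeq b'$, and since membership $\in$ is defined modulo $\simeq$, it follows that $b \in y$. This uses only the intuitionistic predicate logic with equality established in Theorem \ref{thm-g-pred}, together with the substitution property of $\simeq$.

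There is no real obstacle here; the only point that requires care is making explicit that ``exactly one v-set related'' is interpreted with respect to extensional equality $\simeq$, not intensional equality of numerical codes. Once that is noted, the proof is essentially a one-line invocation of Theorem \ref{thm-g-coll} followed by an application of functionality and the substitution property.
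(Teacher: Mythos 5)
Your derivation is correct and is exactly the intended one: the paper states Replacement as an unproved corollary of Strong Collection, and your argument (apply Theorem \ref{thm-g-coll}, then use the uniqueness clause together with the fact that $\in$ is defined modulo $\simeq$ to upgrade ``at least one related v-set'' to ``all related v-sets'') is the standard reading of why it is a corollary. Your remark that uniqueness must be taken up to $\simeq$ rather than intensional equality of codes is the right point of care.
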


\begin{thm}[Subset Collection] \label{thm-g-scoll} Suppose $x$ and $y$ are v-sets, and $\psi$ is an arbitrary ternary relation. For any $c$, let $\psi_c$ denote the binary relation resulting from setting $c$ as the third argument of $\psi$. Then there is a v-set $z$ which collects all the $\psi(x)$-subsets of $y$, in the following sense: for any v-set $c$, if to every element of $x$ there is at least one element of $y$ related to it by $\psi_c$, then, $z$ contains a subset of $y$ which in turn contains, for every element of $x$, at least one element related to it by $\psi_c$, and contains only such related elements. \end{thm}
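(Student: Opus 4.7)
The plan is to apply \formalsys{ECT_V} to convert the multi-valued-function hypothesis into a numerical index, and to let $z$ range over all possible such indices so that for each admissible $c$ at least one index is ``good''. Specifically, for any v-set $c$ satisfying the hypothesis, the statement $\allin{a}{x} \somein{b}{y} \psi(a,b,c)$ unfolds to
$$\allin{n}{IND(x)} \somenat{m} \mathcal{T}(\cd{y}{m}) \wedge \psi(\el{x}{n},\el{y}{m},c).$$
Since $n \in IND(x)$ is just $\mathcal{T}(\cd{x}{n})$, which is almost-negative, \formalsys{ECT_V} supplies an index $e$ such that
$$\allin{n}{IND(x)} \defd{en} \wedge en \in IND(y) \wedge \psi(\el{x}{n},\el{y}{en},c).$$

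Independently of $c$, I would define, for each natural number $e$, a candidate realiser
$$u_e = \mathbf{D}\,\Lambda n.\tuple{\el{y}{en},\,\Pi(\cd{x}{n},\cd{y}{en})}.$$
Lemma \ref{lem-g-part} makes this a v-set for every $e$: when $en$ is defined, $\cd{y}{en}$ is Meaningful (since $y$ is a v-set), so by Definition \ref{def-g-pi} the $\Pi$-condition is Meaningful; and its Truth forces both $\cd{x}{n}$ and $\cd{y}{en}$ to be True, making $\el{y}{en}$ a v-set and indeed an element of $y$. Collecting the $u_e$'s gives the required v-set,
$$z = \Lambda e.\tuple{u_e,\top}.$$

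To verify the defining property, fix any $c$ satisfying the hypothesis and let $e$ be the \formalsys{ECT_V} witness above. Then $u_e$ is an element of $z$ and a subset of $y$; for every $n \in IND(x)$, the element $\el{y}{en}$ lies in $u_e$ and is $\psi_c$-related to $\el{x}{n}$; conversely every element of $u_e$ arises as such an $\el{y}{en}$ with $n \in IND(x)$, hence is $\psi_c$-related to some element of $x$. The principal obstacle is ensuring $u_e$ is a legitimate v-set \emph{uniformly in $e$}, not merely for the ``good'' indices: for a general $e$, $en$ may diverge or $\cd{y}{en}$ may be False, but Lemma \ref{lem-g-part} together with the asymmetric Meaningfulness clause of $\Pi$ in Definition \ref{def-g-pi} was designed precisely to absorb such indices harmlessly, letting the undesired conditional elements drop out without disturbing v-set-hood.
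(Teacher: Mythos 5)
Your proposal is correct and is essentially identical to the paper's own proof: your $u_e$ is exactly the paper's $we = \mathbf{D}\Lambda n.\tuple{\el{y}{en},\Pi(\cd{x}{n},\cd{y}{en})}$, your $z$ is the same $\Lambda e.\tuple{we,\top}$, and the application of \formalsys{ECT_V} and the final verification proceed the same way. Your closing remark about $u_e$ being a v-set uniformly in $e$ via Lemma \ref{lem-g-part} and the asymmetric Meaningfulness clause of $\Pi$ is precisely the point the paper also emphasises.
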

\begin{proof} Given v-sets $x$ and $y$, define: \begin{align*}
w &= \Lambda e.\mathbf{D}\Lambda n.\tuple{\el{y}{en}, \Pi(\cd{x}{n},\cd{y}{en})}\\
z &= \Lambda e.\tuple{we,\top}
\end{align*}
For any $e$, Lemma \ref{lem-g-part} applies, since $\cd{x}{n}$ is Meaningful, and if $en$ is defined then $\cd{y}{en}$ is also Meaningful, and its Truth implies that $\el{y}{en}$ is a v-sets. Therefore $we$ is always a v-set, so $z$ is a v-set as well. Moreover, $we$ is always a subset of $y$. Now, suppose $c$ is a v-set, and
$$\allin{a}{x}\somein{b}{y}\psi(a,b,c)$$
This means:
$$\allin{n}{IND(x)}\somein{m}{IND(y)}\psi(\el{x}{n},\el{y}{m},c)$$
And again \formalsys{ECT_V} gives the existence of an $e$ such that:
$$\allin{n}{IND(x)} \defd{en} \wedge en \in IND(y) \wedge \psi(\el{x}{n},\el{y}{m},c)$$
Now, \begin{align*}
we &\simeq \setbn{n}{\el{y}{en}}{\defd{en} \wedge \mathcal{T}(\cd{x}{n}) \wedge \mathcal{T}(\cd{y}{en})}
\\ &\simeq \setb{\el{y}{en}}{n \in IND(x) \wedge \defd{en} \wedge en \in IND(y)}
\end{align*}
By the properties of $e$, each element of $we$ is related by $\psi_c$ to some element of $x$, and conversely for each element of $x$, $we$ has an element which is related to it. In turn, $we$ is an element of $z$. So $z$ is a subset collection as required. \end{proof}

\begin{thm}[Enough Projectives] \label{thm-g-ep} Every v-set $x$ is the image of a v-set $y$ that is ``projective'', meaning that any v-set which is a binary relation whose domain is a superset of $y$, is itself the superset of a v-set that is a choice function with domain $y$. \end{thm}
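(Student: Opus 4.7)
The plan is to construct, for a given v-set $x$, a ``projective cover'' $y = P(x)$ whose extensional elements are Kuratowski pairs of the form $\stuple{\neumarral{n}, \el{x}{n}}$ for each $n \in IND(x)$. The key observation is that different indices $n$ yield extensionally distinct elements of $y$, because $\neumarral{n}$ and $\neumarral{m}$ are extensionally unequal v-sets when $n \neq m$. Thus $y$ ``labels'' each intensional index of $x$ by a unique natural number, and the natural projection onto second coordinates sends $y$ onto $x$, while any relation whose domain contains $y$ can be indexed by $n \in IND(x)$ and so becomes amenable to \formalsys{ECT_V}.

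Concretely, first I would define
$$ y = P(x) = \mathbf{D}\Lambda n.\tuple{\stuple{\neumarral{n}, \el{x}{n}}, \cd{x}{n}}. $$
Lemma~\ref{lem-g-part} applies because whenever $\cd{x}{n}$ is True, $\el{x}{n}$ is a v-set, and so by Pairing (Theorem~\ref{thm-g-pair}) $\stuple{\neumarral{n}, \el{x}{n}}$ is a v-set. Then I would exhibit $x$ as the image of $y$ by constructing, similarly via $\mathbf{D}$, the v-set of Kuratowski pairs $\stuple{\stuple{\neumarral{n}, \el{x}{n}}, \el{x}{n}}$ for $n \in IND(x)$; this is a total function from $y$ onto $x$.

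For projectivity, suppose $R$ is a v-set which is a binary relation with $y \subseteq \mathrm{dom}(R)$. Then for each $n \in IND(x)$, the pair $\stuple{\neumarral{n}, \el{x}{n}}$ lies in $\mathrm{dom}(R)$, so there exist v-sets $b$ and an index $m \in IND(R)$ such that $\el{R}{m} \simeq \stuple{\stuple{\neumarral{n}, \el{x}{n}}, b}$. Quantification over $n \in IND(x)$ uses only the almost-negative predicate $\mathcal{T}$, so \formalsys{ECT_V} applies and yields a computable $e$ such that, for each $n \in IND(x)$, $en$ is defined, lies in $IND(R)$, and satisfies $\el{R}{en} \simeq \stuple{\stuple{\neumarral{n}, \el{x}{n}}, b_n}$ for some v-set $b_n$. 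The choice function is then
$$ f = \mathbf{D}\Lambda n.\tuple{\el{R}{en}, \Pi(\cd{x}{n}, \cd{R}{en})}, $$
which by construction has $f \subseteq R$ extensionally and whose domain is extensionally $y$, since its first coordinates recover the elements of $y$ exactly.

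I expect the main obstacle to be the bookkeeping around extensionality of Kuratowski pairs and the asymmetric Meaningfulness of $\Pi$: one must check that $\cd{R}{en}$ is Meaningful whenever $\cd{x}{n}$ is True (which needs $en \in IND(R)$, and hence needs the output of \formalsys{ECT_V} to be used with care inside Lemma~\ref{lem-g-part}), and that distinct indices $n$ really do produce distinct pairs $\stuple{\neumarral{n}, \el{x}{n}}$, so that $f$ is single-valued as a relation. The surjection and the subset-of-$R$ claims are then routine, but the crux is ensuring that the computable witness $e$ supplied by \formalsys{ECT_V} carries exactly the information needed to rebuild a v-set function via $\mathbf{D}$, rather than just an external functional relation.
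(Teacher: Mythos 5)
Your proof is correct and rests on essentially the same mechanism as the paper's: tag each intensional index $n$ of $x$ with the von Neumann ordinal $\neumarral{n}$ so that distinct indices yield extensionally distinct elements, apply \formalsys{ECT_V} over the almost-negative predicate $\mathcal{T}(\cd{x}{n})$, and reassemble the resulting witness into a v-set choice function via $\mathbf{D}$ under the condition $\Pi(\cd{x}{n},\cd{R}{en})$. The only divergence is that you take the projective set to be the graph $\setb{\stuple{\neumarral{n},\el{x}{n}}}{n \in IND(x)}$ with second-coordinate projection as the surjection, whereas the paper takes it to be the set of tags $\setb{\neumarral{n}}{n \in IND(x)}$ with that graph as the surjection; both work, but the paper's choice makes the projective set literally a subset of $\omega$, which is exactly what Theorem \ref{thm-g-esp} later exploits, so your variant would need one extra (easy) step at that point.
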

\begin{proof} Given a v-set $x$, let $f = \Lambda n.\tuple{\stuple{\neumarral{n},\el{x}{n}},\cd{x}{n}}$ and $y = \Lambda n.\tuple{\neumarral{n},\cd{x}{n}}$. $f$ is a set-theoretic surjection from $y$ onto $x$. To see that $y$ is projective, suppose $g$ is a relation as in the hypothesis. Then:
$$\allin{a}{y}\someset{b} \stuple{a,b} \in g$$
And again, \formalsys{ECT_V} applies to give an $e$ such that:
$$\allin{n}{IND(y)} \defd{en} \wedge en \in \mathbb{V} \wedge \stuple{\el{y}{n},en} \in g$$
And furthermore $\cd{y}{n} = \cd{x}{n}$ and $\el{y}{n} = \neumarral{n}$. Define:
$$h = \mathbf{D}\Lambda n.\tuple{\stuple{\neumarral{n},\neumarral{en}},\cd{x}{n}}$$
If $\cd{x}{n}$ is True then $en$ is defined, so Lemma \ref{lem-g-part} applies. All the $\neumarral{n}$'s are distinct, so $h$ is in fact a function. Moreover it is a subset of $g$, and its domain is $y$. It is therefore a choice function as required. \end{proof}

\begin{thm}[Dependent Choices]\label{thm-g-dc} Suppose $z$ is a v-set, $\psi$ is an arbitrary binary relation, and to every element of $z$ there is an element of $z$ related to it by $\psi$. Then to every $x \in z$ there is a v-set which is a choice sequence of elements of $z$ (function with domain $\omega$ and range a subset of $z$) starting with $x$ and such that each successor is related to its predecessor. \end{thm}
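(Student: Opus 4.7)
The plan is to follow the same pattern as Strong Collection and Enough Projectives: extract a numerical choice function using \formalsys{ECT_V}, iterate it to build an infinite sequence of indices in $IND(z)$, and then package the result as a v-set of Kuratowski pairs with domain $\omega$.

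First I would rewrite the hypothesis $\allin{a}{z}\somein{b}{z} \psi(a,b)$ in terms of the underlying indices as
$$\allin{n}{IND(z)} \somenat{m} \mathcal{T}(\cd{z}{m}) \wedge \psi(\el{z}{n},\el{z}{m}).$$
The antecedent $n\in IND(z)$ is just $\mathcal{T}(\cd{z}{n})$, which is almost-negative; the matrix is an unconstrained binary relation on $n$ and $m$, so \formalsys{ECT_V} applies and yields some $e$ such that, for every $n\in IND(z)$, $\defd{en}$, $en\in IND(z)$, and $\psi(\el{z}{n},\el{z}{en})$.

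Next, given $x\in z$, unfolding the definition of membership gives some $n_0\in IND(z)$ with $x\simeq\el{z}{n_0}$. Using ordinary primitive recursion over $\mathbb{N}$, define a total computable sequence $a$ by $a_0=n_0$ and $a_{k+1}=e(a_k)$. A routine induction on $k$ shows that $a_k\in IND(z)$ for all $k$ and that $\psi(\el{z}{a_k},\el{z}{a_{k+1}})$ holds, by repeated application of the witnessing property of $e$.

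Finally I would assemble the choice sequence as the v-set
$$f = \Lambda k.\tuple{\stuple{\neumarral{k},\el{z}{a_k}},\top}$$
(the Kuratowski pair being supplied by Pairing, Theorem \ref{thm-g-pair}). Every condition is $\top$, so $f$ is unconditionally a v-set whose elements are the pairs $\stuple{\neumarral{k},\el{z}{a_k}}$. Since the $\neumarral{k}$'s are pairwise distinct, $f$ is a set-theoretic function; its domain is extensionally $\omega$ (Theorem \ref{thm-g-inf}), its range is a subset of $z$ because each $a_k\in IND(z)$, its value at $\neumarral{0}$ is $\el{z}{n_0}\simeq x$ (the substitution lemma converting $\el{z}{n_0}$ to $x$), and each successor is $\psi$-related to its predecessor by construction. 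The main obstacle is just the usual bookkeeping needed to see the hypothesis in the almost-negative form required by \formalsys{ECT_V}, after which the set-theoretic content reduces to an elementary recursion on $\mathbb{N}$.
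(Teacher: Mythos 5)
Your proposal is correct and follows essentially the same route as the paper's own proof: apply \formalsys{ECT_V} to the hypothesis read over $IND(z)$ to extract a numerical successor function $e$, iterate it by primitive recursion from an index $n_0$ witnessing $x\in z$, and package the resulting sequence as $\Lambda k.\tuple{\stuple{\neumarral{k},\el{z}{a_k}},\top}$. The only difference is that you spell out the bookkeeping (the induction showing $a_k\in IND(z)$ and the appeal to the substitution lemma) which the paper leaves implicit.
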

\begin{proof} It can be shown to follow from Enough Projectives \cite[\S 8.3]{AR}, but here, it is simple enough to construct directly. By hypothesis, $\allin{u}{z} \somein{v}{z} \psi(u,v)$; apply \formalsys{ECT_V} to get an $e$ such that:
$$\allin{n}{IND(z)} \defd{en} \wedge en \in IND(z) \wedge \psi(\el{z}{n},\el{z}{en})$$
Given $x \in z$ there is an $n_0 \in IND(z)$ such that $\el{z}{n_0} \simeq x$. Define $f$ by primitive recursion such that $f0 = n_0$ and $f(n+1) = e(fn)$. Then define:
$$g = \Lambda n.\tuple{\stuple{\neumarral{n},\el{z}{fn}},\top}$$
This v-set is the required choice sequence. \end{proof}

\begin{lem} \label{lem-g-eq} For any two v-sets $x$ and $y$, $x \simeq y$ if and only if there is an $e$ such that $\mathcal{R}(e,x,y)$. \end{lem}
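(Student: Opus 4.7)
The plan is to prove both directions by Set Induction (Theorem \ref{thm-g-ind}) on $x$, with a predicate that universally quantifies over $y$, so single induction suffices.

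For the easier direction, namely $\somenat{e} \mathcal{R}(e,x,y) \to x \simeq y$, I would take the predicate $\phi(x) \iff \allset{y} \allnat{e} \mathcal{R}(e,x,y) \to x \simeq y$, and suppose $\allin{a}{x} \phi(a)$. Given $y$ and $e$ with $\mathcal{R}(e,x,y)$, unfolding the fix-point \eqref{eq-rf} yields, for each $n \in IND(x)$, that $(e_L n)_L \in IND(y)$ and $\mathcal{R}((e_L n)_R, \el{x}{n}, \el{y}{(e_L n)_L})$. Since $\el{x}{n}$ is an element of $x$, the inductive hypothesis $\phi(\el{x}{n})$ gives $\el{x}{n} \simeq \el{y}{(e_L n)_L}$, so $\allin{a}{x} \somein{b}{y} a \simeq b$. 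The symmetric condition from the $e_R$ clause gives the converse, so by the fix-point \eqref{eq-sim} for $\simeq$, we conclude $x \simeq y$.

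For the forward direction, I would take $\phi(x) \iff \allset{y} x \simeq y \to \somenat{e} \mathcal{R}(e,x,y)$ and again assume $\allin{a}{x} \phi(a)$. Given $y$ with $x \simeq y$, the fix-point \eqref{eq-sim} yields $\allin{n}{IND(x)} \somein{m}{IND(y)} \el{x}{n} \simeq \el{y}{m}$. Applying the inductive hypothesis $\phi(\el{x}{n})$ converts this to
$$\allin{n}{IND(x)} \somenat{m} \somenat{e'} m \in IND(y) \wedge \mathcal{R}(e', \el{x}{n}, \el{y}{m}).$$
The antecedent $n \in IND(x)$ is defined in terms of $\mathcal{T}$, which is almost-negative; the conclusion contains only almost-negative predicates ($IND$ is defined via $\mathcal{T}$, and $\mathcal{R}$ is assumed almost-negative). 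Therefore \formalsys{ECT_V} applies, and produces an index $e_L$ such that for all $n \in IND(x)$, $\defd{e_L n}$, $(e_L n)_L \in IND(y)$, and $\mathcal{R}((e_L n)_R, \el{x}{n}, \el{y}{(e_L n)_L})$. Running the same argument starting from the symmetric half of \eqref{eq-sim} yields an analogous $e_R$, and then $e = \tuple{e_L, e_R}$ witnesses $\mathcal{R}(e,x,y)$ by the fix-point \eqref{eq-rf}.

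The main obstacle is to be sure that \formalsys{ECT_V} is legitimately applicable in the forward direction: this requires both that $\mathcal{T}$ (hence $IND$) and $\mathcal{R}$ be almost-negative. The paper has stated that these are to be assumed almost-negative (and that this will be justified later in section 5, where $\mathcal{R}$ is reduced to $\mathcal{T}$), so nothing further has to be checked here. A secondary point to be careful about is that the inductive hypothesis only tells us about $\el{x}{n}$ for $n \in IND(x)$, but this is exactly the range over which we need it in both directions, so no additional machinery is required.
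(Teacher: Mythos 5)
Your proof is correct and is essentially the paper's own argument: both unfold the fix-points \eqref{eq-rf} and \eqref{eq-sim}, apply Set Induction, and invoke \formalsys{ECT_V} to manufacture the witness in the forward direction. The only difference is bookkeeping --- the paper uses a doubly set-inductive hypothesis on $x$ and $y$ and handles the biconditional in one pass, whereas you induct on $x$ alone with $y$ universally quantified and treat the two directions separately; both versions silently use the evident symmetry of $\mathcal{R}$ (swapping $e_L$ and $e_R$) to apply the inductive hypothesis in the $e_R$ clause, so the presentations are equivalent.
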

\begin{proof} Given some $e$: \begin{align*}
\mathcal{R}(e,x,y) \leftrightarrow &(\allin{n}{IND(x)} \defd{e_Ln} \wedge (e_Ln)_L \in IND(y) \wedge \mathcal{R}((e_Ln)_R,\el{x}{n},\el{y}{(e_Ln)_L})) 
\\ \wedge & (\allin{n}{IND(y)} \defd{e_Rn} \wedge (e_Rn)_L \in IND(x) \wedge \mathcal{R}((e_Rn)_R,\el{y}{n},\el{x}{(e_Rn)_L}))
\end{align*}
Take, for a doubly set-inductive hypothesis, that for any element $a$ of $x$ and any element $b$ of $y$, $a \simeq b$ if and only if there is an $f$ such that $\mathcal{R}(f,a,b)$. So, if $\somenat{e} \mathcal{R}(e,x,y)$ it follows that:
$$(\allin{n}{IND(x)} \somein{m}{IND(y)} \el{x}{n} \simeq \el{y}{m}) \wedge (\allin{m}{IND(y)} \somein{n}{IND(x)} \el{y}{m} \simeq \el{x}{n})$$
And conversely, by \formalsys{ECT_V}, the above implies $\somenat{e} \mathcal{R}(e,x,y)$. But the above means that
$$(\allin{a}{x} \somein{b}{y} a \simeq b) \wedge (\allin{b}{y} \somein{a}{x} b \simeq a)$$
Which, by \eqref{eq-sim}, just means
$$x \simeq y$$
So, by Set Induction, 
$$x \simeq y \leftrightarrow \somenat{e} \mathcal{R}(e,x,y)$$
\end{proof}

\begin{lem}\label{lem-g-real} For any sentence $\phi$ expressible in the language of first-order set theory, there is an almost-negative predicate $\phi'$ such that $\phi \iff \somenat{e} \phi'(e)$.\end{lem}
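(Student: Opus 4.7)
The plan is to proceed by structural induction on $\phi$, showing at each step how to express $\phi$ as $\somenat{e}\phi'(e)$ with $\phi'$ almost-negative, using the closure properties of the almost-negative class (closed under $\wedge$, $\to$, $\allnat{}$, and containing $\Sigma_1$ formulas as well as $\mathcal{V}$, $\mathcal{M}$, $\mathcal{T}$, $\mathcal{R}$) together with \formalsys{ECT_V} to pull existentials out through implications and universals.

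For the base cases, the atomic formulas of first-order set theory are $x \simeq y$ and $x \in y$ between v-sets. By Lemma \ref{lem-g-eq}, $x \simeq y \iff \somenat{e}\mathcal{R}(e,x,y)$, and $\mathcal{R}$ is almost-negative by stipulation. For membership, $x \in y$ unfolds to $\somein{x'}{y} x \simeq x'$, i.e.\ $\somenat{n}(\mathcal{T}(\cd{y}{n}) \wedge x \simeq \el{y}{n})$, which becomes $\somenat{m}(\mathcal{T}(\cd{y}{m_L}) \wedge \mathcal{R}(m_R,x,\el{y}{m_L}))$, again existential over an almost-negative body. Assuming the inductive hypothesis that subformulas have been put in the form $\somenat{e}\phi'_i(e)$, the inductive cases are handled as follows. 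Conjunction pairs witnesses: $\somenat{e}(\phi'_1(e_L) \wedge \phi'_2(e_R))$. Disjunction uses a decidable tag bit: $\somenat{e}((e_L = 0 \to \phi'_1(e_R)) \wedge (e_L \neq 0 \to \phi'_2(e_R)))$; the equivalence to $\phi_1 \vee \phi_2$ uses the decidability of $e_L = 0$. Bounded existentials combine two existentials into one in the obvious way, and unbounded existentials relativised to $\mathbb{V}$ code the witness and its Validity together: $\somenat{m}(\mathcal{V}(m_L) \wedge \phi'(m_L,m_R))$.

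The nontrivial cases are implication and universal quantification, and these are precisely where \formalsys{ECT_V} does the work. For $\phi_1 \to \phi_2$, one rewrites it as $\allnat{e_1}\phi'_1(e_1) \to \somenat{e_2}\phi'_2(e_2)$; since $\phi'_1$ is almost-negative, \formalsys{ECT_V} converts this to $\somenat{e}\allnat{e_1}(\phi'_1(e_1) \to \defd{ee_1} \wedge \phi'_2(ee_1))$, whose body is almost-negative because $\defd{}$ is $\Sigma_1$ and the almost-negative class is closed under $\wedge$, $\to$, and $\allnat{}$. Universals, both bounded and unbounded, are handled analogously: $\allin{x}{y}\phi(x)$ becomes $\allnat{n}(\mathcal{T}(\cd{y}{n}) \to \somenat{e}\phi'(\el{y}{n},e))$, and \formalsys{ECT_V} applies since $\mathcal{T}$ is almost-negative, giving $\somenat{e}\allnat{n}(\mathcal{T}(\cd{y}{n}) \to \defd{en} \wedge \phi'(\el{y}{n},en))$; similarly $\allset{x}\phi(x)$ becomes $\somenat{e}\allnat{x}(\mathcal{V}(x) \to \defd{ex} \wedge \phi'(x,ex))$. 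Negation is just the special case $\phi \to \bot$. The main obstacle, and the only real content, is ensuring that each antecedent to which \formalsys{ECT_V} is applied is in fact almost-negative, which is guaranteed by the inductive hypothesis together with the stipulations on $\mathcal{V}$, $\mathcal{M}$, $\mathcal{T}$, and $\mathcal{R}$; once that is in hand, the whole argument is a routine propagation of the existential-over-almost-negative form through the formula-building operations.
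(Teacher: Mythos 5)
Your proposal is correct and follows essentially the same route as the paper: structural recursion with $x \simeq y$ handled via Lemma \ref{lem-g-eq} and $\mathcal{R}$, trivial witness-pairing for conjunction, disjunction, and the existential quantifiers, and \formalsys{ECT_V} applied to the (inductively almost-negative) antecedents for implication and the two universal quantifiers. The only cosmetic differences are that you unfold $x \in y$ explicitly (in the paper it is already a defined abbreviation covered by the bounded-existential and equality cases) and fold the $\bot$ base case into negation rather than listing it separately.
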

\begin{proof} This transformation is effected recursively on the structure of $\phi$. \begin{itemize}
\item The transformation of the base case $x \simeq y$ is given by the previous lemma: $\mathcal{R}(e,x,y)$.
\item The other base case, $\bot$, transforms to itself.
\item Conjunction, disjunction, and the existential quantifiers are straightforward: \begin{align*}
(\somenat{e} \phi'(e)) \wedge (\somenat{e} \psi'(e)) &\leftrightarrow \somenat{e} (\phi'(e_L) \wedge \psi'(e_R)) \\
(\somenat{e} \phi'(e)) \vee (\somenat{e} \psi'(e)) &\leftrightarrow \somenat{e} (e_L = 0 \to \phi'(e_R)) \wedge (e_L \ne 0 \to \psi'(e_R)) \\
(\somein{x}{y} \somenat{e} \phi'(e,x)) &\leftrightarrow \somenat{e} \mathcal{T}(\cd{y}{e_L}) \wedge \phi'(e_R,\el{y}{e_L}) \\
(\someset{x} \somenat{e} \phi'(x)) &\leftrightarrow \somenat{e} \mathcal{V}(e_L) \wedge \phi'(e_R, e_L)
\end{align*}
\item Implication and the universal quantifiers require \formalsys{ECT_V} for their transformation:\begin{align*}
((\somenat{e} \phi'(e)) \to (\somenat{e} \psi'(e))) &\leftrightarrow \somenat{e} \allnat{f} \phi'(f) \to \defd{ef} \wedge \psi'(ef) \\
(\allin{x}{y} \somenat{e} \phi'(e,x)) &\leftrightarrow \somenat{e} \allnat{n} \mathcal{T}(\cd{y}{n}) \to \defd{en} \wedge \phi'(en,\el{y}{n}) \\
(\allset{x} \somenat{e} \phi'(e,x)) &\leftrightarrow \somenat{e} \allnat{x} \mathcal{V}(x) \to \defd{ex} \wedge \phi'(ex,x)
\end{align*}
\end{itemize}
The resulting formulas are in the right form, given that $\mathcal{R}$, $\mathcal{T}$, and $\mathcal{V}$ are almost-negative.
\end{proof}

\begin{thm}[Relativised Dependent Choices] \label{thm-g-rdc} Suppose $\phi$ is a unary predicate expressible in the language of first-order set theory (i.e. a class), $\psi$ is an arbitrary binary relation (not necessarily a class), and to each $x$ satisfying $\phi$ there is at least one $y$ satisfying $\phi$ that is related to it by $\psi$. Then, to every $x$ satisfying $\phi$ there is a choice sequence of v-sets, all of which satisfy $\phi$, starting with $x$ and such that each successor is related to its predecessor by $\psi$. \end{thm}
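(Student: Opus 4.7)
The plan is to follow the strategy of Theorem \ref{thm-g-dc}, using \formalsys{ECT_V} to extract a computable successor function, except that because $\phi$ defines a class rather than a v-set, Lemma \ref{lem-g-real} must first be used to reduce $\phi$ to an almost-negative predicate so that \formalsys{ECT_V} applies.

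Specifically, I would obtain an almost-negative $\phi'$ with $\phi(x) \leftrightarrow \somenat{e} \phi'(e,x)$ and an almost-negative $\chi'$ with $\phi(y) \wedge \psi(x,y) \leftrightarrow \somenat{f} \chi'(f,x,y)$; by the conjunction clause of Lemma \ref{lem-g-real} one may take $\chi'(f,x,y) = \phi'(f_L,y) \wedge \psi'(f_R,x,y)$ for some almost-negative $\psi'$. The hypothesis of the theorem then rephrases as
$$\allnat{x,e}. \mathcal{V}(x) \wedge \phi'(e,x) \to \someset{y} \somenat{f} \chi'(f,x,y),$$
whose antecedent is almost-negative. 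Applying \formalsys{ECT_V} yields a $g$ such that whenever $\mathcal{V}(x)$ and $\phi'(e,x)$ hold, $g\tuple{x,e}$ is defined and is a pair $\tuple{y,f}$ with $\mathcal{V}(y)$, $\phi'(f_L,y)$, and $\psi'(f_R,x,y)$.

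Given a v-set $x$ with $\phi(x)$, pick some $e_0$ with $\phi'(e_0,x)$ and define by primitive recursion $h(0)=\tuple{x,e_0}$ and $h(n{+}1)=\tuple{(g(h(n)))_L, ((g(h(n)))_R)_L}$. An easy induction on $n$ shows simultaneously that $(h(n))_L$ is a v-set satisfying $\phi$, that $(h(n))_R$ is a $\phi'$-realiser of it, and that $(h(n{+}1))_L$ is $\psi$-related to $(h(n))_L$. The required choice sequence is then built exactly as in Theorem \ref{thm-g-dc}:
$$\mathrm{seq} = \Lambda n.\tuple{\stuple{\neumarral{n},(h(n))_L},\top}.$$

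The principal obstacle is a chicken-and-egg one: invoking $g$ at the $(n{+}1)^{\text{st}}$ step requires a $\phi'$-witness for the $n^{\text{th}}$ v-set in the sequence, which must itself have been produced by $g$ at the previous step. The conjunction clause of Lemma \ref{lem-g-real}, which makes the $\phi'$-realiser of $y$ directly accessible as the left projection inside any realiser of $\phi(y)\wedge\psi(x,y)$, is precisely the hook that lets the iteration bootstrap itself from the initial pair $\tuple{x,e_0}$.
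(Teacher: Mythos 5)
Your proposal follows the paper's own proof essentially step for step: transform $\phi$ into $\somenat{e} \phi'(e,x)$ with $\phi'$ almost-negative via Lemma \ref{lem-g-real}, apply \formalsys{ECT_V} to the rewritten hypothesis so that the resulting function outputs both the next v-set and a $\phi'$-realiser for it, and iterate by primitive recursion from the initial pair $\tuple{x,e_0}$; your closing remark about the bootstrap is exactly the point of the paper's recursion $j(n+1)=e(jn)$, and the final coding of the choice sequence is the same.

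The one misstep is your treatment of $\psi$. You invoke the conjunction clause of Lemma \ref{lem-g-real} to produce an almost-negative $\psi'$ with $\psi(x,y) \leftrightarrow \somenat{f} \psi'(f,x,y)$, but that lemma applies only to sentences of first-order set theory, and the theorem explicitly allows $\psi$ to be an arbitrary relation, \emph{not} a class, so no such $\psi'$ need exist. Fortunately the step is superfluous: in \formalsys{ECT_V} only the antecedent must be almost-negative, while the consequent is unconstrained, so $\psi(x,y)$ can simply be carried along untouched in the consequent (as the paper does), and your recursion $h(n{+}1)$ never uses the purported $\psi$-realiser anyway. Taking the consequent to be $\someset{y} (\somenat{g} \phi'(g,y)) \wedge \psi(x,y)$ and deleting $\psi'$ repairs the argument without changing anything else.
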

\begin{proof} The hypothesis is that:
$$\allset{x} \phi(x) \to \someset{y} \phi(y) \wedge \psi(x,y)$$
$\phi$ is not necessarily in a form to which \formalsys{ECT_V} can be applied. The previous lemma is used to transform the hypothesis into:
$$\allset{x} (\somenat{f} \phi'(f,x)) \to \someset{y} (\somenat{g} \phi'(g,y)) \wedge \psi(x,y)$$
Applying \formalsys{ECT_V} to this transformed hypothesis results in an $e$ such that:\begin{align*}
\allset{x}\allnat{f} \phi'(f,x) \to &\defd{e\tuple{x,f}} \wedge \mathcal{V}((e\tuple{x,f})_L) 
\\ &\wedge \phi'((e\tuple{x,f})_R,(e\tuple{x,f})_L) \wedge \psi(x,(e\tuple{x,f})_L)
\end{align*}
Now, given a v-set $x$ such that $\phi(x)$, there is an $f$ such that $\phi'(f,x)$. So define a sequence $j$ by primitive recursion on  $e$: \begin{align*}
j0 &= \tuple{x,f} \\
j(n+1) &= e(jn)
\end{align*}
The actual set-theoretic choice sequence is given by:
$$k = \Lambda n.\tuple{\stuple{\neumarral{n},(jn)_L}, \top}$$
\end{proof}

\section{The missing piece}

One important principle of \formalsys{CZF} is missing: Bounded Separation. For this, a non-trivial assumption on Meaning is needed. Under this assumption it will be shown that $\mathcal{R}$ can be defined in terms of $\mathcal{T}$, so assumption \eqref{eq-rf} from the previous section can be dropped. It will also be shown that Definition \ref{def-g-pi} is redundant. This will leave just first-order arithmetic, the predicates $\mathcal{V}$, $\mathcal{M}$, $\mathcal{T}$, the schema \formalsys{ECT_V}, the assumptions \eqref{eq-vi} and \eqref{eq-ve}, and two more assumptions below, \eqref{eq-xit} and \eqref{eq-xim}.

So far only closure under conjunction was required. To proceed, one may expect that Meaning will require closure under the remaining propositional connectives and quantifiers. As discussed in the previous section, only almost-negative sentences need be considered Meaningful. $\Sigma_1$ sentences are therefore taken to be Meaningful, and Meaning is taken to be closed under implication and universal quantification. The latter means that given a sequence of Meaningful conditions, there is another Meaningful condition expressing their infinitary conjunction. To simplify the formal treatment, these are combined into a single operator.

\begin{defn}\label{def-g-xi} $\Xi(f,g)$ is a condition which asserts that, for all $n$, whenever $fn$ is defined and True, $gn$ is also defined and True. It is Meaningful if, for all $n$, when $fn$ is defined, it is Meaningful, and when $fn$ and $gn$ are defined and $fn$ is True, $gn$ is Meaningful. \end{defn}

It is not hard to see that this captures all of the above, including the asymmetrical Meaning conditions on $\Pi$ of Definition \ref{def-g-pi}. Now, formal assumptions on $\mathcal{M}$ and $\mathcal{T}$ are made following Definition \ref{def-g-xi}. Assume:\begin{align}
\allnat{f,g} &\mathcal{M}(\Xi(f,g)) \to (\mathcal{T}(\Xi(f,g)) \leftrightarrow (\allnat{n} \defd{fn} \wedge \mathcal{T}(fn) \to \defd{gn} \wedge \mathcal{T}(gn))) \label{eq-xit}\\
\allnat{f,g} &(\allnat{n} \defd{fn} \to \mathcal{M}(fn)) \wedge (\allnat{n} \defd{fn} \wedge \defd{gn} \wedge \mathcal{T}(fn) \to \mathcal{M}(gn)) \to \mathcal{M}(\Xi(f,g))  \label{eq-xim}
\end{align}

\begin{thm}\label{thm-g-mnm} ``Meaningful'' is not a Meaningful adjective. That is, there is no $\mu$ such that $\mu \tuple{n,m}$ is defined and Meaningful for all $n$ and $m$, and such that $n$ is Meaningful if and only if there exists an $m$ such that $\mu \tuple{n,m}$ is True.\end{thm}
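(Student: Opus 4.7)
My plan is to argue by diagonalization in the style of Tarski's undefinability theorem. Suppose for contradiction that such a $\mu$ exists. The first step is to exploit the closure of Meaningfulness under $\Xi$ to internalize the \emph{negation} of Meaningfulness. Define the primitive recursive operator $N(x) := \Xi(\Lambda m.\mu\tuple{x,m}, \Lambda m.\bot)$. Axiom \eqref{eq-xim} applies uniformly, since $\Lambda m.\mu\tuple{x,m}$ is totally defined and uniformly Meaningful by hypothesis, and $\Lambda m.\bot$ is uniformly Meaningful; so $\mathcal{M}(N(x))$ holds for every $x$. Axiom \eqref{eq-xit} then yields $\mathcal{T}(N(x)) \leftrightarrow \allnat{m}\lnot\mathcal{T}(\mu\tuple{x,m})$, and by intuitionistic predicate logic combined with the $\mu$-hypothesis this is equivalent to $\lnot\mathcal{M}(x)$.

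The second step is to apply Kleene's second recursion theorem to the computable map $x \mapsto N(x)$ to obtain a code $c$ whose conditional behaviour is extensionally identical to that of $N(c)$ — informally, the Meaningful condition asserting ``$c$ is not Meaningful.'' Then $\mathcal{M}(c)$ holds (inherited from $\mathcal{M}(N(c))$), and $\mathcal{T}(c) \leftrightarrow \lnot\mathcal{M}(c)$. Plugging $\mathcal{M}(c)$ into the biconditional gives $\lnot\mathcal{T}(c)$; meanwhile, the positive direction of the $\mu$-hypothesis extracts an explicit witness $m_0$ with $\mathcal{T}(\mu\tuple{c,m_0})$.

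The final step, which I expect to be the principal obstacle, is to turn this into an outright contradiction. The naive combination delivers only the Meaningful-but-false condition $c$, which is by itself internally consistent (e.g., $\bot$ is such a condition). To close the diagonal one must either (i) sharpen the construction so that it is $\mathcal{M}(c)$ rather than merely $\mathcal{T}(c)$ that becomes equivalent to its own negation — perhaps by iterating $\mu$ through a doubly-diagonal construction where the hypothesised witness $m_0$ itself witnesses $\lnot\mathcal{M}(c)$ — or (ii) extract more information from \eqref{eq-xit} at $m_0$ than merely re-deriving $\lnot\mathcal{T}(c)$. The delicate point is that $\mu\tuple{n,m}$ is assumed to be defined and Meaningful for \emph{every} $n$, including inconsistent self-referential $c$, so the combined strength of \eqref{eq-xit}, \eqref{eq-xim}, and the $\mu$-hypothesis must be marshalled carefully to prevent the inconsistency from being absorbed as a harmless Meaningful-but-false statement.
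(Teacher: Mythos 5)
Your first two steps are sound as far as they go, and you have correctly diagnosed where your construction dies: a condition $c$ asserting ``$c$ is not Meaningful'' which turns out to be Meaningful and not True is internally consistent (it is just another $\bot$), so diagonalizing on Meaningfulness itself cannot close. Neither of your proposed repairs is the one that works. The missing idea is to change the diagonal sentence from ``I am not Meaningful'' to the \emph{guarded liar} ``if I am Meaningful then I am not True''. Concretely, first build a plain negation operator on conditions, $\nu = \Lambda n.\Xi(\Lambda m.n, \Lambda m.\bot)$, so that $\nu n$ is Meaningful iff $n$ is, and True iff $n$ is not True. Then set
$$\tilde{\nu} = \Lambda n.\Xi(\Lambda m.\mu\tuple{n,m},\ \Lambda m.\nu n).$$
This is where $\mu$ earns its keep: the antecedent $\mu\tuple{n,m}$ is unconditionally Meaningful by hypothesis, and the asymmetric clause of \eqref{eq-xim} only requires the consequent $\nu n$ to be Meaningful when some $\mu\tuple{n,m}$ is True, i.e.\ exactly when $n$ is Meaningful --- which is exactly when $\nu n$ is Meaningful. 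Hence $\tilde{\nu} n$ is Meaningful for \emph{every} $n$, and by \eqref{eq-xit} it is True iff ($n$ is Meaningful implies $n$ is not True).

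Now take the fixed point $\mathbf{L} = \tilde{\nu}\mathbf{L}$ by the recursion theorem. $\mathbf{L}$ is Meaningful outright, so the guard in its own Truth condition is discharged and the biconditional collapses to $\mathcal{T}(\mathbf{L}) \leftrightarrow \lnot\mathcal{T}(\mathbf{L})$, an outright intuitionistic contradiction. The structural difference from your attempt is that the self-reference sits in the \emph{consequent} position of the $\Xi$, guarded by $\mu$, rather than in the antecedent with a vacuous $\bot$ consequent; it is the Truth predicate, not the Meaningfulness predicate, that gets diagonalized, and $\mu$'s only role is to make that diagonalization Meaningful. Your options (i) and (ii) both keep chasing a contradiction out of $\lnot\mathcal{M}$ or out of the witness $m_0$, and neither will produce one, because the axioms place no upper bound on what is Meaningful --- only the liar does the work.
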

\begin{proof} First define:
$$\nu = \Lambda n.\Xi(\Lambda m.n, \Lambda m.\bot)$$
This is just a negation operator. $\nu n$ is Meaningful iff $n$ is, and it is True iff $n$ is not True. Now, if there were a $\mu$ satisfying the hypothesis, then an extended negation operator could be defined:
$$\tilde{\nu} = \Lambda n. \Xi(\Lambda m. \mu \tuple{n,m}, \Lambda m.\nu n)$$
By hypothesis, $\mu \tuple{n,m}$ would be Meaningful for all $n$ and $m$, and if $\mu \tuple{n,m}$ is True then $n$ is Meaningful. So, by the Meaning conditions for $\Xi$, $\tilde{\nu}n$ would be Meaningful for all $n$. It would be True iff $n$ being Meaningful implied that $n$ is not True. Now define by general recursion: 
$$\mathbf{L} = \tilde{\nu}\mathbf{L}$$
And this is a contradiction, because $\mathbf{L}$ would be Meaningful, and $\mathbf{L}$ would be True iff $\mathbf{L}$ were not True. So, there is no such $\mu$. \end{proof}

\begin{lem}\label{lem-g-req} Given a number $e$ and v-sets $x$ and $y$, there is a Meaningful condition $\overline{\mathcal{R}}(e,x,y)$ which can be constructed using only the operator $\Xi$ and such that $\mathcal{T}(\overline{\mathcal{R}}(e,x,y)) \leftrightarrow \mathcal{R}(e,x,y)$.\end{lem}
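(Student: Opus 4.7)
The plan is to define $\overline{\mathcal{R}}$ by general recursion as a total computable ternary function that, on any input $(e,x,y)$, returns a code built syntactically from nested $\Xi$'s (with leaves being primitive Meaningful conditions such as $\Sigma_1$ sentences and values $\cd{x}{n}$), and then to verify both Meaningfulness and semantic equivalence with $\mathcal{R}$ by double Set Induction on the v-sets $x$ and $y$. As a preliminary, encode an asymmetric binary ``conjunction'' $p \wedge^{*} q$ using only $\Xi$: let $\top_0 = \Xi(f_\emptyset, f_\emptyset)$ for some everywhere-undefined computable partial $f_\emptyset$ (vacuously True and Meaningful by \eqref{eq-xit} and \eqref{eq-xim}), and set $p \wedge^{*} q := \Xi(f, g)$ where $f(0) = \top_0$, $f(1) = p$, $g(0) = p$, $g(1) = q$, and $f, g$ are undefined for $n \geq 2$. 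By \eqref{eq-xit} this is True exactly when $p$ and $q$ both are, and by \eqref{eq-xim} it is Meaningful provided $p$ is Meaningful and $q$ is Meaningful under the hypothesis that $p$ is True; iterating to the right yields an asymmetric ternary conjunction $p \wedge^{*} q \wedge^{*} r$ with the analogous asymmetric Meaningfulness.

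Now, by general recursion, define $\overline{\mathcal{R}}(e,x,y)$ to be the code
\[
\Xi(\Lambda n.\cd{x}{n},\ g_L) \wedge^{*} \Xi(\Lambda n.\cd{y}{n},\ g_R)
\]
where $g_L = \Lambda n.\ \defd{e_Ln} \wedge^{*} \cd{y}{(e_Ln)_L} \wedge^{*} \overline{\mathcal{R}}((e_Ln)_R, \el{x}{n}, \el{y}{(e_Ln)_L})$ and $g_R$ is obtained from $g_L$ by swapping the pair $(e_L,x)$ with $(e_R,y)$. The self-reference to $\overline{\mathcal{R}}$ inside $g_L$ is resolved by giving $\overline{\mathcal{R}}$ its own index via general recursion; the outer computation terminates because it merely builds a code and never unfolds a $\Xi$ semantically. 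I would then prove by double Set Induction (Theorem \ref{thm-g-ind}, once in each argument) that for all v-sets $x, y$ and all $e$: $\overline{\mathcal{R}}(e,x,y)$ is Meaningful, and $\mathcal{T}(\overline{\mathcal{R}}(e,x,y)) \leftrightarrow \mathcal{R}(e,x,y)$. Meaningfulness uses that $\cd{x}{n}$ and $\cd{y}{n}$ are Meaningful since $x, y$ are v-sets, that $\defd{e_Ln}$ is $\Sigma_1$ hence Meaningful, and that the recursive call is Meaningful by inductive hypothesis whenever both $\cd{x}{n}$ and $\cd{y}{(e_Ln)_L}$ are True (so $\el{x}{n}$ and $\el{y}{(e_Ln)_L}$ are v-sets); these pieces assemble into a single Meaningful code via \eqref{eq-xim}. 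Truth then follows directly from \eqref{eq-xit}: each outer $\Xi$ unfolds into exactly one conjunct of the fix-point definition \eqref{eq-rf}.

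The main obstacle I anticipate is aligning the asymmetric Meaningfulness clauses of $\Xi$ with the asymmetric v-set requirements of the recursive call: $\overline{\mathcal{R}}((e_Ln)_R, \el{x}{n}, \el{y}{(e_Ln)_L})$ is only known to be Meaningful once both $\cd{x}{n}$ and $\cd{y}{(e_Ln)_L}$ are True, so the nested $\wedge^{*}$ must gate its Meaningfulness behind both conditions at the right levels of $\Xi$-nesting. The encoding above is designed precisely so this gating succeeds, but verifying it requires careful bookkeeping through two or three layers of $\Xi$.
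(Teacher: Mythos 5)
Your proposal is correct and follows essentially the same route as the paper: $\overline{\mathcal{R}}$ is defined by general recursion as a code built only from $\Xi$ (the paper writes the inner gate as $\Pi(\cd{y}{m},\overline{\mathcal{R}}(\cdot))$ nested inside $\Xi(\Lambda n.\cd{x}{n},\cdot)$, with $\Pi$ expressed via $\Xi$ exactly as your $\wedge^{*}$), and Meaningfulness and the Truth equivalence with \eqref{eq-rf} are each verified by double Set Induction just as you describe. The only cosmetic difference is your explicit $\defd{e_Ln}$ conjunct, which is redundant (and not a primitive condition in the formal framework of section 5, though it is $\Xi$-encodable): since $g_L(n)$ diverges whenever $e_Ln$ does, the definedness clause of \eqref{eq-xit} already supplies that requirement, which is how the paper handles it.
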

\begin{proof} Define $\overline{\mathcal{R}}$ by general recursion: \begin{align*}
\overline{\mathcal{R}}(e,x,y) &= \Pi(H(e_L,x,y),H(e_R,y,x)) \\
H(e,x,y) &= \Xi(\Lambda n.\cd{x}{n}, \Lambda n.J(n,(en)_L,(en)_R,x,y)) \\
J(n,m,e,x,y) &= \Pi(\cd{y}{m}, \overline{\mathcal{R}}(e,\el{x}{n},\el{y}{m}))
\end{align*}
$\Pi$ can of course be expressed in terms of $\Xi$.

Now, $H(e,x,y)$ is always defined, so $\overline{\mathcal{R}}(e,x,y)$ is too. Given v-sets $x$ and $y$, take, for a doubly set-inductive hypothesis, that $\overline{\mathcal{R}}(f,a,b)$ is Meaningful for all numbers $f$, all elements $a$ of $x$, and all elements $b$ of $y$. $\cd{x}{n}$ is always Meaningful; if true, then $\el{x}{n}$ is a v-set and an element of x. The same goes for $\cd{y}{m}$ are $\el{y}{m}$. By the inductive hypothesis, $H(e,x,y)$ is Meaningful, and so is $\overline{\mathcal{R}}(e,x,y)$. So by Set Induction $\overline{\mathcal{R}}$ is Meaningful for all v-sets. 

Now take for inductive hypothesis that $\overline{\mathcal{R}}(f,a,b)$ has the right Truth conditions for all numbers $f$, all elements $a$ of $x$, and all elements $b$ of $y$. The Truth conditions of $\overline{\mathcal{R}}(e,x,y)$ expand to: \begin{align*}
\mathcal{T}(\overline{\mathcal{R}}(e,x,y)) \leftrightarrow &(\allin{n}{IND(x)} \defd{e_Ln} \wedge (e_Ln)_L \in IND(y) \wedge \mathcal{T}(\overline{\mathcal{R}}((e_Ln)_R,\el{x}{n},\el{y}{(e_Ln)_L})))
\\ \wedge & (\allin{n}{IND(y)} \defd{e_Rn} \wedge (e_Rn)_L \in IND(x) \wedge \mathcal{T}(\overline{\mathcal{R}}((e_Rn)_R,\el{y}{n},\el{x}{(e_Rn)_L})))
\end{align*}
This matches \eqref{eq-rf}, so, by Set Induction, $\overline{\mathcal{R}}$ has the right Truth conditions. \end{proof}

\begin{lem}[Kronecker Delta] \label{lem-g-kron} For any two v-sets $x$ and $y$, there is a v-set which is a subset of $\neumarral{1}$ and which is inhabited if and only if $x \simeq y$. \end{lem}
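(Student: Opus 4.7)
The plan is to directly exhibit the required v-set using the Meaningful condition $\overline{\mathcal{R}}$ constructed in Lemma \ref{lem-g-req}. Recall that $\neumarral{1} = \{\neumarral{0}\} = \{\empty\}$, so being a subset of $\neumarral{1}$ just means every element is extensionally equal to $\empty$.

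Given v-sets $x$ and $y$, I would define
$$ z = \Lambda n.\tuple{\empty, \overline{\mathcal{R}}(n,x,y)} $$
and claim this is the required v-set. To check that $z$ is a v-set, apply \eqref{eq-vi}: for every $n$, $zn$ is defined, and $\cd{z}{n} = \overline{\mathcal{R}}(n,x,y)$ is Meaningful by Lemma \ref{lem-g-req}. If this condition is True, then $\el{z}{n} = \empty$, which is a v-set by Theorem \ref{thm-g-empty}. So the hypotheses of the inductive clause for $\mathcal{V}$ are satisfied.

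Next I would verify the two required properties. Every intensional element $\el{z}{n}$ is literally $\empty$, which is extensionally equal to $\neumarral{0}$, the unique element of $\neumarral{1}$; hence $z \subseteq \neumarral{1}$. For the inhabitation property, $z$ is inhabited precisely when there exists some $n$ with $\mathcal{T}(\cd{z}{n})$, i.e.\ $\mathcal{T}(\overline{\mathcal{R}}(n,x,y))$. By Lemma \ref{lem-g-req} this is equivalent to $\somenat{n} \mathcal{R}(n,x,y)$, which by Lemma \ref{lem-g-eq} is equivalent to $x \simeq y$.

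There is no real obstacle here; the work has already been done in Lemmas \ref{lem-g-eq} and \ref{lem-g-req}, which together package the statement ``$x \simeq y$'' as a single existential quantifier over a Meaningful condition. Once that packaging is available, the Kronecker delta is obtained just by indexing copies of $\empty$ by that condition. The only slightly subtle point is that the condition $\overline{\mathcal{R}}(n,x,y)$ has to be Meaningful for every $n$ (not merely for those $n$ that witness $\mathcal{R}$), which is exactly what Lemma \ref{lem-g-req} guarantees.
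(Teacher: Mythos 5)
Your construction is essentially identical to the paper's: the paper defines $\delta_{xy} = \Lambda n.\tuple{\neumarral{0}, \overline{\mathcal{R}}(n,x,y)}$ (with $\neumarral{0}$ in place of your $\empty$, which is the same v-set up to extensional equality) and likewise appeals to Lemmas \ref{lem-g-eq} and \ref{lem-g-req}. Your verification is correct and just spells out the same steps in slightly more detail.
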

\begin{proof} It is given by
$$\delta_{xy} = \Lambda n.\tuple{\neumarral{0}, \overline{\mathcal{R}}(n,x,y)}$$
It follows from Lemmas \ref{lem-g-eq} and \ref{lem-g-req} that $\neumarral{0} \in \delta_{xy} \leftrightarrow x \simeq y$, and clearly $\neumarral{0}$ is the only possible element of $\delta_{xy}$, so $\delta_{xy} \subseteq \neumarral{1}$. \end{proof}

\begin{lem}[Infimum] \label{lem-g-inf} For any v-set $x$ whose elements are all subsets of $\neumarral{1}$, there is a v-set which is a subset of $\neumarral{1}$ and which is inhabited if and only if all the elements of $x$ are inhabited. \end{lem}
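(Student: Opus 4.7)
The plan is to realise the infimum by indexing over potential \formalsys{ECT_V} witnesses. Specifically, I would define
$$\bigsqcap x \;=\; \Lambda e.\tuple{\neumarral{0},\; \Xi(\Lambda n.\cd{x}{n},\, \Lambda n.\cd{\el{x}{n}}{en})}$$
so that the only possible member is $\neumarral{0}$, and the $e^{th}$ condition asserts precisely that ``$e$ is a function which, for each $n\in IND(x)$, returns an index $en$ witnessing a True conditional element of $\el{x}{n}$.''

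The first task is to verify Meaningfulness of each of these $\Xi$-conditions, so that \eqref{eq-vi} gives $\mathcal{V}(\bigsqcap x)$. Since $x$ is a v-set, $\cd{x}{n}$ is always defined and Meaningful. Whenever $\cd{x}{n}$ is True, $\el{x}{n}$ is a v-set, and so (whenever $en$ is defined) $\cd{\el{x}{n}}{en}$ is defined and Meaningful. These are exactly the hypotheses of \eqref{eq-xim}, which therefore delivers $\mathcal{M}$ of the inner condition for every $e$. Trivially $\bigsqcap x \subseteq \neumarral{1}$ since $\neumarral{0}$ is its only possible element.

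Next I would unfold the Truth condition via \eqref{eq-xit} (using that $\cd{x}{n}$ is always defined, and that when $\cd{x}{n}$ is True the definedness of $\cd{\el{x}{n}}{en}$ reduces to definedness of $en$):
$$\mathcal{T}(\Xi(\Lambda n.\cd{x}{n},\,\Lambda n.\cd{\el{x}{n}}{en})) \;\leftrightarrow\; \allin{n}{IND(x)} \defd{en} \wedge \mathcal{T}(\cd{\el{x}{n}}{en}).$$
The antecedent $n\in IND(x)$ is almost-negative, so \formalsys{ECT_V} gives
$$\somenat{e}\mathcal{T}(\Xi(\ldots)) \;\leftrightarrow\; \allin{n}{IND(x)}\somenat{m}\mathcal{T}(\cd{\el{x}{n}}{m}).$$
Since every $\el{x}{n}$ is a subset of $\neumarral{1}$, any conditional element which passes its truth test is automatically $\simeq\neumarral{0}$, so the right-hand side is exactly the statement that each element of $x$ is inhabited. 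Combining, $\neumarral{0}\in\bigsqcap x$ iff every element of $x$ is inhabited, and since $\bigsqcap x\subseteq\neumarral{1}$, being inhabited is equivalent to containing $\neumarral{0}$.

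The only delicate point is the interaction between possible undefinedness of $en$ and the Meaningfulness clause of $\Xi$, but Definition \ref{def-g-xi} was designed precisely to permit $gn$ to be partial while keeping the compound condition Meaningful, so this goes through cleanly. The real content of the lemma is the recognition that, for subsets of $\neumarral{1}$, ``inhabited'' is already in a form with an almost-negative antecedent, so that \formalsys{ECT_V} collapses the universal-existential quantifier alternation into a single existential witness which $\Xi$ can directly encode as a Meaningful condition.
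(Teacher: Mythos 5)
Your construction is correct, but it takes a genuinely different route from the paper. The paper's proof is a two-line reduction to the immediately preceding lemma: by Replacement, form $\tilde{x}$ in which every element of $x$ is replaced by $\neumarral{1}$; since a subset of $\neumarral{1}$ is inhabited iff it is extensionally equal to $\neumarral{1}$, one has $x \simeq \tilde{x}$ iff all elements of $x$ are inhabited, and the Kronecker Delta $\delta_{x\tilde{x}}$ is then the required infimum. That route buys economy (it reuses the $\overline{\mathcal{R}}$ machinery already packaged in Lemma \ref{lem-g-kron}) and it is what licenses the remark in the proof of Theorem \ref{thm-g-sep} that Infimum is actually redundant given Extensionality, Replacement and Kronecker Delta. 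Your direct construction $\Lambda e.\tuple{\neumarral{0},\Xi(\Lambda n.\cd{x}{n},\Lambda n.\cd{\el{x}{n}}{en})}$ instead inlines the underlying mechanism: the Meaningfulness check via \eqref{eq-xim} (using that $\mathcal{T}(\cd{x}{n})$ implies $\mathcal{V}(\el{x}{n})$, hence Meaningfulness of $\cd{\el{x}{n}}{en}$ whenever $en$ is defined), the unfolding via \eqref{eq-xit}, and the collapse of $\allin{n}{IND(x)}\somenat{m}$ into a single $\somenat{e}\allin{n}{IND(x)}$ by \formalsys{ECT_V} are all sound, and the partiality of $\Lambda n.\cd{\el{x}{n}}{en}$ is exactly what Definition \ref{def-g-xi} tolerates. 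What your version buys is generality: it never uses the hypothesis that the elements of $x$ are subsets of $\neumarral{1}$ (inhabitedness of $\el{x}{n}$ is just $\somenat{m}\mathcal{T}(\cd{\el{x}{n}}{m})$ regardless), whereas the paper's argument needs that hypothesis to turn ``all elements inhabited'' into the single equation $x \simeq \tilde{x}$. The cost is that you re-derive by hand a special case of what Lemmas \ref{lem-g-eq} and \ref{lem-g-req} already encapsulate.
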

\begin{proof} A subset of $\neumarral{1}$ is inhabited if and only if it is extensionally equal to $\neumarral{1}$. Use Replacement to form a v-set $\tilde{x}$ in which all the elements of $x$ are replaced by $\neumarral{1}$. Then $x \simeq \tilde{x}$ if and only if all elements of $x$ are inhabited. So, using the previous lemma, $\delta_{x \tilde{x}}$ is the required infimum. \end{proof}

\begin{thm}[Bounded Separation] \label{thm-g-sep} Suppose $y$ is a v-set, and $\phi$ is a unary predicate which can be expressed in the language of first-order set theory with only bounded quantifiers (that is, the quantifiers $\somein{x}{y}$ and $\allin{x}{y}$). Then there is a v-set containing all and only the elements of $y$ which satisfy $\phi$. \end{thm}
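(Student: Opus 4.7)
The plan is to show by induction on the structure of the bounded formula $\phi(z)$ (uniformly in any additional free variables) that for each v-set $a$ there is a v-set $[\phi(a)]$ which is a subset of $\neumarral{1}$ and which is inhabited if and only if $\phi(a)$ holds. The separation v-set will then be built by conditionally including each $\el{y}{n_L}$ whenever $\cd{y}{n_L}$ is True and $[\phi(\el{y}{n_L})]$ is inhabited.

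The atomic case $\bot$ is handled by $\empty$. The atomic case $x \simeq z$ is handled by the Kronecker Delta $\delta_{xz}$ of Lemma \ref{lem-g-kron}; the atomic case $x \in z$ unfolds to the bounded existential $\somein{z'}{z} x \simeq z'$ and so falls under a recursive case. Conjunction and bounded universal quantification assemble a v-set of truth-values (via Pairing and Replacement, respectively) and then invoke the Infimum Lemma \ref{lem-g-inf}. Disjunction and bounded existential quantification similarly assemble a v-set of truth-values and take its Union, since the union of subsets of $\neumarral{1}$ is a subset of $\neumarral{1}$ which is inhabited iff at least one of them is.

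The main obstacle is implication. Given v-set subsets $P = [\phi]$ and $Q = [\psi]$ of $\neumarral{1}$, set
$$[\phi \to \psi] = \Lambda e.\tuple{\neumarral{0}, \Xi(\Lambda n.\cd{P}{n}, \Lambda n.\cd{Q}{en})}.$$
Each such $\Xi$-condition is Meaningful by \eqref{eq-xim}, since $\cd{P}{n}$ is Meaningful and $\cd{Q}{en}$ is Meaningful wherever $en$ is defined; so $[\phi \to \psi]$ is a v-set subset of $\neumarral{1}$. By \eqref{eq-xit} the $e$-th condition is True iff $\allnat{n}\mathcal{T}(\cd{P}{n}) \to \defd{en} \wedge \mathcal{T}(\cd{Q}{en})$, so $\neumarral{0} \in [\phi \to \psi]$ iff $\somenat{e}\allnat{n}\mathcal{T}(\cd{P}{n}) \to \defd{en} \wedge \mathcal{T}(\cd{Q}{en})$. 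Since $\mathcal{T}$ is almost-negative, \formalsys{ECT_V} makes this equivalent to $(\somenat{n}\mathcal{T}(\cd{P}{n})) \to (\somenat{m}\mathcal{T}(\cd{Q}{m}))$, i.e.\ $\neumarral{0} \in P \to \neumarral{0} \in Q$, which is exactly $\phi \to \psi$.

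With $[\phi(a)]$ in hand, the separation v-set is
$$\mathbf{D}\Lambda n.\tuple{\el{y}{n_L}, \Pi(\cd{y}{n_L}, \cd{[\phi(\el{y}{n_L})]}{n_R})}.$$
Lemma \ref{lem-g-part} applies because whenever $\cd{y}{n_L}$ is True, $\el{y}{n_L}$ is a v-set and therefore $[\phi(\el{y}{n_L})]$ is a v-set whose conditions are defined and Meaningful; the asymmetric Meaning clause for $\Pi$ is exactly what accommodates this dependency. The resulting elements are precisely the $\el{y}{n_L}$ for which $\cd{y}{n_L}$ is True and some condition of $[\phi(\el{y}{n_L})]$ is True, i.e.\ the elements of $y$ satisfying $\phi$.
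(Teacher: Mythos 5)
Your proof is correct, and it is in substance the derivation that the paper merely cites: the paper's own proof of Theorem \ref{thm-g-sep} is a one-line appeal to the standard fact that Bounded Separation follows from Extensionality, Empty Set, Pairing, Union, Replacement, Kronecker Delta (Lemma \ref{lem-g-kron}) and Infimum (Lemma \ref{lem-g-inf}), so you are essentially filling in that citation with the truth-value induction. The one place you genuinely diverge is the implication case. The cited derivation stays entirely inside the already-established set-theoretic principles: given $P=[\phi]$ and $Q=[\psi]$, Replacement forms the (possibly empty) constant family $\setb{Q}{z \in P}$, and Lemma \ref{lem-g-inf} applied to it yields a subset of $\neumarral{1}$ inhabited iff every member of that family is inhabited, i.e.\ iff $P$ being inhabited implies $Q$ is --- exactly $\phi \to \psi$, with no new appeal to the condition machinery. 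You instead drop back down to the level of conditions and build $\Lambda e.\tuple{\neumarral{0},\Xi(\Lambda n.\cd{P}{n},\Lambda n.\cd{Q}{en})}$, verifying Meaningfulness by \eqref{eq-xim}, Truth by \eqref{eq-xit}, and the backward equivalence by \formalsys{ECT_V}; this checks out (it is the same pattern by which Lemma \ref{lem-g-req} internalises $\mathcal{R}$), but it re-proves by hand what Replacement plus Infimum already give for free, and it ties the implication case to \formalsys{ECT_V} where the standard route does not need it at that point. Your final assembly via $\mathbf{D}$ and the asymmetric $\Pi$ clause is exactly the pattern of the Union construction (Theorem \ref{thm-g-un}) and is fine.
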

\begin{proof} It can be shown that this follows from Extensionality, Empty Set, Pairing, Union, Replacement, Kronecker Delta, and Infimum \cite[\S 3.3]{AR}. (And in fact Infimum follows from the others, as shown above.) \end{proof}

\begin{thm}[Exponentiation] For all v-sets $x$ and $y$, there is a v-set ${}^xy$ of all functions from $x$ to $y$.\end{thm}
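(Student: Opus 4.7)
The plan is to derive Exponentiation from the already-established Subset Collection (Theorem \ref{thm-g-scoll}) and Bounded Separation (Theorem \ref{thm-g-sep}), via the standard \formalsys{CZF} route \cite[\S 3.3]{AR}. First I would construct the Cartesian product $x \times y$ as a v-set using Pairing (Theorem \ref{thm-g-pair}), Union (Theorem \ref{thm-g-un}), and Replacement (Corollary \ref{cor-g-repl}), so that its elements are the Kuratowski pairs $\stuple{a,b}$ with $a \in x$ and $b \in y$.

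Next, apply Subset Collection with source $x$, target $x \times y$, and the ternary relation $\psi(a,p,c) \iff p \in c \wedge (\somein{b}{y} p \simeq \stuple{a,b})$. This produces a v-set $z$ with the subset-collecting property. Now fix any v-set $c$ which is a function from $x$ to $y$. Then $c$ is a subset of $x \times y$, and the totality hypothesis $\allin{a}{x}\somein{p}{x \times y}\psi(a,p,c)$ is satisfied, witnessed by $p = \stuple{a,c(a)}$. Subset Collection therefore provides some $w \in z$ with $w \subseteq x \times y$ such that each $a \in x$ has a $p \in w$ with $p \in c$ of the form $\stuple{a,b}$, and conversely every $p \in w$ is such a pair in $c$. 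Since $c$ is single-valued, the forward clause forces $w$ to contain, for each $a \in x$, the unique pair $\stuple{a,c(a)}$; combined with $w \subseteq c$ from the backward clause, this yields $w \simeq c$, so $c \in z$. Hence $z$ contains every function from $x$ to $y$.

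Finally, apply Bounded Separation to $z$ with the predicate asserting ``$w$ is a function from $x$ to $y$''---that is, that every element of $w$ is a Kuratowski pair whose first coordinate lies in $x$ and second in $y$, that for each $a \in x$ some such pair lies in $w$, and that the second coordinate is determined by the first. All of these quantifiers range over $w$, $x$, or $y$, so the predicate is bounded, and Separation yields the desired ${}^xy$. The main obstacle is the unpacking in the middle step: one must verify that the $w$ supplied by Subset Collection is not merely a multi-valued approximation of $c$ but is extensionally $c$ itself, which rests on the single-valuedness of $c$ and the symmetric formulation of the forward and backward conditions in Theorem \ref{thm-g-scoll}.
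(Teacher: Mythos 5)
Your proposal is correct and follows essentially the same route as the paper: Subset Collection applied to $x$ and $x \times y$ with the relation $\psi(a,b,c) \iff b \in c \wedge \someset{d}\, b \simeq \stuple{a,d}$ yields a superset of ${}^xy$, and Bounded Separation then extracts exactly the functions. The paper's proof is terser---it leaves the verification that the collected set $w$ equals $c$ itself as an assertion---but your unpacking of that step via single-valuedness is exactly the intended argument.
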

\begin{proof} Define the relation $\psi$ as
$$\psi(a,b,c) \iff b \in c \wedge \someset{d} b\simeq\stuple{a,d}$$
If $c$ is a function from $x$ to $y$, then every element of $x$ is related to exactly one element of $x \times y$ by $\psi_c$, and the collection of all such elements of $x \times y$ is $c$ itself. Using Subset Collection it follows that there is a v-set which is a superset of ${}^xy$. The property of being a function from $x$ to $y$ can be expressed by a bounded formula, therefore Bounded Separation applies to get exactly ${}^xy$. \end{proof}

\section{Six impossible theorems before breakfast}

The results from the previous sections are classically and constructively valid. They are all theorems of \formalsys{CZF+RDC+EP}, which in turn is a sub-theory of \formalsys{ZFC}. Conversely, they axiomatise \formalsys{CZF+RDC+EP}. As remarked in section 1, adding the principle of the Excluded Middle has the following effect: \formalsys{CZF+EM=ZF}, \formalsys{CZF+RDC+EM=ZF+DC}, \formalsys{CZF+RDC+EP+EM=ZF+EP}; not much is known about the latter, other than that \formalsys{ZF+DC < ZF+EP \le ZFC}.

Though the results are valid, the proofs of these results are, in sections 4 and 5, classically invalid, because \formalsys{ECT} contradicts \formalsys{EM}. \formalsys{ECT} should be expected to have set-theoretic consequences that are incompatible with \formalsys{ZFC}. In this section, a few basic ones are mentioned. They are all fairly straightforward consequences of the first theorem, \formalsys{ESP}. As will be shown in the following section, \formalsys{ESP} is conservative over \formalsys{CZF+RDC}. This is in contrast with \formalsys{EM}, which is highly non-conservative in this setting: \formalsys{ZF} proves the consistency of \formalsys{CZF}, indeed, \formalsys{CZF} is provably consistent, by transfinite induction up to the Bachmann-Howard ordinal, while \formalsys{ZF} is far beyond any system for which a constructive consistency proof is known \cite{PML2}.

\begin{thm}[Enough Subcountable Projectives] \label{thm-g-esp} Every v-set is the image of a v-subset of $\omega$ which is also projective.\end{thm}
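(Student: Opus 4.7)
The plan is to reuse the construction from the proof of Theorem \ref{thm-g-ep} almost verbatim, and simply observe that the projective $y$ that was built there already happens to be (extensionally) a subset of $\omega$. Recall that given a v-set $x$, the proof set
$$y = \Lambda n.\tuple{\neumarral{n},\cd{x}{n}}, \qquad f = \Lambda n.\tuple{\stuple{\neumarral{n},\el{x}{n}},\cd{x}{n}},$$
and showed that $f$ is a set-theoretic surjection from $y$ onto $x$, and that $y$ is projective (via \formalsys{ECT_V} and Lemma \ref{lem-g-part}).

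So first I would recall that construction and quote the projectivity and surjection parts from Theorem \ref{thm-g-ep} without redoing them. The only new thing to check is that $y\subseteq\omega$. The intensional elements of $y$ are exactly the $\neumarral{n}$ for which $\cd{x}{n}$ is True; each such $\neumarral{n}$ is by construction an intensional element of $\omega=\Lambda n.\tuple{fn,\top}$ (where $fn=\neumarral{n}$), whose conditions are all $\top$ and hence True. So every element of $EL(y)$ is equal (intensionally, in fact) to an element of $EL(\omega)$, which gives $y\subseteq\omega$ in the sense of the interpretation's $\subseteq$ (bounded quantification plus $\in$, as unpacked in the proof of Theorem \ref{thm-g-pred}).

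The main obstacle, if any, is purely bookkeeping: checking that the notion of ``v-subset of $\omega$'' under this interpretation matches what one wants, i.e.\ that witnessing $y\subseteq\omega$ by pointing to the $\neumarral{n}$'s at both ends really does give a bounded formula satisfied by $y$. Since $\neumarral{n}\simeq\neumarral{n}$ (reflexivity of $\simeq$), and since $\cd{\omega}{n}=\top$ is True for every $n$, this is immediate. No new appeal to \formalsys{ECT_V} beyond what Theorem \ref{thm-g-ep} already used is needed; the corollary is essentially that the projective supplied there can be taken subcountable ``for free''.
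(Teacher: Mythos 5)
Your proposal is correct and is essentially identical to the paper's own (one-line) proof, which simply observes that the projective $y = \Lambda n.\tuple{\neumarral{n},\cd{x}{n}}$ constructed in Theorem \ref{thm-g-ep} is already a subset of $\omega$. Your additional verification that each $\neumarral{n}$ in $EL(y)$ matches an element of $EL(\omega)$ is just the bookkeeping the paper leaves implicit.
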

\begin{proof} Actually, in the construction of Theorem \ref{thm-g-ep}, the projective v-set was already a subset of $\omega$. \end{proof}

\begin{cor}[Subcountability] \label{cor-g-sub} Every v-set is subcountable, that is, the image of a v-subset of $\omega$.\end{cor}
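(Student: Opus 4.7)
The corollary is immediate from Theorem \ref{thm-g-esp}, so the plan is essentially just to invoke that theorem and drop the extra projectivity clause. Concretely, given any v-set $x$, Theorem \ref{thm-g-esp} supplies a v-set $y$ together with a surjection from $y$ onto $x$, and moreover guarantees that $y$ is a v-subset of $\omega$ (and that $y$ is projective, but we do not need this). Reading off just the first two conclusions gives exactly the statement that $x$ is the image of a v-subset of $\omega$, which is what subcountability means.

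The only thing worth re-checking is that the witnessing v-set $y$ in the proof of Theorem \ref{thm-g-ep} really is a subset of $\omega$, since Theorem \ref{thm-g-esp} itself just appealed to ``the construction'' without re-exhibiting it. Looking back at the construction, $y = \Lambda n.\tuple{\neumarral{n},\cd{x}{n}}$, so every conditional element of $y$ is of the form $\neumarral{n}$, which is an element of $\omega = \Lambda n.\tuple{\neumarral{n},\top}$. Hence $EL(y) \subseteq EL(\omega)$ up to extensional equality, so $y \subseteq \omega$ as v-sets. The accompanying $f$ is a set-theoretic surjection from $y$ onto $x$, so $x$ is the image of $y$ in the set-theoretic sense, completing the argument.

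There is no real obstacle here; the statement is genuinely a corollary in the strict sense, and a one-line proof of the form ``Immediate from Theorem \ref{thm-g-esp}, forgetting the projectivity clause'' suffices.
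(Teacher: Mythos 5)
Your proof is correct and matches the paper's treatment exactly: the paper states this as an immediate corollary of Theorem \ref{thm-g-esp} with no separate argument, obtained by simply discarding the projectivity clause. Your extra check that the witness $y = \Lambda n.\tuple{\neumarral{n},\cd{x}{n}}$ from Theorem \ref{thm-g-ep} is indeed a subset of $\omega$ is exactly the observation the paper makes in the proof of Theorem \ref{thm-g-esp} itself.
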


\begin{thm}\label{thm-g-nopow}The Power Set principle is false. In particular, $\pow{\omega}$, the class of all v-subsets of $\omega$, is a proper class. \end{thm}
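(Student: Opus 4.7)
The plan is a standard Cantor diagonal argument, made to go through in this constructive setting using Subcountability and Bounded Separation. The only nontrivial intuitionistic point is that the diagonal argument as a proof of nonexistence (rather than nonsubcountability) survives, which it does because from $\phi \leftrightarrow \neg\phi$ one derives $\bot$ intuitionistically.

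First I would suppose for contradiction that $\pow{\omega}$ is a v-set. By Corollary \ref{cor-g-sub} it would be subcountable, so there is a v-subset $S$ of $\omega$ and a v-function $f$ from $S$ onto $\pow{\omega}$. Now form the diagonal
$$D = \setb{n \in S}{n \notin f(n)}.$$
The defining predicate uses only the bounded membership and non-membership in $f(n)$ (and $f$ is a v-set whose application to $n \in S$ is a v-set), so Theorem \ref{thm-g-sep} applies and $D$ is a v-set. Since every element of $D$ lies in $S \subseteq \omega$, we have $D \in \pow{\omega}$.

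By surjectivity of $f$ there is some $m \in S$ with $f(m) \simeq D$. Then, unwinding the definition of $D$, we have $m \in D \leftrightarrow (m \in S \wedge m \notin f(m))$, which, since $m \in S$ and $f(m) \simeq D$, simplifies to $m \in D \leftrightarrow m \notin D$. From this biconditional, intuitionistic logic derives $\bot$, contradicting the assumption that $\pow{\omega}$ is a v-set.

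The only step that requires care is checking that the diagonal predicate $n \notin f(n)$ is genuinely a bounded formula over the v-set $S$, so that Bounded Separation applies; here this is fine because $f(n) \in \pow{\omega}$ is itself a v-set and membership $n \in f(n)$ is bounded (indeed $n$ ranges over $S \subseteq \omega$ and $f(n)$ is a fixed v-set for each such $n$). With that verified, the rest is the classical diagonal argument, and no use of \formalsys{ECT_V} is needed beyond what was already absorbed into Subcountability and Bounded Separation.
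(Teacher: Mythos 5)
Your proof is correct and is essentially identical to the paper's: both assume $\pow{\omega}$ is a v-set, invoke Subcountability to get a surjection $f$ from a subset of $\omega$, form the Cantor diagonal set by Bounded Separation, and derive the intuitionistically contradictory biconditional $m \in D \leftrightarrow m \notin D$ from surjectivity. No differences worth noting.
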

\begin{proof} This is an adaptation of Cantor's diagonal theorem. Suppose $\pow{\omega}$ were a v-set. By the above, it would be subcountable. There would be an $x \subseteq \omega$ and a surjective $f: x \twoheadrightarrow \mathbb{P}(\omega)$. The ``Cantor diagonal set'' $\mathbf{C} = \setb{y \in x}{y \not\in f(y)}$ would be a v-set, by Bounded Separation, and $\mathbf{C} \subseteq x \subseteq \omega$. So by $f$'s surjectivity, $\mathbf{C} \simeq f(z)$ for some $z \in x$, and it would follow that $z \in \mathbf{C} \leftrightarrow z \not\in \mathbf{C}$. Contradiction. \end{proof}
\begin{rem} On the other hand, ${}^\omega\omega$ is a v-set, by Exponentiation. It can be shown, by a different adaptation of Cantor's diagonal theorem, that ${}^\omega\omega$ is not countable. But it is subcountable. \end{rem}

\begin{cor}\label{cor-g-p1}Even $\p1$ is a proper class.\end{cor}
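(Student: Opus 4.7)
The plan is to reduce this to Theorem \ref{thm-g-nopow} by showing that if $\mathbb{P}(\neumarral{1})$ were a v-set, then $\pow{\omega}$ would be a v-set too. The key observation is that v-subsets of $\omega$ are in bijective correspondence with functions from $\omega$ into $\pow{\neumarral{1}}$: to each $A \subseteq \omega$ associate the function $f_A(n) = \setb{x \in \neumarral{1}}{\neumarral{n} \in A}$, which is a v-subset of $\neumarral{1}$ by Bounded Separation; conversely, to each $g : \omega \to \pow{\neumarral{1}}$ associate $A_g = \setb{n \in \omega}{\neumarral{0} \in g(n)}$, again a v-set by Bounded Separation.

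First I would suppose for contradiction that $\pow{\neumarral{1}}$ is a v-set. Then by Exponentiation (established earlier), the v-set of functions ${}^\omega \pow{\neumarral{1}}$ exists. Next I would apply Replacement (Corollary \ref{cor-g-repl}) to the functional relation $g \mapsto A_g$ defined above, which assigns to each $g \in {}^\omega \pow{\neumarral{1}}$ a unique v-subset of $\omega$. The image would then be a v-set. Finally, I would verify that this image is all of $\pow{\omega}$: given any v-subset $A \subseteq \omega$, the function $f_A$ lies in ${}^\omega \pow{\neumarral{1}}$, and $A_{f_A} \simeq A$ by Extensionality (since $\neumarral{0} \in f_A(n) \leftrightarrow \neumarral{n} \in A$). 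This contradicts Theorem \ref{thm-g-nopow}.

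The only delicate step is checking that the correspondence is expressible in a way that Exponentiation and Replacement can be applied; in particular, one has to confirm that the defining formula $\neumarral{n} \in A$ (resp.\ $\neumarral{0} \in g(n)$) is bounded, so that the subsets $f_A(n)$ and $A_g$ really are v-sets. This is immediate from Bounded Separation since membership in $A$ or in $g(n)$ is a bounded formula over a v-set. Once these basic closure checks are in place, the contradiction with Theorem \ref{thm-g-nopow} closes the argument and establishes that $\pow{\neumarral{1}}$ is a proper class.
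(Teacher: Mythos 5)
Your proposal is correct and is essentially the paper's own argument: assume $\p1$ is a v-set, form ${}^\omega\p1$ by Exponentiation, use Bounded Separation and Replacement to recover $\pow{\omega}$ as the image of $g \mapsto \setb{n\in\omega}{\neumarral{0}\in g(n)}$, and contradict Theorem \ref{thm-g-nopow}. The only cosmetic difference is that the paper states the isomorphism $\mathbb{P}(x)\cong{}^x\p1$ for general $x$ before specialising to $x=\omega$.
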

\begin{proof} Suppose $\p1$ were a v-set. Then for any v-set $x$, ${}^x\p1$ would be a v-set, by Exponentiation. But, ${}^x\p1$ is isomorphic to $\mathbb{P}(x)$. Specifically $\mathbb{P}(x) = \setb{\setb{y \in x}{\neumarral{0} \in f(y)}}{f \in {}^x\p1}$; the inner set is a v-set by Bounded Separation, and then $\mathbb{P}(x)$ would be a v-set by Replacement. But $\omega$ is a v-set and $\mathbb{P}(\omega)$ is not, so $\p1$ is not either. \end{proof}
\begin{rem} A proof giving a different perspective is that if $\p1$ were a v-set, then ``Meaningful'' would be Meaningful. That is, from a v-set $x$ and a witness that $x$ has the properties of $\p1$, a $\mu$ which violates Theorem \ref{thm-g-mnm} can be constructed. This construction is omitted here. \end{rem}

\begin{thm}\label{thm-g-noac}The unrestricted principle of Choice is false.\end{thm}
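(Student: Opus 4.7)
The plan is to derive a contradiction from full Choice together with Subcountability (Corollary \ref{cor-g-sub}). Given any v-set $x$, Subcountability provides a v-subset $s \subseteq \omega$ and a surjection $f : s \twoheadrightarrow x$. The map sending each $a \in x$ to the preimage $\{n \in s : f(n) \simeq a\}$ yields an $x$-indexed family of inhabited v-sets, well-defined as a v-set using Bounded Separation (Theorem \ref{thm-g-sep}) on $s$ and Replacement (Corollary \ref{cor-g-repl}) along $x$. Unrestricted Choice applied to this family returns a section $c : x \to s$ of $f$, which is in particular an injection of $x$ into $\omega$. Thus, under full Choice, every v-set would be countable.

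The contradiction comes from ${}^\omega\omega$, which is a v-set by Exponentiation and was noted in the remark after Theorem \ref{thm-g-nopow} to be uncountable. The uncountability is a direct Cantor diagonal, which I would include for completeness: given any proposed enumeration $g : \omega \to {}^\omega\omega$, the function $h(n) = g(n)(n) + 1$ lies in ${}^\omega\omega$ and satisfies $h(n) \ne g(n)(n)$ for every $n$, hence $h$ differs from every $g(n)$, so $g$ is not a surjection. Combined with the previous paragraph, full Choice would make ${}^\omega\omega$ countable --- contradiction.

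The only delicate step is formalising the family $\{f^{-1}\{a\}\}_{a \in x}$ as the input to Choice: the preimage relation is a bounded-formula predicate on $s$, so Bounded Separation produces each preimage as a v-set, and then Replacement assembles them into a v-set-indexed family of inhabited v-sets, which is exactly the shape required by the statement of unrestricted Choice. Everything else is immediate from results already established. An alternative route, via Diaconescu's theorem followed by the fact that Excluded Middle plus Exponentiation would make $\pow{\omega}$ a v-set (contradicting Theorem \ref{thm-g-nopow}), is available, but the direct argument via subcountability and ${}^\omega\omega$ is shorter and stays within the infrastructure already built here.
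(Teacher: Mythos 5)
Your main argument has a constructive gap at its final step. From Subcountability and Choice you correctly obtain a section $c : x \to s$ of the surjection $f : s \twoheadrightarrow x$, hence an injection of $x$ into $\omega$. But you then assert that $x$ ``would be countable'' and apply a diagonal argument that refutes the existence of a \emph{surjection} $g : \omega \twoheadrightarrow {}^\omega\omega$. An injection ${}^\omega\omega \hookrightarrow \omega$, or equivalently a section of a partial surjection defined on $s \subseteq \omega$, does not constructively yield such a $g$: to extend $f$ (or $c^{-1}$) from $s$ to all of $\omega$ you would have to decide membership in $s$, and that is exactly what fails in this interpretation. The paper is explicit that ${}^\omega\omega$ \emph{is} subcountable here (remark after Theorem \ref{thm-g-nopow}), so the partial surjection $f$ is harmless on its own; the only thing Choice adds is the section, and you have not derived a contradiction from the section. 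The missing inference --- ``retract of a subset of $\omega$, hence enumerable by $\omega$'' --- is an instance of excluded middle, which is precisely what Diaconescu's construction is designed to extract from Choice. So the ``shorter direct route'' silently presupposes the classical step it was meant to bypass, and as written the proof does not close. (Your formalisation of the family $\{f^{-1}\{a\}\}_{a\in x}$ via Bounded Separation and Replacement, and the diagonal showing ${}^\omega\omega$ is not countable, are both fine; the defect is only in connecting the two.)

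The paper's own proof is the Diaconescu adaptation you relegate to a footnote: for an arbitrary $x \subseteq \neumarral{1}$ it builds the two sets $A$ and $B$ by Bounded Separation, applies Choice to $\{A,B\}$, and extracts $\neumarral{0} \in x \vee \neumarral{0} \not\in x$, whence $\p1 \simeq \neumarral{2}$; this contradicts Corollary \ref{cor-g-p1}, which says $\p1$ is a proper class. If you want to keep a cardinality-flavoured conclusion, you could still run Diaconescu first to get excluded middle and only then invoke your subcountability argument, but at that point the detour buys nothing over the paper's direct contradiction with $\p1$.
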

\begin{proof} This is an adaptation of Diaconescu's result \cite{RD}. Given any $x \subseteq \neumarral{1}$, define:\begin{align*}
A &= \setb{n\in\neumarral{2}}{n \simeq \neumarral{0} \vee (n \simeq \neumarral{1} \wedge \neumarral{0} \in x)} \\
B &= \setb{n\in\neumarral{2}}{n \simeq \neumarral{1} \vee (n \simeq \neumarral{0} \wedge \neumarral{0} \in x)}
\end{align*} and these are v-sets by Bounded Separation. Applying the principle of Choice to $\{A,B\}$, there would be a choice function $f$ such that $f(A) \in A \wedge f(B) \in B$. By definition of these two v-sets, $(f(A) \simeq \neumarral{0} \vee (f(A) \simeq \neumarral{1} \wedge \neumarral{0} \in x)) \wedge (f(B) \simeq \neumarral{1} \vee (f(B) \simeq \neumarral{0} \wedge \neumarral{0} \in x))$, from which it would follow that $f(A) \not\simeq f(B) \vee \neumarral{0} \in x$. But if $\neumarral{0} \in x$ then by the definitions, $A \simeq B$, and by Extensionality, $f(A) \simeq f(B)$. So $f(A) \not\simeq f(B) \to \neumarral{0} \not\in x$, and as a result $\neumarral{0} \not\in x \vee \neumarral{0} \in x$. So $x \simeq \neumarral{0} \vee x \simeq \neumarral{1}$, but $x$ was an arbitrary element of $\p1$. So, $\p1 \simeq \{\neumarral{0},\neumarral{1}\} \simeq \neumarral{2}$. But $\neumarral{2}$ is a v-set, while $\p1$ is not. Contradiction. \end{proof}

\begin{thm}\label{thm-g-noreg}The Foundation principle is false: not all inhabited v-sets are disjoint from one of their elements.\end{thm}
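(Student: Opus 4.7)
My plan is to show Foundation would imply excluded middle for $\Sigma_1$ sentences of arithmetic, which contradicts \formalsys{ECT_V}.

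Given any $\Sigma_1$ sentence $P \equiv \somenat{m} R(m)$ (with $R$ decidable), I would construct a v-set $A_P$ whose intensional slots are indexed by Cantor pairs $\tuple{n_L,n_R}$: one slot at $n_L = 0$ unconditionally containing $\neumarral{1}$, and for each $m$ a slot at $n_L = 1$, $n_R = m$ containing $\neumarral{0}$ guarded by $\top$ or $\bot$ according to whether $R(m)$ holds, with all other slots guarded by $\bot$. Then $\neumarral{1} \in A_P$ unconditionally and $\neumarral{0} \in A_P$ iff $P$; in particular $A_P$ is inhabited.

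Suppose for contradiction that Foundation holds. Applying it to $A_P$ yields some $y \in A_P$ with $y \cap A_P = \empty$, which unpacks to an index $n \in IND(A_P)$ with $y \simeq \el{A_P}{n}$. By construction $n_L \in \{0,1\}$, a decidable arithmetic alternative. In the case $n_L = 0$, $y \simeq \neumarral{1}$ whose unique element is $\neumarral{0}$; the disjointness then forces $\neumarral{0} \notin A_P$, hence $\neg P$. In the case $n_L = 1$, the guard $R(n_R)$ must be True, hence $P$. Either way $P \vee \neg P$.

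Ranging over $P$, this yields excluded middle for all $\Sigma_1$ sentences of arithmetic. Applied to the self-halting predicate, \formalsys{ECT_V} would then extract a total computable function deciding the halting problem, which is refutable in \formalsys{HA}. The main obstacle is making the case-split on $n_L$ formally legitimate as an arithmetic alternative, and this is exactly what the two-layer Cantor-pair indexing of $A_P$ is designed to accomplish.
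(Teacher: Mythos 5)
Your proof is correct, but it reaches the contradiction by a different route than the paper, and it is worth noting what each buys. The core Foundation step is structurally the same trick: the paper applies Foundation to the Diaconescu-style set $B=\setb{n\in\neumarral{2}}{n \simeq \neumarral{1} \vee (n \simeq \neumarral{0} \wedge \neumarral{0} \in x)}$, which, like your $A_P$, has $\neumarral{1}$ as an unconditional element and $\neumarral{0}$ as a conditional one, and the same two-case analysis of the foundational element yields an instance of excluded middle. The differences are in the instantiation and the endgame. The paper's condition is $\neumarral{0}\in x$ for an arbitrary $x\subseteq\neumarral{1}$, the set $B$ is obtained by Bounded Separation, and the resulting decidability of $\neumarral{0}\in x$ gives $\p1\simeq\neumarral{2}$, contradicting Corollary \ref{cor-g-p1}; this keeps the theorem uniform with the rest of section 6, where everything is traced back to $\p1$ being a proper class, but it makes the proof depend on the section 5 machinery (Bounded Separation, hence $\Xi$) and on Subcountability. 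Your condition is a $\Sigma_1$ arithmetic sentence, $A_P$ is built directly as a v-set using only $\top$, $\bot$ and the decidability of $R$, and the endgame is arithmetic: $\Sigma_1$-excluded-middle plus \formalsys{ECT_0} (the antecedent being trivially almost-negative) yields a computable decision procedure for the self-halting problem, refutable in \formalsys{HA} --- indeed the paper itself records $\neg \allnat{e,i} ((\somenat{u} \mathbf{T}(e,i,u)) \vee (\neg \somenat{u} \mathbf{T}(e,i,u)))$ as a consequence of Church's Thesis in section 12. This is more economical (the refutation of Foundation already goes through with only the trivial assumptions on Meaning of section 3 plus \formalsys{ECT}) and it isolates exactly which fragment of \formalsys{EM} Foundation smuggles in. Your flagged worry about the case split on $n_L$ is handled correctly: from $\somein{n}{IND(A_P)}\; y \simeq \el{A_P}{n}$ and the decidability of $n_L=0$ versus $n_L=1$ (the slots with $n_L\ge 2$ being excluded by their $\bot$ guards), the disjunction of cases follows intuitionistically, so the two-layer indexing does its job.
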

\begin{proof} Suppose all inhabited v-sets were disjoint from one of their elements. Consider again any $x \subseteq \neumarral{1}$ and the corresponding set $B$ defined in the previous proof. It is inhabited and would be disjoint from one of its elements, $n$. By definition of $B$, $n \simeq \neumarral{1}$ or $n \simeq \neumarral{0} \wedge \neumarral{0} \in x$. In the former case, if $B$ were disjoint from $\neumarral{1}$, meaning $\neumarral{0} \not\in B$, then $\neumarral{0} \not\in x$. In the latter case of course $\neumarral{0} \in x$. So again, $\neumarral{0} \not\in x \vee \neumarral{0} \in x$, and $x$ was arbitrary, so $\p1 \simeq \neumarral{2}$. Contradiction. \end{proof}
\begin{rem} As mentioned in section 1, a contrapositive form of Foundation is a theorem: no inhabited v-set intersects all of its elements.\end{rem}

\section{CZF through the looking glass}

The assumptions \eqref{eq-vi} and \eqref{eq-ve} are a standard accessibility definition. The assumptions \eqref{eq-xit} and \eqref{eq-xim} are a special case of inductive-recursive definition: here, the Meaningful sentences are being inductively generated, but this depends on Truth, which is defined recursively at the same time. However, no induction principle corresponding to \eqref{eq-xim} is assumed, so by themselves \eqref{eq-xit} and \eqref{eq-xim} are actually a very weak form of inductive-recursive definition. This is roughly analogous to the simple universe construction in Martin-L\"of type theory \cite{PML}. It could likely be given a predicative justification in the stricter sense of Sch\"utte and Feferman \cite{SF}, though this is not attempted here.

In any case, such definitions are within the power of \formalsys{CZF}. To interpret $\mathcal{M}$ and $\mathcal{T}$ set-theoretically, define the following classes: \begin{align*}
\Phi &= \setb{\stuple{M,a}}{\somein{f,g}{\omega} a \simeq \stuple{\Xi(f,g),Z(f,g,M)} \wedge f \in X(M) \wedge \stuple{f,g} \in Y(M)} \\
X(M) &= \setb{f \in \omega}{\allin{n}{\omega} \defd{fn} \to \someset{h} \stuple{fn,h} \in M} \\
Y(M) &= \setb{\stuple{f,g} \in \omega^2}{\allin{n}{\omega} \defd{fn} \wedge \defd{gn} \wedge (\stuple{fn,\neumarral{1}} \in M) \to \someset{h} \stuple{gn,h} \in M} \\
Z(f,g,M) &= \setb{\neumarral{0}}{\allin{n}{\omega} (\defd{fn} \wedge \stuple{fn,\neumarral{1}} \in M) \to (\defd{gn} \wedge \stuple{gn,\neumarral{1}} \in M)}
\end{align*}
The Class Inductive Definition Theorem can be proven in \formalsys{CZF} \cite[\S 5]{AR}, by which the least $\Phi$-closed class, $M^\infty$, can be formed. ``$\Phi$-closed'' means that if $M$ is a subset of $M^\infty$, and $\stuple{M,a} \in \Phi$, then $a \in M^\infty$; $M^\infty$ is a subclass of any class with this property.

Take $\mathcal{M}(x)$ to mean $\someset{h}\stuple{x,h}\in M^\infty$ and $\mathcal{T}(x)$ to mean $\stuple{x,\neumarral{1}} \in M^\infty$. The interpretation of $\mathcal{V}$ comes easily. Define the classes:\begin{align*}
\Psi &= \setb{\stuple{V,x}}{x \in P \cap Q(V)} \\
P &= \setb{x \in \omega}{\allin{n}{\omega} \defd{xn} \wedge \someset{h} \stuple{\cd{x}{n},h} \in M^\infty} \\
Q(V) &= \setb{x \in \omega}{\allin{n}{\omega} \defd{xn} \wedge (\stuple{\cd{x}{n},\neumarral{1}} \in M^\infty \to \el{x}{n} \in V)}
\end{align*}
Form the least $\Psi$-closed class, $V^\infty$. Take $\mathcal{V}(x)$ to mean $x \in V^\infty$.

\begin{lem}\label{lem-c-con}$M^\infty$ is coherent. That is, if there are $x$, $h$, and $h'$ such that $\stuple{x,h} \in M^\infty$ and $\stuple{x,h'} \in M^\infty$, then $h \simeq h'$.\end{lem}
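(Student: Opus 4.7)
The plan is to apply the induction principle accompanying the Class Inductive Definition Theorem to the class
\[ A = \{\stuple{x,h} \in M^\infty : \allnat{h'} \stuple{x,h'} \in M^\infty \to h \simeq h'\}. \]
If $A$ is shown to be $\Phi$-closed, then $A = M^\infty$ by minimality, which yields coherence — every element of $M^\infty$ has the form $\stuple{x,h}$, since $\Phi$ produces only such pairs.

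To verify $\Phi$-closure, take $N \subseteq A$ with $\stuple{N,a} \in \Phi$, so $a \simeq \stuple{\Xi(f,g),Z(f,g,N)}$ for some $f \in X(N)$ and $\stuple{f,g} \in Y(N)$. Since $N \subseteq A \subseteq M^\infty$, $\Phi$-closure of $M^\infty$ gives $a \in M^\infty$. Now suppose $\stuple{x,h'} \in M^\infty$ with $x \simeq \Xi(f,g)$; I must show $h' \simeq Z(f,g,N)$. A routine auxiliary induction — on the predicate ``$z$ is $\Phi$-generated from some $M \subseteq M^\infty$'' — establishes that every element of $M^\infty$ arises from some such $M$. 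So $\stuple{x,h'}$ comes from some $M' \subseteq M^\infty$ with $x \simeq \Xi(f',g')$, $h' \simeq Z(f',g',M')$, $f' \in X(M')$, and $\stuple{f',g'} \in Y(M')$. Since $\Xi$ is an injective number-theoretic combinator, $\Xi(f,g) \simeq \Xi(f',g')$ forces $f = f'$ and $g = g'$.

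Next I would show $Z(f,g,N) \simeq Z(f,g,M')$. Both are subsets of $\neumarral{1}$, so extensional equality reduces to agreement on inhabitedness, i.e.\ on the universal implication that for all $n$, $\defd{fn} \wedge \stuple{fn,\neumarral{1}} \in \cdot$ implies $\defd{gn} \wedge \stuple{gn,\neumarral{1}} \in \cdot$. The bridge is the inductive hypothesis $N \subseteq A$: for each $n$ with $\defd{fn}$, $f \in X(N)$ gives some $h_n$ with $\stuple{fn,h_n} \in N$, and $f \in X(M')$ gives some $h'_n$ with $\stuple{fn,h'_n} \in M'$; since $\stuple{fn,h_n} \in N \subseteq A$, the defining property of $A$ forces $h_n \simeq h'_n$. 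Therefore $\stuple{fn,\neumarral{1}} \in N \iff h_n \simeq \neumarral{1} \iff \stuple{fn,\neumarral{1}} \in M'$. An entirely analogous argument, using $\stuple{f,g} \in Y(N) \cap Y(M')$ for those $n$ where $fn$ is true, handles the $gn$-side. The two inhabitedness conditions are equivalent, so the two $Z$'s coincide, giving $h \simeq h'$ and thus $a \in A$.

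The main obstacle is that the second generating family $M'$ is only known to sit inside $M^\infty$, not inside $A$, so coherence of truth values along $M'$ cannot be invoked directly; instead the induction hypothesis must be transported from $N$ via the $X$- and $Y$-conditions. This is precisely what the asymmetric Meaning requirement on $\Xi$ (with $g$ only queried where $f$ is true) was designed to allow: $f$ is recorded in both $X(N)$ and $X(M')$ unconditionally, while $g$ is recorded in $Y(N)$ and $Y(M')$ only for the indices that actually matter for inhabitedness of $Z$. The two minor facts to record along the way are the injectivity of $\Xi$ as a code, and the standard ``every $z \in M^\infty$ is $\Phi$-generated'' principle, which is itself a one-line induction.
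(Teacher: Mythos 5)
Your proof is correct and follows essentially the same route as the paper: define the coherent subclass of $M^\infty$, show it is $\Phi$-closed by transporting the uniqueness of the witnesses $\stuple{fn,j}$ and $\stuple{gn,k}$ from the first generating family to the second via the $X$- and $Y$-conditions, conclude $Z(f,g,N) \simeq Z(f,g,M')$, and finish by minimality. The only difference is that you make explicit two facts the paper leaves tacit (injectivity of the numerical code $\Xi(f,g)$ and the principle that every element of $M^\infty$ is $\Phi$-generated from some subset of $M^\infty$), which is a harmless refinement.
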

\begin{proof} Define the coherent subclass of $M^\infty$ as follows:
$$M' = \setb{\stuple{x,h} \in M^\infty}{\allin{h'}{\p1} \stuple{x,h'} \in M^\infty \to h \simeq h'}$$
$M'$ will in fact be closed under the same operator $\Phi$ that defined $M^\infty$ above. To see this, suppose $M$ is a subset of $M'$, and $a$ is a set such that $\stuple{M,a} \in \Phi$. Then, by definition of $\Phi$, there are $f,g \in \omega$ such that $a \simeq \stuple{\Xi(f,g),Z(f,g,M)}$. $Z(f,g,M)$ is a valid set because $M$ is, and $Z$ is otherwise given by a bounded formula. Furthermore, $f \in X(M)$, so for all $n$, if $fn$ is defined then there is a $j \in \p1$ such that $\stuple{fn,j} \in M$. $M$ is a subset of $M'$, so by definition of $M'$, this $j$ is unique (up to extensional equality). Similarly $\stuple{f,g} \in Y(M)$ so for all $n$, if $fn$ and $gn$ are defined and $\stuple{fn,\neumarral{1}} \in M$ then there is a unique $k \in \p1$ such that $\stuple{gn,k} \in M$. 

Now $a \in M^\infty$ since $M \subseteq M^\infty$ and $\stuple{M,a} \in \Phi$. Suppose there is another $\stuple{\Xi(f,g),h'} \in M^\infty$. Then there exists a set $N \subseteq M^\infty$ such that $f \in X(N)$, $\stuple{f,g} \in Y(N)$ and $h' \simeq Z(f,g,N)$. By definition of $X$, for all $n$, if $fn$ is defined there is a $j' \in \p1$ such that $\stuple{fn,j'} \in N$. But these are the same $fn$'s as above, and the corresponding $j$'s are unique, so $j' \simeq j$. There is a similar argument for $Y$. Therefore $Z(f,g,M) \simeq Z(f,g,N)$, which means that $h \simeq h'$. 

So, $h$ being unique, $a \in M'$, and the class $M'$ is $\Phi$-closed. But $M^\infty$ is the least such class, so all of $M^\infty$ is coherent. \end{proof}

\begin{lem}\label{lem-c-xi} In $M^\infty$, \eqref{eq-xit} and \eqref{eq-xim} are true.\end{lem}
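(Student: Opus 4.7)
The plan is to verify \eqref{eq-xim} and \eqref{eq-xit} directly from the $\Phi$-closure definition of $M^\infty$ together with Lemma \ref{lem-c-con}. I will deal with \eqref{eq-xim} first. Given its two hypotheses, the strategy is to build a CZF-set $M \subseteq M^\infty$ witnessing $\stuple{M, \stuple{\Xi(f,g), Z(f,g,M)}} \in \Phi$, and then invoke $\Phi$-closure. The domain $D = \{n \in \omega : \defd{fn}\}$ is a set, obtained by Bounded Separation applied to Kleene's graph followed by Replacement. For each $n \in D$, the first hypothesis yields some $h$ with $\stuple{fn,h} \in M^\infty$, and Lemma \ref{lem-c-con} makes this $h$ unique up to $\simeq$, so Replacement produces $M_1 = \{\stuple{fn,h_n} : n \in D\} \subseteq M^\infty$. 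An analogous construction, using the second hypothesis on the sub-domain where $\defd{fn} \wedge \defd{gn} \wedge \mathcal{T}(fn)$, yields $M_2 \subseteq M^\infty$. Setting $M = M_1 \cup M_2$, the verifications $f \in X(M)$ and $\stuple{f,g} \in Y(M)$ are immediate from the construction, so $\stuple{M, \stuple{\Xi(f,g), Z(f,g,M)}} \in \Phi$ and $\mathcal{M}(\Xi(f,g))$ follows.

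For \eqref{eq-xit} the essential preliminary is an inversion principle: for every $a \in M^\infty$ there is a set $M \subseteq M^\infty$ with $\stuple{M,a} \in \Phi$. This follows because the class $A = \{a \in M^\infty : \exists M\,(M \subseteq M^\infty \wedge \stuple{M,a} \in \Phi)\}$ is itself $\Phi$-closed (any $N \subseteq A$ generating $a$ already witnesses $a \in A$) and a subclass of $M^\infty$, so by minimality of $M^\infty$ one has $A = M^\infty$. Given $\mathcal{M}(\Xi(f,g))$, inversion supplies $M \subseteq M^\infty$ with $f \in X(M)$, $\stuple{f,g} \in Y(M)$, and a witness of the form $\stuple{\Xi(f,g), Z(f,g,M)} \in M^\infty$.

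A chain of equivalences then connects $\mathcal{T}(\Xi(f,g))$ to the right-hand side of \eqref{eq-xit}: $\mathcal{T}(\Xi(f,g)) \iff \stuple{\Xi(f,g),\neumarral{1}} \in M^\infty \iff Z(f,g,M) \simeq \neumarral{1}$ (by coherence, together with extensionality of the Kuratowski pair) $\iff \neumarral{0} \in Z(f,g,M)$ (since $Z(f,g,M) \subseteq \neumarral{1}$ by definition) $\iff \allnat{n}\defd{fn}\wedge\stuple{fn,\neumarral{1}} \in M \to \defd{gn}\wedge\stuple{gn,\neumarral{1}} \in M$ by unfolding $Z$. What remains is to show that this last intensional implication is equivalent, under the hypotheses on $M$, to the target $\allnat{n}\defd{fn}\wedge\mathcal{T}(fn) \to \defd{gn}\wedge\mathcal{T}(gn)$. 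From intensional to extensional, $\stuple{gn,\neumarral{1}} \in M \subseteq M^\infty$ gives $\mathcal{T}(gn)$, while from $\mathcal{T}(fn)$, $f \in X(M)$ produces some $\stuple{fn,j} \in M$ and coherence applied to $\stuple{fn,j}$ and $\stuple{fn,\neumarral{1}}$ in $M^\infty$ forces $j \simeq \neumarral{1}$, so extensionality of the pair yields $\stuple{fn,\neumarral{1}} \in M$. The converse direction is symmetric, using $\stuple{f,g} \in Y(M)$ plus coherence to promote $\mathcal{T}(gn)$ to $\stuple{gn,\neumarral{1}} \in M$.

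The main obstacle is precisely this bridge between intensional membership in a particular witness set $M$ and the extensional predicates $\mathcal{M}$ and $\mathcal{T}$ that are defined only in terms of $M^\infty$; Lemma \ref{lem-c-con} together with extensionality of set-theoretic membership on Kuratowski pairs is exactly what makes that bridge work.
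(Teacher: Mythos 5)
Your proof is correct and follows essentially the same route as the paper's: use coherence (Lemma \ref{lem-c-con}) with Replacement to form a set of witnesses inside $M^\infty$ (your $M_1\cup M_2$ is the paper's $M_X\cup M_Y$), invoke $\Phi$-closure to get \eqref{eq-xim}, and read \eqref{eq-xit} off the definition of $Z$. The inversion principle for $M^\infty$ and the bridge between intensional membership in the witness set and the extensional predicates $\mathcal{M}$, $\mathcal{T}$, which you spell out, are left implicit in the paper's one-sentence treatment of \eqref{eq-xit}, so your version is if anything the more complete one.
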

\begin{proof} Suppose $f \in X(M^\infty)$ and $\stuple{f,g} \in Y(M^\infty)$. By the previous lemma the corresponding $\stuple{fn,j} \in M^\infty$ and $\stuple{gn,k} \in M^\infty$ are unique, so by Replacement and Bounded Separation the following are sets:\begin{align*}\
M_X &= \setb{\stuple{fn,j} \in M^\infty}{n \in \omega \wedge \defd{fn}} \\
M_Y &= \setb{\stuple{gn,k} \in M^\infty}{n \in \omega \wedge \defd{fn} \wedge \defd{gn} \wedge \stuple{fn,\neumarral{1}} \in M_X} \\
h &= Z(f,g,M_X \cup M_Y)
\end{align*}
$f \in X(M_X \cup M_Y)$ and $\stuple{f,g} \in Y(M_X \cup M_Y)$, and $M^\infty$ is $\Phi$-closed, so $\stuple{\Xi(f,g),h} \in M^\infty$, so $\Xi(f,g)$ is Meaningful as required by \eqref{eq-xim}. Finally, $Z$ expresses the correct Truth conditions for \eqref{eq-xit}. \end{proof}

\begin{lem}\label{lem-c-set} In $V^\infty$ and $M^\infty$, \eqref{eq-vi} and \eqref{eq-ve} are true.\end{lem}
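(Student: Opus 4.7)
The plan is to verify each assumption from the characterisation of $V^\infty$ as the least $\Psi$-closed class. For \eqref{eq-vi}, given an $x$ satisfying the antecedent, I would exhibit a set $V \subseteq V^\infty$ with $\stuple{V,x} \in \Psi$; then $\Psi$-closure of $V^\infty$ gives $x \in V^\infty$, which is the conclusion. The natural choice is $V = \setb{\el{x}{n}}{n \in \omega \wedge \stuple{\cd{x}{n},\neumarral{1}} \in M^\infty}$, a subclass of the set $W = \setb{\el{x}{n}}{n \in \omega}$ formed from $\omega$ by Replacement (since every $xn$ is defined by hypothesis). The antecedent of \eqref{eq-vi} then delivers exactly what is needed: every $xn$ is defined and every $\cd{x}{n}$ is Meaningful, i.e., $\someset{h}\stuple{\cd{x}{n},h} \in M^\infty$, which says $x \in P$; Truth of $\cd{x}{n}$ implies $\el{x}{n} \in V^\infty$, which says $V \subseteq V^\infty$; and the defining condition on $V$ is exactly $x \in Q(V)$.

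For \eqref{eq-ve} I would invoke the minimality of $V^\infty$. Given a predicate $\phi$ satisfying the antecedent of \eqref{eq-ve}, form the class $V' = \setb{x}{x \in V^\infty \wedge \phi(x)}$ and show it is $\Psi$-closed. Suppose a set $V$ satisfies $V \subseteq V'$ and $\stuple{V,x} \in \Psi$. Then $V \subseteq V^\infty$ together with $\Psi$-closure of $V^\infty$ gives $x \in V^\infty$. From $x \in Q(V)$: for every $n \in \omega$, $xn$ is defined and whenever $\stuple{\cd{x}{n},\neumarral{1}} \in M^\infty$, $\el{x}{n} \in V \subseteq V'$ and so $\phi(\el{x}{n})$. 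Combined with $x \in P$ this is precisely the antecedent of \eqref{eq-ve} instantiated at $x$, yielding $\phi(x)$, and hence $x \in V'$. Minimality of $V^\infty$ as the least $\Psi$-closed class gives $V^\infty \subseteq V'$, which is exactly \eqref{eq-ve}.

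The main technical point to watch is the legitimacy of forming the set $V$ in the first half: its definition separates out elements of $W$ by the predicate $\stuple{\cd{x}{n},\neumarral{1}} \in M^\infty$, which involves membership in an inductively defined class and is not a priori $\Delta_0$. This is handled by the stage-ordinal decomposition of $M^\infty$ produced by the Class Inductive Definition Theorem, which exhibits it as the union of a transfinite hierarchy of sets, so that the relevant separation becomes bounded relative to any set-sized cover of the stages reached by the relevant $\stuple{\cd{x}{n},\neumarral{1}}$. Apart from this bookkeeping point, both halves of the argument are straightforward instantiations of the defining closure property of $V^\infty$ and its induction principle.
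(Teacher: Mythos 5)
Your proof is correct and fills in exactly the argument the paper compresses into one line: \eqref{eq-vi} is the closure of $V^\infty$ under $\Psi$ applied to a witnessing set $V$, and \eqref{eq-ve} is its minimality applied to the class $V^\infty \cap \setb{x}{\phi(x)}$. The technical point you flag about separating $V$ out as a set is genuine and silently elided by the paper; besides the stage-bounding argument you sketch, it can be discharged by applying Strong Collection to $\allnat{n}\someset{h}\stuple{\cd{x}{n},h}\in M^\infty$ and then invoking the coherence of $M^\infty$ (Lemma \ref{lem-c-con}), which makes the condition $\stuple{\cd{x}{n},\neumarral{1}}\in M^\infty$ bounded relative to the collected set of witnesses.
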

\begin{proof} $P$ and $Q$ directly encode the conditions expressed by \eqref{eq-vi}; \eqref{eq-ve} follows from the minimality of $V^\infty$. \end{proof}

\begin{mthm} \label{mthm-c-ect} Assumptions \eqref{eq-vi}, \eqref{eq-ve}, \eqref{eq-xit}, and \eqref{eq-xim}, along with \formalsys{ECT_V}, are true in a realisability interpretation, provided that the four assumptions hold (without \formalsys{ECT_V}) in the underlying meta-theory. \end{mthm}
\begin{proof} The realisability interpretation is simply the standard one for first-order arithmetic, with the additional clauses that witnesses for $\mathcal{V}$, $\mathcal{M}$, and $\mathcal{T}$ are trivial, that is:\begin{align*}
(e \Vdash \mathcal{V}(x)) &\iff \mathcal{V}(x) \\
(e \Vdash \mathcal{M}(x)) &\iff \mathcal{M}(x) \\
(e \Vdash \mathcal{T}(x)) &\iff \mathcal{T}(x)
\end{align*}
The proof of \formalsys{ECT_0} is standard \cite{AT}, and with these clauses it extends to \formalsys{ECT_V} trivially. Moreover, since $\mathcal{V}$, $\mathcal{M}$, and $\mathcal{T}$ are almost-negative, it immediately follows that \eqref{eq-vi}, \eqref{eq-xit}, and \eqref{eq-xim} are almost-negative. They are simply true if they are true in the meta-theory. 

What remains, then, is \eqref{eq-ve}. This is a schema, and for some instances the witness needs to do something non-trivial, but one can be given whose validity is proven in terms of instances of the same schema in the meta-theory. Define by general recursion: 
$$\rho = \Lambda e.\Lambda x. (e'x)(\Lambda n.(\rho e)(\el{x}{n}))$$
Here $e'$ is a simple syntactic transformation introduced below for clarity of presentation. It will follow that $(\rho e)x$ iterates $e$ by set recursion along $x$, and as a result, $\rho$ witnesses \eqref{eq-ve} for any $\phi$, provided \eqref{eq-ve} holds in the meta-theory. 

Suppose $e$ witnesses the antecedent of \eqref{eq-ve}:
$$e \Vdash \allnat{x}(\allnat{n} \defd{xn} \wedge \mathcal{M}(\cd{x}{n}) \wedge (\mathcal{T}(\cd{x}{n}) \to \phi(\el{x}{n}))) \to \phi(x)$$
This means that:\begin{align*}
\allnat{x,a} &(\allnat{n} \defd{xn} \wedge \mathcal{M}(\cd{x}{n}) \wedge (\mathcal{T}(\cd{x}{n}) \to \defd{an} \wedge (an \Vdash \phi(\el{x}{n})))) \\ & \to \defd{(e'x)a} \wedge (e'x)a \Vdash \phi(x)
\end{align*}
The syntactic transformation $e \mapsto e'$ is chosen so that $e$ ignores the trivial witnesses for $\mathcal{M}$, $\mathcal{T}$, and $\defd{xn}$. Given a set $x$, suppose that by way of inductive hypothesis:
$$\allin{a}{x} \defd{(\rho e)a} \wedge (\rho e)a \Vdash \phi(a)$$
This means that
$$\allnat{n} \mathcal{T}(\cd{x}{n}) \to \defd{(\rho e)(\el{x}{n})} \wedge (\rho e)(\el{x}{n}) \Vdash \phi(\el{x}{n})$$
And since $x$ is Valid, 
$$\allnat{n} \defd{xn} \wedge \mathcal{M}(\cd{x}{n})$$
So, substituting $\Lambda n.(\rho e)(\el{x}{n})$ for $a$ in the assumption on $e$, it follows that
$$\defd{(e'x)(\Lambda n.(\rho e)(\el{x}{n}))} \wedge (e'x)(\Lambda n.(\rho e)(\el{x}{n})) \Vdash \phi(x)$$
By definition of $\rho$,
$$\defd{(\rho e)x} \wedge (\rho e)x \Vdash \phi(x)$$
Therefore, if \eqref{eq-ve} holds in the meta-theory then Set Induction applies, and for all $e$ witnessing the antecedent of \eqref{eq-ve} and all sets $x$, $(\rho e)x \Vdash \phi(x)$. This means $\rho$ witnesses \eqref{eq-ve} for any $\phi$. So, the four assumptions all pass through from the meta-theory. \end{proof}

\begin{mthm}\label{mthm-c-sc}\formalsys{CZF+RDC+ESP} can be interpreted in \formalsys{CZF} without Subset Collection (\formalsys{CZF^-}) in a way which preserves almost-negative formulas of arithmetic. That is, there is a translation of propositions $\phi \mapsto \phi^*$ such that, if \formalsys{CZF+RDC+ESP} proves $\phi$, then \formalsys{CZF^-} proves $\phi^*$; furthermore, if $\phi$ is an almost-negative formula of arithmetic, \formalsys{CZF-} proves $\phi \leftrightarrow \phi^*$. \end{mthm}
\begin{proof} Lemmas \ref{lem-c-xi} and \ref{lem-c-set} made use only of arguments available in \formalsys{CZF^-}, including the proof of the Class Inductive Definition Theorem \cite[\S 5]{AR} which specifically states it does not require Subset Collection. Therefore the four assumptions can be embedded into \formalsys{CZF^-}. The realisability interpretation of Meta-theorem \ref{mthm-c-ect} preserves almost-negative formulas of arithmetic, and the interpretation of set theory on top of that makes \formalsys{CZF+RDC+ESP} true. \end{proof}
\begin{rem} Relative consistency for \formalsys{ESP} was already known \cite[\S 8]{MR}, as it was for Subset Collection and \formalsys{RDC} \cite[\S 4]{RG}. This combined result is obtained directly without detours through Martin-L\"of type theory, Kripke-Platek set theory, or a classical theory of inductive definitions. \end{rem}

\section{The classical world through the looking glass}

Meta-theorem \ref{mthm-c-ect} also allows $\mathcal{V}$, $\mathcal{M}$, and $\mathcal{T}$ to be interpreted via realisability into a classical system, such as \formalsys{ID_1}, which is classical first-order arithmetic plus axioms for any non-nested positive inductive definitions. In this section, Truth and Falsehood are defined separately: \begin{itemize}
\item $\allnat{f,g} (\allnat{n} \defd{fn} \to \mathcal{F}(fn) \vee (\mathcal{T}(fn) \wedge \defd{gn} \wedge \mathcal{T}(gn))) \to \mathcal{T}(\Xi(f,g))$
\item $\allnat{f,g} (\allnat{n} \defd{fn} \to \mathcal{F}(fn) \vee (\mathcal{T}(fn) \wedge (\defd{gn} \to (\mathcal{T}(gn) \vee \mathcal{F}(gn))))) \wedge (\somenat{n} \defd{fn} \wedge \mathcal{T}(fn) \wedge (\defd{gn} \to \mathcal{F}(gn))) \to \mathcal{F}(\Xi(f,g))$
\item $\allnat{x} (\allnat{n} \defd{xn} \wedge (\mathcal{F}(\cd{x}{n}) \vee (\mathcal{T}(\cd{x}{n}) \wedge \mathcal{V}(\el{x}{n}))) \to \mathcal{V}(x)$
\end{itemize}
The corresponding induction principles are also assumed. These are positive mutually inductive definitions. They can be combined into a single positive non-nested inductive definition, and this is available in \formalsys{ID_1}. Then, $\mathcal{M}$ is an ordinary definition on top of this:
$$\mathcal{M}(x) \iff \mathcal{T}(x) \vee \mathcal{F}(x)$$

\begin{lem}Truth and Falsehood are mutually exclusive.\end{lem}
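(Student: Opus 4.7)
The plan is to invoke the simultaneous induction principle associated with the three mutually positive inductive clauses that define $\mathcal{T}$, $\mathcal{F}$, and $\mathcal{V}$. I will instantiate it with $\phi_T(x) := \neg\mathcal{F}(x)$, $\phi_F(x) := \neg\mathcal{T}(x)$, and $\phi_V(x) := \top$. Its conclusions are precisely $\allnat{x} \mathcal{T}(x) \to \neg\mathcal{F}(x)$ together with the symmetric statement, so the lemma follows at once. The $\mathcal{V}$-closure condition is vacuous; only the clauses for $\mathcal{T}$ and $\mathcal{F}$ require work.

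For the $\mathcal{T}$-closure obligation, the strengthened hypothesis on $\Xi(f,g)$ is that for every $n$ with $\defd{fn}$, either $\mathcal{F}(fn)\wedge\neg\mathcal{T}(fn)$ holds, or $\mathcal{T}(fn)\wedge\neg\mathcal{F}(fn)\wedge\defd{gn}\wedge\mathcal{T}(gn)\wedge\neg\mathcal{F}(gn)$ holds. I then suppose $\mathcal{F}(\Xi(f,g))$ and extract, from the $\mathcal{F}$-clause, an index $n$ with $\defd{fn}$, $\mathcal{T}(fn)$, and $\defd{gn}\to\mathcal{F}(gn)$. Case-splitting the strengthened hypothesis at this particular $n$: the first disjunct contradicts $\mathcal{T}(fn)$ directly, while the second gives $\defd{gn}$ (forcing $\mathcal{F}(gn)$) together with $\neg\mathcal{F}(gn)$. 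Either branch is a contradiction, so $\neg\mathcal{F}(\Xi(f,g))$. The $\mathcal{F}$-closure obligation is the mirror: at the existential witness $n$ the strengthened hypothesis supplies $\mathcal{T}(fn)\wedge\neg\mathcal{F}(fn)$ and, when $\defd{gn}$, $\mathcal{F}(gn)\wedge\neg\mathcal{T}(gn)$; applying the $\mathcal{T}$-rule for $\Xi(f,g)$ at this $n$ yields either $\mathcal{F}(fn)$, contradicting $\neg\mathcal{F}(fn)$, or $\mathcal{T}(gn)$, contradicting $\neg\mathcal{T}(gn)$.

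The point I expect to be the trap — and the reason I am doing a \emph{simultaneous} induction rather than an induction on $\mathcal{T}$ alone — is that taking only $\phi_T := \neg\mathcal{F}$ does not close. In that weaker attempt, the $\mathcal{F}(fn)$ branch of the strengthened $\mathcal{T}$-closure premise would carry no companion $\neg\mathcal{T}(fn)$, so when the $\mathcal{F}$-witness handed us a matching $\mathcal{T}(fn)$ there would be no contradiction at the current level. Pairing $\phi_T$ and $\phi_F$ through the simultaneous induction is exactly what closes this gap: every $\mathcal{F}$ sub-assertion arrives decorated with its $\neg\mathcal{T}$ consequence, and every $\mathcal{T}$ sub-assertion with its $\neg\mathcal{F}$ consequence, after which each obligation reduces to a straightforward case analysis at the existential witness.
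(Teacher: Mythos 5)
Your proof is correct and is essentially the paper's own argument: a mutual induction over the inductive clauses for $\mathcal{T}$ and $\mathcal{F}$, refuting the opposite predicate at the existential witness supplied by the $\mathcal{F}$-clause (and dually). The only cosmetic difference is that the paper conjoins the original predicates into the induction predicates, taking $\mathcal{T}'(x) \iff \mathcal{T}(x)\wedge\neg\mathcal{F}(x)$ and $\mathcal{F}'(x)\iff\mathcal{F}(x)\wedge\neg\mathcal{T}(x)$ and showing these satisfy the same closure conditions, whereas you invoke the strong form of the induction rule that decorates sub-premises with the original predicates automatically --- the two formulations are interderivable.
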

\begin{proof} In a manner similar to Lemma \ref{lem-c-con}, define the coherent versions of Truth and Falsehood: \begin{align*}
\mathcal{T}'(x) \iff& \mathcal{T}(x) \wedge \neg \mathcal{F}(x) &
\mathcal{F}'(x) \iff& \mathcal{F}(x) \wedge \neg \mathcal{T}(x) 
\end{align*}
Given $f$ and $g$, first suppose, as in the definition for $\mathcal{T}$, that:
$$\allnat{n} \defd{fn} \to \mathcal{F}'(fn) \vee (\mathcal{T}'(fn) \wedge \defd{gn} \wedge \mathcal{T}'(gn))$$
Immediately it follows that $\mathcal{T}(\Xi(f,g))$, but it also follows that $\neg\mathcal{F}(\Xi(f,g))$, because
$$\neg\somenat{n} \defd{fn} \wedge \mathcal{T}(fn) \wedge (\defd{gn} \to \mathcal{F}(gn))$$
So, $\mathcal{T}'(\Xi(f,g))$. Conversely suppose, as in the definition for $\mathcal{F}$, that: \begin{align*}
(\allnat{n} &\defd{fn} \to \mathcal{F}'(fn) \vee (\mathcal{T}'(fn) \wedge (\defd{gn} \to (\mathcal{T}'(gn) \vee \mathcal{F}'(gn)))))
\\ \wedge(\somenat{n} &\defd{fn} \wedge \mathcal{T}'(fn) \wedge (\defd{gn} \to \mathcal{F}'(gn))) \to \mathcal{F}'(\Xi(f,g))
\end{align*}
Immediately it follows that $\mathcal{F}(\Xi(f,g))$, but it also follows that $\neg\mathcal{T}(\Xi(f,g))$, because the second line implies that:
$$\neg\allnat{n} \defd{fn} \to \mathcal{F}(fn) \vee (\mathcal{T}(fn) \wedge \defd{gn} \wedge \mathcal{T}(gn))$$
So, $\mathcal{F}'(\Xi(f,g))$. This means that $\mathcal{T}'$ and $\mathcal{F}'$ satisfy the same mutual closure conditions as $\mathcal{T}$ and $\mathcal{F}$. By mutual $\mathcal{T}$ and $\mathcal{F}$ induction, it follows that:
$$\allnat{x} (\mathcal{T}(x) \to \mathcal{T}'(x)) \wedge (\mathcal{F}(x) \to \mathcal{F'}(x))$$
So finally,
$$\neg\somenat{x} \mathcal{T}(x) \wedge \mathcal{F}(x)$$
\end{proof} 

\begin{lem}In this interpretation \eqref{eq-vi}, \eqref{eq-ve}, \eqref{eq-xit}, and \eqref{eq-xim} are valid.\end{lem}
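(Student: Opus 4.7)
The approach is to translate between the form of the assumptions (which use $\mathcal{M}$ and the unilateral clause $\mathcal{T}(p)\to\dots$) and the form of the classical inductive definitions (which use $\mathcal{T}$, $\mathcal{F}$ separately and the bilateral clause $\mathcal{F}\vee(\mathcal{T}\wedge\dots)$). The translation is available because $\mathcal{M}(p)$ is by definition $\mathcal{T}(p)\vee\mathcal{F}(p)$, and because the previous lemma gives mutual exclusivity, so a disjunction $\mathcal{F}(p)\vee(\mathcal{T}(p)\wedge\chi)$ is classically the same as $\mathcal{M}(p)\wedge(\mathcal{T}(p)\to\chi)$.

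For \eqref{eq-vi} and \eqref{eq-ve}, I would use exactly this translation. Given the antecedent of \eqref{eq-vi}, so $\mathcal{M}(\cd{x}{n})$ for every $n$, split classically: if $\mathcal{T}(\cd{x}{n})$ then the hypothesis gives $\mathcal{V}(\el{x}{n})$, so the bilateral disjunction holds; if $\mathcal{F}(\cd{x}{n})$, the left disjunct holds directly. Then apply the $\mathcal{V}$-closure clause to conclude $\mathcal{V}(x)$. For \eqref{eq-ve}, assume the inductive step in $\phi$, and apply the induction principle naturally associated with the classical definition of $\mathcal{V}$: to verify the bilateral inductive step for $\phi$, take any $n$ with $\defd{xn}$ and $\mathcal{F}(\cd{x}{n})\vee(\mathcal{T}(\cd{x}{n})\wedge\phi(\el{x}{n}))$. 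The disjunction gives $\mathcal{M}(\cd{x}{n})$, and mutual exclusivity lets us convert it to the unilateral form $\mathcal{T}(\cd{x}{n})\to\phi(\el{x}{n})$, so the hypothesis of \eqref{eq-ve} fires and yields $\phi(x)$.

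For \eqref{eq-xit}, the forward direction requires an inversion principle for $\mathcal{T}$: if $\mathcal{T}(\Xi(f,g))$, then $\allnat{n}\defd{fn}\to(\mathcal{F}(fn)\vee(\mathcal{T}(fn)\wedge\defd{gn}\wedge\mathcal{T}(gn)))$. I would obtain this by mutual $\mathcal{T}$/$\mathcal{F}$ induction with the predicate $\phi_T(x)$ defined as ``if $x=\Xi(f,g)$ then the displayed condition holds for $f,g$'' (and $\phi_F$ trivially true), checking that $\phi_T,\phi_F$ are closed under the same clauses as $\mathcal{T},\mathcal{F}$ (using injectivity of the $\Xi$-constructor on codes). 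Once inversion is in hand, exclusivity removes the left disjunct when $\mathcal{T}(fn)$ holds, yielding the required consequent. For the backward direction, assume $\mathcal{M}(\Xi(f,g))$ and the consequent, so either $\mathcal{T}(\Xi(f,g))$ (done) or $\mathcal{F}(\Xi(f,g))$; in the latter case, a symmetric inversion for $\mathcal{F}$ produces $n$ with $\mathcal{T}(fn)\wedge(\defd{gn}\to\mathcal{F}(gn))$, contradicting the hypothesis together with exclusivity.

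For \eqref{eq-xim}, which is the main obstacle since both closure clauses may be needed, I would perform a classical case analysis: either the $\mathcal{T}$-closure clause applies directly, giving $\mathcal{T}(\Xi(f,g))$, or there exists $n$ with $\defd{fn}$ and $\neg(\mathcal{F}(fn)\vee(\mathcal{T}(fn)\wedge\defd{gn}\wedge\mathcal{T}(gn)))$. In the second case, the assumption $\mathcal{M}(fn)$ together with $\neg\mathcal{F}(fn)$ forces $\mathcal{T}(fn)$; then the assumption $\mathcal{M}(gn)$ whenever $\defd{gn}$, combined with the remaining negation, forces $\defd{gn}\to\mathcal{F}(gn)$; this witness together with the pointwise hypotheses (checked using $\mathcal{M}=\mathcal{T}\vee\mathcal{F}$) satisfies the $\mathcal{F}$-closure clause, yielding $\mathcal{F}(\Xi(f,g))$, hence $\mathcal{M}(\Xi(f,g))$. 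The only genuinely delicate points are the two inversions used in \eqref{eq-xit}; everything else is bookkeeping with classical logic and the exclusivity lemma just proved.
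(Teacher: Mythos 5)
Your proposal is correct and follows essentially the same route as the paper: classical case analysis using $\mathcal{M}(p) \leftrightarrow \mathcal{T}(p) \vee \mathcal{F}(p)$ and the mutual-exclusivity lemma to pass between the unilateral form $\mathcal{M}(p) \wedge (\mathcal{T}(p) \to \chi)$ and the bilateral form $\mathcal{F}(p) \vee (\mathcal{T}(p) \wedge \chi)$, with the $\mathcal{T}$-or-$\mathcal{F}$ dichotomy for $\Xi(f,g)$ decided by excluded middle. The only difference is that you spell out the inversion principles for $\mathcal{T}$ and $\mathcal{F}$ at $\Xi(f,g)$ via mutual induction, which the paper treats as immediate from the fixed-point property of the \formalsys{ID_1} inductive definition.
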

\begin{proof} Given any numbers $f$ and $g$, suppose for all $n$ such that $fn$ is defined, it is True or False. Suppose also that for all $n$ such that $fn$ and $gn$ are defined, and $fn$ is True, then $gn$ is True or False. This is where \formalsys{EM} comes in: either for all $n$ such that $fn$ is defined and True, $gn$ is also defined and True. In this case, $\Xi(f,g)$ is True. Or, there is some $n$ such that $fn$ is defined and True, but $gn$ is undefined, or defined but not True. But if $gn$ is defined, it is True or False. So, $gn$ is undefined or False. In that case, $\Xi(f,g)$ is False. So, $\Xi(f,g)$ is Meaningful, and \eqref{eq-xim} holds.

Suppose for all $n$ such that $fn$ is defined and True, $gn$ is also defined and True. Suppose that $\Xi(f,g)$ is False. Then there is an $n$ such that $fn$ is defined and True, and $gn$ is undefined or False. But $gn$ is defined an True, so, there would be a $gn$ which is both True and False. This is impossible by the previous lemma. So $\Xi(f,g)$ cannot be False; if it is Meaningful, then it is True. Conversely if it is True, then of course it is Meaningful. Also, for all $n$ such that $fn$ is defined, either $fn$ is False, or $fn$ is True and $gn$ is also defined and True. Given an $n$ such that $fn$ is defined and True, by the previous lemma it cannot also be False, so $gn$ is defined and True. So \eqref{eq-xit} holds.

Finally, to see that \eqref{eq-vi} holds, suppose for all $n$, $xn$ is defined, and $\cd{x}{n}$ is Meaningful, and if $\cd{x}{n}$ is True, then $\el{x}{n}$ is a Valid set. Since by the previous lemma $\cd{x}{n}$ cannot be True and False, this is equivalent to saying that $xn$ is defined, and either $\cd{x}{n}$ is False, or $\cd{x}{n}$ is True and $\el{x}{n}$ is Valid. That matches the above interpretation of $\mathcal{V}$. And \eqref{eq-ve} is the corresponding induction principle. \end{proof}

\begin{mthm}\label{mthm-c-id1} \formalsys{CZF^-} and \formalsys{ID_1} can interpret each other in a way that preserves $\Pi_2$ sentences of arithmetic.\end{mthm}
\begin{proof} The above shows that there is an interpretation in \formalsys{ID_1} that makes the four assumptions true, and then the realisability interpretation of Meta-theorem \ref{mthm-c-ect} preserves almost-negative sentences of arithmetic, which includes $\Pi_2$ sentences. In that interpretation \formalsys{CZF^-} (and more) is true. Conversely, \formalsys{ID_1} can be interpreted into \formalsys{ID_1(O)^i} (which is \formalsys{HA} plus a single non-nested inductive definition for the constructive second number class) in a way which preserves $\Pi_2$ sentences of arithmetic \cite{BFPS}. \formalsys{ID_1(O)^i} can then be embedded into \formalsys{CZF^-} using the same Class Inductive Definition Theorem used in the proof of Meta-theorem \ref{mthm-c-sc}. \end{proof}
\begin{rem} It was already well-known that \formalsys{CZF} and \formalsys{ID_1} had the same proof-theoretic strength \cite[\S 4]{RG}. This provides a simple interpretation without detours through type-theory or Kripke-Platek set theory. It can not, however, extend to almost-negative formulas as it did in Meta-theorem \ref{mthm-c-sc}. Markov's principle is almost-negative, and it is not provable in \formalsys{CZF}, whereas it is a classical tautology therefore provable in \formalsys{ID_1}. Indeed this technique does not work to interpret the four assumptions directly into \formalsys{ID_1^i}. In a way, it comes down to $\p1$: in \formalsys{ID_1} it can be divided in two, and in \formalsys{CZF} it is a class, but in \formalsys{ID_1^i} it is not expressible at all, other than by indirect interpretation of \formalsys{ID_1}.\end{rem}

\section{The equational interpretation}

Definition \ref{def-e-set} is a type of simultaneous inductive-recursive definition, as alluded to in section 1, although in fact it does not directly fit the normal inductive-recursive schema. It may be possible to come up with a general theory of inductive-multiple-recursive definitions. Instead this will be forced into the normal schema by also simultaneously defining {\it pairs} of sets. For clarity, the term ``w-set'' (and ``w-set-pair'') will be used in this context. 

\begin{defn}\label{def-e-set-pair}$\;$\begin{itemize}
\item $\empty$ is a w-set. Nothing is an intensional member of it.
\item If $f,g,h$ are (computable) sequences such that for all $n$, $\tuple{gn,hn}$ is a w-set-pair, and $fn$ is a w-set whenever $\tuple{gn,hn}$ is diagonal, then $[f;g,h]$ is a w-set. $x$ is an intensional member of it if and only if there is an $n$ such that $\tuple{gn,hn}$ is diagonal and $fn=x$. 
\item If $x$ and $y$ are w-sets, and if $\tuple{a,b}$ is a w-set-pair for every intensional member $a$ of $x$ and $b$ of $y$, then $\tuple{x,y}$ is a w-set-pair. It is diagonal if and only if for every intensional member $a$ of $x$ there is an intensional $b$ of $y$ such that $\tuple{a,b}$ is diagonal, and, for every intensional $b$ of $y$ there is an intensional member $a$ of $x$ such that $\tuple{a,b}$ is diagonal.
\item All w-sets and w-set-pairs are inductively generated by these rules.
\end{itemize}\end{defn}
$[f;g,h]$ represents the set $\setbn{n}{fn}{gn \approx hn}$, and $\empty$ is of course the empty set. This then has the form of a simultaneous inductive-recursive definition \cite{PD}. 

The introduction clause for w-set-pairs has two requirements: it requires that both elements are w-sets, but also that the intensional members of each of these are w-set-pairs. This is necessary to make the recursive definition of diagonality valid, since it must be able to refer to the diagonality of intensional members from each w-set, which requires these to be previously formed w-set-pairs. Of course, after the definition is set up, it can be shown by double w-set-induction that the second clause is redundant. Therefore w-set-pairs collapse to pairs of w-sets, and diagonality is extensional equality. All this is just to confirm the sense that Definition \ref{def-e-set} is not only constructively and predicatively valid, but actually a normal inductive-recursive definition with a bit of plastic surgery. There is no longer any need to speak of Definition \ref{def-e-set-pair} or w-set-pairs. 

The principles of set theory could now be re-proven in terms of w-sets instead of v-sets. This will only be sketched here. Rather, in this section, an isomorphism between w-sets and v-sets is exhibited. It turns out the sentences built using the $\Xi$ operator are exactly what is required. But in the v-set context, no limiting assumption on meaning was made. To make this work, Definition \ref{def-g-xi} needs to be extended by a clause that ``all Meaningful conditions are inductively built from $\Xi$.'' Formally, this will be an induction principle corresponding to \eqref{eq-xim}. Assume:
 \begin{align}\begin{split}
 (\allnat{f,g} &(\allnat{n} \defd{fn} \to \phi(fn)) \wedge (\allnat{n} \defd{fn} \wedge \defd{gn} \wedge \mathcal{T}(fn) \to \phi(gn)) \to \phi(\Xi(f,g))\\
    & ) \to \allnat{p} \mathcal{M}(p) \to \phi(p)
 \end{split}\label{eq-xime}
 \end{align}
This is a schema in $\phi$. It is not hard to see that the interpretation of section 7 already validates this induction principle since $\mathcal{M}$ was constructed via the Class Inductive Definition theorem. A similar argument applies in section 8.

Due to its limiting nature, \eqref{eq-xime} will inhibit a proof \formalsys{REA} such as the proof in the next section, and may actually allow it to be refuted. It can be considered unfaithful to the informal reading of ``meaningful'' in an essential way. This restriction, and therefore Definition \ref{def-e-set}, might be seen as undesirable, in much the same spirit that is often taken in classical set theory \cite[\S II.2]{PM}. On the other hand, if one is not interested in large set axioms (and most constructive mathematics can be formalised with much less than \formalsys{CZF+RDC+ESP}) then it may be seen as a desirable and natural completion that leaves absolutely no ambiguity as to what a set is.

Formal assumptions will now be stated. Introduce predicates $\mathcal{W}$ and $\mathcal{S}$. $\mathcal{W}(x)$ should be read ``$x$ is a w-set''. $\mathcal{S}(e,x,y)$ should be read ``$e$ witnesses the extensional equality of $x$ and $y$ as w-sets''.
Extensional equality of w-sets is defined as:
$$x \approx y \leftrightarrow \somenat{e} \mathcal{S}(e,x,y)$$
It is denoted $\approx$, to distinguish it from $\simeq$, extensional equality of v-sets, and $=$, intensional equality of natural numbers. Some numerical coding for $\empty$ and $[f;g,h]$ is assumed. Then, the formal defining assumptions for $\mathcal{W}$ are: \begin{equation}
\label{eq-wi1} \mathcal{W}(\empty) \end{equation}\begin{gather}
\label{eq-wi2} \begin{split} \allnat{f,g,h} (\allnat{n} \defd{fn} \wedge \defd{gn} \wedge \defd{hn} \wedge \mathcal{W}(gn) \wedge \mathcal{W}(hn) \wedge (gn \approx hn \to \mathcal{W}(fn))) 
 \\ \to \mathcal{W}([f;g,h]) \end{split} \\ 
\label{eq-we}\begin{split}
\phi(\empty) \wedge (\allnat{f,g,h} (\allnat{n} \defd{fn} \wedge \defd{gn} \wedge \defd{hn} \wedge \phi(gn) \wedge \phi(hn) \wedge (gn \approx hn \to \phi(fn))) \\ \; \to \phi([f;g,h])) \to \allnat{x} \mathcal{W}(x) \to \phi(x) 
\end{split}\end{gather}
\eqref{eq-we} is a schema in $\phi$. These three correspond to the first three clauses of Definition \ref{def-e-set}, in order. The last clause will be expressed by axioms for $\mathcal{S}$, like \eqref{eq-rf}, but here this has to be done by cases. The first three cases are not too hard:
\begin{align}
\label{eq-sf1}\allnat{e} &\mathcal{S}(e,\empty,\empty) \\
\label{eq-sf2}\allnat{e,f,g,h} &\mathcal{W}([f;g,h]) \to (\mathcal{S}(e,\empty,[f;g,h]) \leftrightarrow \neg \somenat{n} gn \approx hn) \\
\label{eq-sf3}\allnat{e,f,g,h} &\mathcal{W}([f;g,h]) \to (\mathcal{S}(e,[f;g,h],\empty) \leftrightarrow \neg \somenat{n} gn \approx hn)
\end{align}
Finally the main case is straightforward, though verbose:
\begin{equation}\label{eq-sf4}\begin{matrix}
\allnat{e,f,g,h,j,k,l} \mathcal{W}([f;g,h]) \wedge \mathcal{W}([j;k,l]) \to (\mathcal{S}(e,[f;g,h],[j;k,l]) \leftrightarrow \hfill \\
\begin{matrix}
   \hfill(\allnat{x} & \mathcal{S}(x_R,g(x_L),h(x_L)) & \to \defd{e_Lx} \hfill \\
   && \wedge \;\mathcal{S}((e_Lx)_{RL}, k((e_Lx)_L), l((e_Lx)_L)) \hfill \\
   && \wedge \;\mathcal{S}((e_Lx)_{RR},f(x_L),j((e_Lx)_L))) \hfill \\
   \hfill \wedge  (\allnat{x} & \mathcal{S}(x_R,k(x_L),l(x_L)) & \to \defd{e_Rx} \hfill \\
   && \wedge \;\mathcal{S}((e_Rx)_{RL}, g((e_Rx)_L), h((e_Rx)_L)) \hfill \\
   && \wedge \;\mathcal{S}((e_Rx)_{RR},j(x_L),f((e_Rx)_L))) ) \hfill
\end{matrix}\end{matrix}\end{equation}
The assumption \formalsys{ECT_W} is the same schema as \formalsys{ECT_0}, but where the class of almost-negative sentences includes the predicates $\mathcal{W}$ and $\mathcal{S}$. The predicates $\mathcal{W}$ and $\mathcal{S}$ and their defining assumptions will be used for this section only. 

Similar to section 2, $IND$ and $EL$ can be defined. The proofs of sections 2 and 3 go through basically as is. The operator $\Pi$ can be interpreted as follows:
$$\Pi(x \approx y, u \approx v) \iff \stuple{x,\setb{u}{x \approx y}} \approx \stuple{y,\setb{v}{x \approx y}}$$
There is a difficulty when it comes to \formalsys{ECT_W} that does not occur with \formalsys{ECT_V}: the predicate $IND$ is not almost-negative, because a w-set equality is not almost-negative. To get around this, an equation $x \approx y$ is replaced by $\mathcal{S}(e,x,y)$ with an extra parameter $e$. Every $\mathcal{S}(e,x,y)$ can be expressed as a different equation $u \approx v$, and this can be shown in the manner of Lemma \ref{lem-e-mean} below, using Set Induction rather than \eqref{eq-xime}. This allows w-sets to be constructed using $\mathcal{S}$'s as conditions. This in turn allows the proofs of section 4 to go through, with the necessary modifications. Finally, sections 5 and 6 are easy: the key was Kronecker Delta, Lemma \ref{lem-g-kron}, and this is now trivial.

The above work-around is also the basic idea required to get an isomorphism, and this is formally exhibited here. An equation $x \approx y$ is called ``canonically witnessed'' if there is a number $f$ such that $x \approx y \to \mathcal{S}(f,x,y)$. 

\begin{lem}\label{lem-e-impl}Suppose that $x$ and $y$ are w-sets and that $x \approx y$ is canonically witnessed. Suppose also that $x \approx y$ implies that $u$ and $v$ are w-sets and that $u \approx v$ is canonically witnessed. Then there are w-sets $s$ and $t$ such that $s \approx t$ is canonically witnessed, and such that $s \approx t \leftrightarrow (x \approx y \to u \approx v)$.\end{lem}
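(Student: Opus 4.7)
The plan is to encode the implication $x \approx y \to u \approx v$ as an extensional equation between two conditional singletons. Concretely, set
$$s = [\Lambda n.u;\, \Lambda n.x,\, \Lambda n.y], \qquad t = [\Lambda n.v;\, \Lambda n.x,\, \Lambda n.y],$$
which represent $\setbn{n}{u}{x\approx y}$ and $\setbn{n}{v}{x\approx y}$. The intuition is that if $x \approx y$ then both w-sets are (intensionally) singletons $\{u\}$ and $\{v\}$, so $s \approx t \leftrightarrow u \approx v$, and if $x \not\approx y$ then both are empty and $s \approx t$ holds vacuously, matching exactly the truth conditions of $x\approx y\to u\approx v$.

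First I would verify that $s$ and $t$ are w-sets by appeal to \eqref{eq-wi2}: the three sequences are total since they are constant on defined numerical codes, the side-sequences $\Lambda n.x$ and $\Lambda n.y$ return w-sets for every $n$ by the first hypothesis, and whenever the local condition $gn \approx hn$ --- which unfolds to $x \approx y$ --- holds, the main-sequence entry $u$ (resp.\ $v$) is a w-set by the second hypothesis. Next I would establish the biconditional by unfolding \eqref{eq-sf4}. For the forward direction, assume $s \approx t$ and $x \approx y$; letting $f_0$ be the canonical witness with $\mathcal{S}(f_0,x,y)$, instantiate the universally quantified clause of \eqref{eq-sf4} at $x' = \tuple{0,f_0}$ (both projection branches are symmetric) to extract some number witnessing $\mathcal{S}(\cdot,u,v)$, hence $u \approx v$. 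For the reverse direction, assume $x\approx y \to u\approx v$; the hypothesis $\mathcal{S}(x'_R, x, y)$ inside \eqref{eq-sf4} forces $x\approx y$, which by assumption yields $u\approx v$, from which some witness for each required sub-equation exists, so $s \approx t$ by \formalsys{ECT_W} and \eqref{eq-sf4}.

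For canonical witnessing of $s\approx t$, I would exhibit a single explicit $e$ built from $f_0$ (the canonical witness of $x\approx y$) and from $f_1$, a code for the canonical witness of $u\approx v$ (well-defined as a numeral by the second hypothesis: under $x\approx y$ we have $u\approx v \to \mathcal{S}(f_1,u,v)$, and outside that case $f_1$ is simply an irrelevant natural number). The natural choice is
$$e = \tuple{\Lambda x'.\tuple{0,\tuple{f_0, f_1}},\; \Lambda x'.\tuple{0,\tuple{f_0, f_1}}}.$$
Assuming $s \approx t$: whenever $\mathcal{S}(x'_R, g_s(x'_L), h_s(x'_L))$ holds, this is just $\mathcal{S}(x'_R, x, y)$, so $x \approx y$; combined with $s\approx t$ this gives $u\approx v$; the canonical witness then yields $\mathcal{S}(f_1,u,v)$, and the triple $\tuple{0,\tuple{f_0,f_1}}$ discharges all three conjuncts in the left clause of \eqref{eq-sf4} at once, since the target sequences $g_t, h_t$ are also constantly $x, y$ and $f_t(0) = v$. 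The right clause is handled symmetrically with the same data.

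The main obstacle is not conceptual but is the bookkeeping needed to align the seven-component clause of \eqref{eq-sf4} with the constant sequences defining $s$ and $t$: once one recognises that $g_s(x'_L) = g_t(\cdot) = x$ and $h_s(x'_L) = h_t(\cdot) = y$ regardless of the index, the required realiser collapses to the uniform $e$ above, and both directions of the biconditional become routine unfoldings.
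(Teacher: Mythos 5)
Your construction is exactly the paper's: it takes $s = \setbn{\zeta}{u}{x \approx y}$ and $t = \setbn{\zeta}{v}{x \approx y}$ with a dummy index $\zeta$, which is precisely your pair of constant-sequence w-sets $[\Lambda n.u;\Lambda n.x,\Lambda n.y]$ and $[\Lambda n.v;\Lambda n.x,\Lambda n.y]$, and then asserts the biconditional and the existence of a canonical witness just as you do (your version merely spells out the unfolding of \eqref{eq-wi2} and \eqref{eq-sf4}). One small constructive point: the numeral $f_1$ is only guaranteed to exist under the hypothesis $x \approx y$, so it should enter your realiser $e$ as a partial term (extracted via \formalsys{ECT_W}, defined whenever $\mathcal{S}(x'_R,x,y)$ holds) rather than as a fixed number chosen in advance --- this is the routine repair implicit in the paper's phrase that ``a canonical witness for $s \approx t$ can be constructed.''
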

\begin{proof} Define: \begin{align*}
s &= \setbn{\zeta}{u}{x \approx y} \\
t &= \setbn{\zeta}{v}{x \approx y}
\end{align*}
$\zeta$ is a dummy variable. $s$ and $t$ are valid w-sets under the stated assumptions. Clearly, $s \approx t \leftrightarrow (x \approx y \to u \approx v)$. All the relevant sub-equations are canonically witnessed, so a canonical witness for $s \approx t$ can be constructed. \end{proof}
 
\begin{lem}\label{lem-e-seq}Given a sequence of canonically witnessed w-set equations, there is a canonically witnessed w-set equation which expresses that every equation in the sequence is true.\end{lem}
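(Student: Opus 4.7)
The plan is to encode the infinite conjunction as a single equation of the form $s \approx \omega$, by making $s$ into the subset of $\omega$ whose elements are those $\neumarral{n}$ for which $x_n \approx y_n$. Given the sequences $(x_n), (y_n)$ of w-sets with canonical witnesses $(f_n)$ for $x_n \approx y_n$, I would define
\[
s = [\Lambda n.\neumarral{n};\,\Lambda n.x_n,\,\Lambda n.y_n] \quad\text{and}\quad t = [\Lambda n.\neumarral{n};\,\Lambda n.\empty,\,\Lambda n.\empty].
\]
Both are w-sets by \eqref{eq-wi2}: for $s$, the $x_n$ and $y_n$ are w-sets by hypothesis and $\neumarral{n}$ is unconditionally a w-set; $t$ is the standard w-set encoding of $\omega$. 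The intensional members of $s$ are those $\neumarral{n}$ with $x_n \approx y_n$, while every $\neumarral{n}$ is an intensional member of $t$ (since $\empty \approx \empty$). Using the basic fact that $\neumarral{m} \approx \neumarral{n}$ holds iff $m = n$ (provable by double ordinary induction from the coding of finite von Neumann ordinals), the only way for each $\neumarral{n}$ from $t$'s side to match some intensional member of $s$ is to have $x_n \approx y_n$, while the reverse matching is automatic; so $s \approx t \iff \allnat{n} x_n \approx y_n$.

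For the canonical witness, I would construct $e$ directly from \eqref{eq-sf4}. The left half handles $x$ representing an intensional member of $s$ (so $x_R$ witnesses $x_{x_L}\approx y_{x_L}$): send $x$ to $\tuple{x_L,\tuple{0,\xi_{x_L}}}$, where the $0$ serves as a witness for $\empty\approx\empty$ via \eqref{eq-sf1} and $\xi_n$ is a uniformly computable canonical witness for $\neumarral{n}\approx\neumarral{n}$. The right half handles $x$ representing an intensional member of $t$ (so $x_R$ trivially witnesses $\empty\approx\empty$): send $x$ to $\tuple{x_L,\tuple{f_{x_L},\xi_{x_L}}}$. Under the assumption $s \approx t$, we have $x_n \approx y_n$ for every $n$, which is precisely the precondition making $f_{x_L}$ a valid $\mathcal{S}$-witness. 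Each clause of \eqref{eq-sf4} is then satisfied by inspection.

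The main obstacles are two preliminary points that must be handled first. To treat $(f_n)$ as a computable sequence rather than a merely pointwise existence, I would apply \formalsys{ECT_W} with trivial antecedent to the hypothesis $\allnat{n}\somenat{f}(x_n\approx y_n\to\mathcal{S}(f,x_n,y_n))$; this is legitimate because $\top$ is almost-negative, and yields a single index for $\Lambda n.f_n$. Second, the reflexivity witnesses $\xi_n$ must be constructed uniformly in $n$: this is done by primitive recursion on $n$, using \eqref{eq-sf1} at the base $\neumarral{0}=\empty$ and assembling $\xi_{n+1}$ from $\xi_n$ and the $\xi_k$ for $k\le n$ via the main clause \eqref{eq-sf4} applied to the $[f;g,h]$-representation of $\neumarral{n+1}$. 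Once these are in hand, assembling $e$ and verifying \eqref{eq-sf4} is routine, and the resulting number $e$ depends only on the fixed computable data $(x_n),(y_n),(f_n),(\xi_n)$, so it serves as the required canonical witness.
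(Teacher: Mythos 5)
Your proof is correct, but it encodes the infinitary conjunction differently from the paper. The paper forms two w-sets with \emph{unconditional} membership, $u = \setbn{n}{\stuple{\neumarral{n},xn}}{\neumarral{0}\approx\neumarral{0}}$ and $v = \setbn{n}{\stuple{\neumarral{n},yn}}{\neumarral{0}\approx\neumarral{0}}$, so that the ordinal tags force an index-by-index matching and the Kuratowski-pair equality $\stuple{\neumarral{n},xn}\approx\stuple{\neumarral{n},yn}$ reduces to $xn\approx yn$; the equations to be conjoined live in the \emph{elements}. You instead push the equations into the \emph{conditions}, forming the characteristic subset $s=\setbn{n}{\neumarral{n}}{x_n\approx y_n}$ of $\omega$ and asserting $s\approx\omega$ --- essentially the Kronecker-delta/infimum idea of section 5 transplanted to w-sets. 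Both routes need the fact that $\neumarral{m}\approx\neumarral{n}$ iff $m=n$ with uniformly constructible reflexivity witnesses; the paper additionally needs the extensionality analysis of the set-theoretic ordered pair, which your version avoids, so your encoding is marginally more economical. On the other hand your right-to-left matching clause genuinely depends on the hypothesis $s\approx t$ to activate the witnesses $f_{x_L}$, whereas in the paper's version the index matching is unconditional and only the inner pair-equality witnesses are conditional --- a cosmetic difference, since a canonical witness need only function under the assumption that the equation holds. You are also more explicit than the paper about the two uniformity issues (extracting a single index for $\Lambda n.f_n$ via \formalsys{ECT_W}, and building the $\xi_n$ by course-of-values recursion); the paper leaves both as ``a canonical witness can be constructed.'' Your treatment is sound.
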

 \begin{proof} Given sequences of w-sets $x$ and $y$ such that $xn \approx yn$ is canonically witnessed for all $n$, define:\begin{align*}
 u &= \setbn{n}{\stuple{\neumarral{n},xn}}{\neumarral{0} \approx \neumarral{0}} \\
 v &= \setbn{n}{\stuple{\neumarral{n},yn}}{\neumarral{0} \approx \neumarral{0}}
 \end{align*}
Now, by the properties of set-theoretic ordered pair and finite Von Neumann ordinals, the equation $\stuple{\neumarral{n},xn} \approx \stuple{\neumarral{n'},yn'}$ is true if and only if $n=n'$ and $xn \approx yn'$. It follows that $u \approx v$ if and only if $xn \approx yn$ for all $n$. Moreover, an equation between any two finite Von Neumann ordinals is canonically witnessed; this can easily be shown by induction. Also by hypothesis $xn \approx yn$ is canonically witnessed. Therefore a canonical witness for $u \approx v$ can be constructed.  \end{proof}
 
\begin{lem}\label{lem-e-def2}Given two numbers $e$ and $i$,  suppose that if $ei$ is defined, it is a canonically witnessed w-set equation, that is, $(ei)_{RL}$ and $(ei)_{RR}$ are w-sets and $(ei)_L$ is a canonical witness for $(ei)_{RL} \approx (ei)_{RR}$. Then there is a canonically witnessed w-set equation which expresses that $ei$ is defined and true, that is, it expresses $\defd{ei} \wedge (ei)_{RL} \approx (ei)_{RR}$.\end{lem}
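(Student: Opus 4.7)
The plan is to express $\defd{ei} \wedge (ei)_{RL} \approx (ei)_{RR}$ as the conjunction of two canonically witnessed w-set equations and then merge them using Lemma \ref{lem-e-seq}. The first piece encodes the $\Sigma_1$ statement $\defd{ei}$ as a w-set equation, and the second handles the conditional equality of values by way of Lemma \ref{lem-e-impl}.

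For the first piece, exploit the decidability of Kleene's $\mathbf{T}$ predicate. Let $c$ be the computable sequence with $cn = \neumarral{0}$ when $\mathbf{T}(e,i,n)$ holds and $cn = \neumarral{1}$ otherwise, and set
$$a = [\Lambda n.\empty;\; \Lambda n.\neumarral{0},\; c], \qquad b = [\Lambda n.\empty;\; \Lambda n.\empty,\; \Lambda n.\empty].$$
Both $a$ and $b$ are valid w-sets, since the guard equations are between concrete finite ordinals (whose w-set-hood is unconditional) and the element slot is always $\empty$. In $b$ every index is active, and in $a$ the index $n$ is active iff $\neumarral{0} \approx cn$, iff $\mathbf{T}(e,i,n)$; in either case the intensional member contributed is $\empty$. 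Hence $a \approx b \leftrightarrow \somenat{n} \mathbf{T}(e,i,n) \leftrightarrow \defd{ei}$. The equation is canonically witnessed by a uniform code which, in the $e_L$ direction of \eqref{eq-sf4}, returns trivial matches to index $0$ of $b$, and in the $e_R$ direction searches for the least $n$ with $\mathbf{T}(e,i,n)$ and pairs it with the trivial $\mathcal{S}$-witnesses of $\neumarral{0} \approx \neumarral{0}$ and $\empty \approx \empty$ (the latter from \eqref{eq-sf1}).

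For the second piece, apply Lemma \ref{lem-e-impl} with $x \approx y$ taken as $a \approx b$ and $u \approx v$ taken as $(ei)_{RL} \approx (ei)_{RR}$. Its hypothesis matches exactly the hypothesis of the present lemma: whenever $a \approx b$, i.e.\ $\defd{ei}$, the assumption gives that $(ei)_{RL}$ and $(ei)_{RR}$ are w-sets and that $(ei)_L$ canonically witnesses their equality. This yields a canonically witnessed $s' \approx t'$ equivalent to $\defd{ei} \to (ei)_{RL} \approx (ei)_{RR}$. Conjoining $a \approx b$ with $s' \approx t'$ via Lemma \ref{lem-e-seq} applied to the length-two sequence of these two equations produces a canonically witnessed $s \approx t$ equivalent to $\defd{ei} \wedge (ei)_{RL} \approx (ei)_{RR}$, as required. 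The main obstacle is the first step: threading a uniform computation of the $\mathcal{S}$-witness for $a \approx b$ through the verbose structure of \eqref{eq-sf4}. This is purely mechanical (bounded search plus nested pairing of trivial sub-witnesses) but is the only place where real bookkeeping beyond citation of prior lemmas occurs.
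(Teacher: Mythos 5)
Your first piece is fine: the equation $a \approx b$ does express $\defd{ei}$, is a legitimate w-set equation, and is canonically witnessed by an unbounded search for a halting transcript --- the same search trick the paper uses. The gap is in the second piece. Lemma \ref{lem-e-impl} is a statement about two \emph{numbers} $u$ and $v$; to instantiate it with $u = (ei)_{RL}$ and $v = (ei)_{RR}$ you must already know that these partial terms denote, i.e.\ that $\defd{ei}$ --- which is precisely what cannot be assumed here (the paper's convention is that partial terms like $ei$ will only be used in contexts where they are defined). Concretely, the $s'$ that Lemma \ref{lem-e-impl} would build is $[\Lambda\zeta.(ei)_{RL};\,\Lambda\zeta.a,\,\Lambda\zeta.b]$, and the introduction rule \eqref{eq-wi2} demands that the element sequence be total, $\allnat{n}\defd{fn}$, \emph{unconditionally}; only its w-set-hood may be made conditional on the guard. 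If $ei$ diverges, $\Lambda\zeta.(ei)_{RL}$ is nowhere defined (computing it requires the halting of $ei$), so $s'$ is not a w-set and the reduction only goes through under the hypothesis $\defd{ei}$, defeating the purpose of the lemma.

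The repair is to index the set-builder by computation transcripts $u$ and guard every extraction of $\mathbf{U}(u)$ by the \emph{decidable} predicate $\mathbf{T}(e,i,u)$, so that all sequences involved are total. This is what the paper does in a single stroke: it sets $x = \neumarral{1}$ and $y = \setbn{u}{\neumarral{0}}{T'(e,i,u,\mathbf{U}(u)_{RL},\neumarral{0}) \approx T'(e,i,u,\mathbf{U}(u)_{RR},\neumarral{1})}$, where $T'$ returns its fourth argument if $\mathbf{T}(e,i,u)$ holds and its fifth otherwise. The guard at index $u$ is then a genuine w-set-pair for every $u$ (degenerating to the false $\neumarral{0}\approx\neumarral{1}$ when $u$ is not a halting transcript), it holds iff $\mathbf{T}(e,i,u)$ and $(ei)_{RL}\approx(ei)_{RR}$, and $x \approx y$ iff $y$ is inhabited iff $\defd{ei}\wedge(ei)_{RL}\approx(ei)_{RR}$; the canonical witness again performs an unbounded search for the transcript. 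Your two-conjunct decomposition can be salvaged, but only by applying the same $T'$-style totalization to the conditional conjunct, at which point you have essentially reconstructed the paper's single equation.
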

\begin{proof} Define:\begin{align*}
x &= \neumarral{1}\\
y &= \setbn{u}{\neumarral{0}}{T'(e,i,u,\mathbf{U}(u)_{RL},\neumarral{0}) \approx T'(e,i,u,\mathbf{U}(u)_{RR},\neumarral{1}} \\
T'(e,i,u,a,b) &= \begin{cases} 
 a, &\text{if } \mathbf{T}(e,i,u) \\ 
 b, &\text{otherwise} 
\end{cases}\end{align*}
The equation in the definition of $y$ is true if and only if $\mathbf{T}(e,i,u)$ and $\mathbf{U}(u)_{RL} \approx \mathbf{U}(u)_{RR}$. So, the equation $x \approx y$ is true if and only if $\defd{ei} \wedge (ei)_{RL} \approx (ei)_{RR}$, as required. Now, to get a canonical witness $f$, notice that all the relevant sub-equations are canonically witnessed. $f$ just has to produce a $u$ such that $\mathbf{T}(e,i,u)$. It can do an unbounded search. If in fact $x \approx y$, then $\defd{ei}$, so the search will terminate; $x \approx y \to \mathcal{S}(f,x,y)$, as required. \end{proof}
 
\begin{lem}\label{lem-e-mean}Any Meaningful condition can be expressed by a canonically witnessed w-set equation.\end{lem}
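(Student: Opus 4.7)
The plan is to argue by induction on Meaningful conditions using the principle \eqref{eq-xime}, taking $\phi(p)$ to assert that there exist w-sets $u,v$ and a number $c$ with $c$ a canonical witness for $u\approx v$ and $u\approx v \leftrightarrow \mathcal{T}(p)$. So that the inductive hypothesis will have subcomponents that are themselves Meaningful, I would in fact state $\phi$ in the form $\mathcal{M}(p)\to \exists u,v,c.\,\ldots$; the Meaning clauses \eqref{eq-xim} for $\Xi$, together with \eqref{eq-xime} applied to a small auxiliary predicate, then ensure that if $\Xi(f,g)$ is Meaningful, every $fn$ (where $fn$ is defined) and every $gn$ (where $fn,gn$ are defined and $\mathcal{T}(fn)$ holds) is Meaningful, so the hypothesis applies. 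Since \eqref{eq-xime} has only the $\Xi$ clause, the task reduces to turning the two inductive hypotheses into a canonically witnessed equation equivalent, via \eqref{eq-xit}, to $\mathcal{T}(\Xi(f,g))$.

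Since $\defd{fn}$ is $\Sigma_1$ and $\mathcal{T}$ is almost-negative, the antecedents of both inductive hypotheses are almost-negative, so \formalsys{ECT_V} extracts computable partial functions $e_F$ and $e_G$ that, on each $n$ satisfying the respective antecedent, return a canonical witness together with the two w-sets of an equation equivalent to $\mathcal{T}(fn)$ or $\mathcal{T}(gn)$. For each $n$, Lemma \ref{lem-e-def2} applied to $e_F$ produces a canonically witnessed equation $A_n$ with $A_n \leftrightarrow (\defd{fn}\wedge\mathcal{T}(fn))$. For $B_n$, I would apply Lemma \ref{lem-e-def2} to a mild patch $\tilde e_G$ of $e_G$, namely one that first waits for $gn$ to converge before returning $e_G(n)$; the result is an equation which, under the hypothesis $A_n$, is a canonically witnessed equation equivalent to $\defd{gn}\wedge\mathcal{T}(gn)$. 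Lemma \ref{lem-e-impl} then packages $A_n$ and $B_n$ into a canonically witnessed $C_n$ equivalent to the implication $A_n\to B_n$, which is exactly the per-$n$ clause of $\mathcal{T}(\Xi(f,g))$ given by \eqref{eq-xit}. Finally, Lemma \ref{lem-e-seq} assembles the sequence $(C_n)_n$ into a single canonically witnessed equation equivalent to $\forall n.\, A_n\to B_n$, i.e., to $\mathcal{T}(\Xi(f,g))$, closing the induction.

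The main obstacle will be keeping the partial ECT-extracted functions compatible with the premise of Lemma \ref{lem-e-def2}, which requires that whenever the function converges it must return a canonically witnessed w-set equation. ECT only promises this on its almost-negative antecedent, and $e_G$ might in principle converge outside that antecedent to garbage; patching $e_G$ to $\tilde e_G$ by interposing the wait on $\defd{gn}$ aligns the definedness of the patched function with the antecedent under the added hypothesis $A_n$. This is where the argument has to be careful, but once the patch is in place the compositional use of Lemmas \ref{lem-e-def2}, \ref{lem-e-impl}, and \ref{lem-e-seq} is routine.
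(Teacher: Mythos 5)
Your proposal is correct and follows essentially the same route as the paper: induction via \eqref{eq-xime}, with Lemma \ref{lem-e-def2} supplying canonically witnessed equations for ``$fn$ is defined and True'' and (conditionally) for ``$gn$ is defined and True'', Lemma \ref{lem-e-impl} forming the per-$n$ implication, and Lemma \ref{lem-e-seq} taking the infinitary conjunction to express $\Xi(f,g)$. Your explicit treatment of the \formalsys{ECT}-extraction of the witnessing functions and the partiality patch on $e_G$ fills in a detail the paper's proof leaves implicit, but does not change the argument.
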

\begin{proof} Proceed by induction using \eqref{eq-xime}. Given $f$ and $g$, take for inductive hypothesis that when $fn$ is defined it can be expressed by a canonically witnessed equation, and that when $fn$ and $gn$ are defined and $fn$ is True, $gn$ can be expressed by a canonically witnessed equation. 
 
Take any $n$. By Lemma \ref{lem-e-def2}, there is a canonically witnessed equation that expresses that $fn$ is defined and True; and also, if $fn$ is defined and True, there is a canonically witnessed equation that expresses that $gn$ is defined and True. So, by Lemma \ref{lem-e-impl}, there is a canonically witnessed equation that expresses that if $fn$ is defined and True, then $gn$ is defined and True. 

So, by Lemma \ref{lem-e-seq}, there is a canonically witnessed equation expressing $\Xi(f,g)$. And finally, by \eqref{eq-xime}, every Meaningful condition can be expressed by a canonically witnessed equation. \end{proof}
 
\begin{lem}\label{lem-e-set}If $g$ and $h$ are sequences of v-sets, and if $f$ is a sequence such that $fn$ is a v-set if $gn \simeq hn$, then $\setbn{n}{fn}{gn \simeq hn}$ is a v-set.\end{lem}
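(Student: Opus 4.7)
The plan is to explicitly construct the desired v-set by ``flattening'' the equality condition $gn \simeq hn$ into a Meaningful condition, using the canonical witness predicate $\overline{\mathcal{R}}$ from Lemma \ref{lem-g-req}. The standard trick described just before Section 4 applies: an equality $gn \simeq hn$ is $\somenat{e} \mathcal{R}(e, gn, hn)$, and a single existential quantifier can be absorbed into the indexing sequence by using $n_L$ and $n_R$ in place of $n$.

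Concretely, define
$$ x \;=\; \Lambda n.\tuple{\,f(n_L),\; \overline{\mathcal{R}}(n_R, g(n_L), h(n_L))\,}. $$
Then $xn$ is always defined, $\cd{x}{n} = \overline{\mathcal{R}}(n_R, g(n_L), h(n_L))$ is Meaningful by Lemma \ref{lem-g-req} (since $g(n_L)$ and $h(n_L)$ are v-sets), and if this condition is True, then by Lemmas \ref{lem-g-req} and \ref{lem-g-eq} we get $g(n_L) \simeq h(n_L)$, so by hypothesis $\el{x}{n} = f(n_L)$ is a v-set. Thus \eqref{eq-vi} applies and $x$ is a v-set.

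It remains to verify extensional equality with $\setbn{n}{fn}{gn \simeq hn}$. Unwinding, the elements of $x$ are exactly those $f(m_L)$ for which $\overline{\mathcal{R}}(m_R, g(m_L), h(m_L))$ is True. By Lemmas \ref{lem-g-req} and \ref{lem-g-eq}, the projection $m \mapsto m_L$ takes this index set onto precisely $\{n : gn \simeq hn\}$, and the corresponding element is $fn$. So
$$ x \;\simeq\; \setbn{n}{fn}{gn \simeq hn}, $$
which completes the proof.

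The only non-routine ingredient is the appeal to Lemma \ref{lem-g-req}, which provides a Meaningful condition expressing $x \simeq y$; everything else is a matter of repackaging indices. No step here requires the inductive-recursive machinery for w-sets or the induction principle \eqref{eq-xime}, so the lemma can be stated purely in the v-set framework. The result will be used in the next step to translate w-set constructions into v-set constructions.
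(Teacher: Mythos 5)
Your construction $\Lambda n.\tuple{f(n_L),\overline{\mathcal{R}}(n_R,g(n_L),h(n_L))}$ is exactly the v-set given in the paper's proof, which likewise cites Lemmas \ref{lem-g-req} and \ref{lem-g-eq} and nothing more. Your write-up just makes explicit the verification that the paper leaves implicit; the approach is the same and correct.
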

\begin{proof} It follows from Lemmas \ref{lem-g-req} and \ref{lem-g-eq} that
$$\setbn{n}{fn}{gn \simeq hn} = \Lambda n.\tuple{f(n_L),\overline{\mathcal{R}}(n_R,g(n_L),h(n_L))}$$
is the required v-set. \end{proof}
 
\begin{mthm}\label{mthm-iso} Definition \ref{def-e-set} is isomorphic to Definitions \ref{def-g-set} and \ref{def-g-xi} plus the assumption that all Meaningful conditions are inductively built using $\Xi$. Specifically, \formalsys{HA+ECT_V} plus the assumptions \eqref{eq-vi}, \eqref{eq-ve}, \eqref{eq-xit}, \eqref{eq-xim}, and \eqref{eq-xime} can interpret \formalsys{HA+ECT_W} plus the assumptions \eqref{eq-wi1}--\eqref{eq-sf4}, and vice versa, in a way that preserves all sentences of first-order set theory.\end{mthm}
 \begin{proof} Given a w-set $x$, an equivalent v-set $\mathbf{W}x$ can be constructed recursively using Empty Set and Lemma \ref{lem-e-set}. Then interpret:\begin{align*}
 \mathcal{W}(x) &\iff \mathcal{V}(\mathbf{W}x) \\
 \mathcal{S}(e,x,y) &\iff \mathcal{R}(e,\mathbf{W}x,\mathbf{W}y) 
 \end{align*}
 $\mathcal{R}$ can in turn be interpreted in terms of $\mathcal{T}$ by Lemma \ref{lem-g-req}. This will satisfy the assumptions for w-sets. Also since $\mathcal{V}$ and $\mathcal{T}$ are almost-negative, so are $\mathcal{W}$ and $\mathcal{S}$, therefore \formalsys{ECT_W} is also satisfied.
 
The other direction works as follows: given a condition $p$, Lemma \ref{lem-e-mean} gives a triple $\mathbf{M}p = \tuple{e,\tuple{x,y}}$. Given a v-set $x$, an equivalent w-set $\mathbf{V}x$ can be constructed by recursively replacing every condition $p$ which appears with an equation $(\mathbf{M}p)_{RL} \approx (\mathbf{M}p)_{RR}$. Then interpret:\begin{align*}
 \mathcal{M}(p) &\iff \mathcal{W}((\mathbf{M}p)_{RL}) \wedge \mathcal{W}((\mathbf{M}p)_{RR}) \\
 \mathcal{T}(p) &\iff \mathcal{S}((\mathbf{M}p)_L, (\mathbf{M}p)_{RL}, (\mathbf{M}p)_{RR}) \\
 \mathcal{V}(x) &\iff \mathcal{W}(\mathbf{V}x)
 \end{align*}
Since $(\mathbf{M}p)_{RL} \approx (\mathbf{M}p)_{RR}$ is canonically witnessed by $(\mathbf{M}p)_L$, the equation is interchangeable with $\mathcal{T}(p)$, but $\mathcal{T}$ remains almost-negative. So, this satisfies the assumptions for v-sets, and $\mathcal{V}$, $\mathcal{M}$ and $\mathcal{T}$ are almost-negative because $\mathcal{W}$ and $\mathcal{S}$ are, so \formalsys{ECT_V} is also satisfied. 
 
It is not hard to see that the operations $\mathbf{V}$ and $\mathbf{W}$ are also inverses up to extensional equality, so first-order set theory is preserved. \end{proof}
 
The formal machinery of w-sets might seem inelegant compared to that of v-sets. It might then be imagined that if an actual construction were attempted in terms of w-sets equations, the result would be even messier. This section closes with a brief example of a direct construction.

Assume bijective encodings $\mathbf{Q}$ and $\mathbf{Q^+}$ of the rational numbers and the positive rational numbers, respectively. Define: \begin{align*}
J(e,z,u,x) &= \begin{cases}
  \neumarral{1} & \text{if } \mathbf{T}(e,z,u) \wedge \mathbf{Q}x < \mathbf{Q}\mathbf{U}(u) - \mathbf{Q^+}z \\ 
 \neumarral{0} & \text{otherwise} 
\end{cases}\\
K(e,z,z',u,u') &= \begin{cases}
   \neumarral{1} & \text{if } \mathbf{T}(e,z,u) \wedge \mathbf{T}(e,z',u') \wedge \abs{\mathbf{Q}\mathbf{U}(u) - \mathbf{Q}\mathbf{U}(u')} < \max(\mathbf{Q^+}z,\mathbf{Q^+}z') \\
  \neumarral{0} & \text{otherwise}
\end{cases}
 \end{align*}
The rational inequations involved are of course computable, so $J$ and $K$ are computable. Now define: \begin{align*}
X &= \Lambda e. \setbn{x,z,u}{\mathbf{Q}x}{\neumarral{1} \approx J(e,z,u,x)}\\
Y &= \Lambda e. \setbn{z,z',u,u'}{\neumarral{\tuple{z,z'}}}{\neumarral{1} \approx K(e,z,z',u,u')}\\
\mathbb{R} &= \setbn{e}{Xe}{\omega \approx Ye}
\end{align*}
Viewing $e$ as a computable partial function from $\mathbb{Q}^+$ to $\mathbb{Q}$, it follows that 
$$Xe \approx \setb{x\in\mathbb{Q}}{\somein{z}{\mathbb{Q}^+} \defd{ez} \wedge x < ez - z}$$
The equation $\omega \approx Ye$ expresses that
$$(\allin{z}{\mathbb{Q}^+} \defd{ez}) \wedge (\allin{z,z'}{\mathbb{Q}^+} \left| ez - ez' \right| < \max(z,z'))$$
When this equation holds, it follows that $e$ is a Cauchy sequence, and that $Xe$ is the corresponding located lower cut of rationals. It can be shown in \formalsys{CZF} that to any located lower cut there is such a Cauchy sequence and vice versa \cite[\S 3.6]{AR}. This does rely on Countable Choice, which is of course valid here. So, although intensionally $\mathbb{R}$ was constructed in terms of Cauchy sequences, extensionally it is the w-set of all located Dedekind reals.

\section{Regular sets}

The interpretation will transcend \formalsys{CZF} if Meaning is extended to include conditions that cannot be expressed with $\Xi$ alone. For instance, a basic reflection principle should be sufficient to prove the consistency of \formalsys{CZF}. But there is a more natural assumption from which \formalsys{REA} will follow.

The concept of a {\it bar} plays a significant role in Brouwer's school of intuitionism. A bar is a subset of finite sequences of natural numbers, such that every infinite sequence has at least one of its finite prefixes belonging to it. Here a slight liberty is taken with the definition: a bar will be a such that no infinite sequence has all of its prefixes belonging to it. Also, no form of bar induction will be assumed. Instead the bars that are considered are those that are inductively generated in the first place.

Assume some elementary bijection between natural numbers and finite sequences of natural numbers, which will also be written with angle brackets. No confusion should result. The $\star$ operator will denote sequence concatenation. Also $\abs{x}$ will denote the length of the sequence $x$.

\begin{defn}\label{def-o-bar}$\;$\begin{itemize}
\item If $f\tuple{}$ being True would imply that $\Lambda x.f(\tuple{n} \star x)$ are inductively generated bars for every $n$, then, $f$ is an inductively generated bar.
\item All inductively generated bars are inductively generated by this rule.
\end{itemize}\end{defn}
It follows from this definition that if $f\tuple{} = \bot$, then $f$ is an inductively generated bar; and, if $g$ is a sequence of inductively generated bars, then $\sup g$ is an inductively generated bar, where $(\sup g)\tuple{} = \top$ and $(\sup g)(\tuple{n} \star x) = gnx$. But the definition allows for conditions other than $\top$ and $\bot$.
\begin{defn}\label{def-o-omega}$\Omega(f)$ is a condition which expresses that $f$ is an inductively generated bar. It is Meaningful if $fn$ is defined and Meaningful for all $n$. \end{defn}

Formal assumptions on $\mathcal{M}$ and $\mathcal{T}$ are made following Definitions \ref{def-o-bar} and \ref{def-o-omega}. In section 5 the odd numbers were used to denote applications of $\Xi$, and here the numerical convention that $\Omega(f) = 4 \cdot f + 2$ is adopted, leaving unspecified the Meaning of conditions whose numbers are divisible by four. Assume:\begin{align}
\allnat{f} &(\allnat{n} \defd{fn} \wedge \mathcal{M}(fn)) \to \mathcal{M}(\Omega(f)) \label{eq-om}\\
\allnat{f} &\mathcal{M}(\Omega(f)) \wedge (\mathcal{T}(f\tuple{}) \to \allnat{n} \mathcal{T}(\Omega(f^{(n)}))) \to \mathcal{T}(\Omega(f)) \label{eq-oti}\\
(\allnat{f} &\mathcal{M}(\Omega(f)) \wedge (\mathcal{T}(f\tuple{}) \to \allnat{n} \phi(f^{(n)})) \to \phi(f)) \to (\allnat{f} \mathcal{T}(\Omega(f)) \to \phi(f)) \label{eq-ote}
\end{align}
\eqref{eq-ote} is a schema in $\phi$. The expression $\Lambda x.f(\tuple{n} \star x)$ has been abbreviated $f^{(n)}$. Truth for $\Omega$ is given by a standard accessibility definition which is immediately constructively valid.

\begin{lem}\label{lem-o-pi-bar}Suppose $p$ is Meaningful, $x$ is a sequence, and $p$ being True would imply that $x$ is an inductively generated bar. Then, $\Pi'(p,x)$ is an inductively generated bar, where $\Pi'(p,x) = \Lambda m. \Pi(p,xm)$.\end{lem}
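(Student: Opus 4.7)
The plan is to establish $\mathcal{T}(\Omega(\Pi'(p,x)))$ by direct application of the introduction rule \eqref{eq-oti} to $\Pi'(p,x)$. This splits the task into two obligations: (i) $\mathcal{M}(\Omega(\Pi'(p,x)))$, and (ii) $\mathcal{T}(\Pi'(p,x)\tuple{}) \to \allnat{n} \mathcal{T}(\Omega(\Pi'(p,x)^{(n)}))$. The trivial unpacking $\Pi'(p,x)^{(n)} = \Pi'(p,x^{(n)})$ will be used throughout.

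For (i), I would apply \eqref{eq-om} and reduce to showing that each individual condition $\Pi(p,xm)$ is Meaningful. The asymmetric Meaning clause for $\Pi$ (Definition \ref{def-g-pi}) requires $\mathcal{M}(p)$ and the implication $\mathcal{T}(p) \to \mathcal{M}(xm)$. The former is given, and the latter is where the lemma's hypothesis feeds in: $\mathcal{T}(p)$ yields $\mathcal{T}(\Omega(x))$, and the standard ``Truth implies Meaningfulness'' inversion --- itself obtained by using \eqref{eq-ote} with the predicate $\phi(f) = \mathcal{M}(\Omega(f))$ --- combined with the invertibility of \eqref{eq-om}, produces $\mathcal{M}(xm)$ for every $m$.

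For (ii), assume $\mathcal{T}(\Pi(p,x\tuple{}))$, so both $\mathcal{T}(p)$ and $\mathcal{T}(x\tuple{})$. Now $\mathcal{T}(\Omega(x))$ holds outright. The inversion of \eqref{eq-oti} --- derivable from \eqref{eq-ote} by taking the conjunctive predicate $\phi(f) = \mathcal{M}(\Omega(f)) \wedge (\mathcal{T}(f\tuple{}) \to \allnat{n} \mathcal{T}(\Omega(f^{(n)})))$ --- combined with $\mathcal{T}(x\tuple{})$, gives that each $x^{(n)}$ is itself a bar. It remains to conclude $\mathcal{T}(\Omega(\Pi'(p,x^{(n)})))$ for each $n$, which follows from an auxiliary claim: \emph{for every Meaningful $q$ and every $f$ with $\mathcal{T}(\Omega(f))$, $\Pi'(q,f)$ is an inductively generated bar}. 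This auxiliary claim is proved by set induction \eqref{eq-ote} on $f$; its inductive step is itself an instance of the application of \eqref{eq-oti} carried out above, specialised to $\Pi'(q,f)$ using $\mathcal{M}(\Omega(f))$ and the induction hypothesis for the step premise.

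The main obstacle --- really more a subtle design feature --- is the interplay with the asymmetric Meaning clause for $\Pi$: because $x$ is only conditionally a bar, we only get conditional Meaningfulness of each $xm$, and the asymmetry of $\Pi$ is exactly what lets this conditional Meaningfulness certify the unconditional Meaningfulness of each $\Pi(p,xm)$. Without that asymmetry, both the top-level application of \eqref{eq-oti} and the inductive step of the auxiliary claim would stall at the Meaningfulness premise.
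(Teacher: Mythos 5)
Your proposal is correct and follows the same overall strategy as the paper: establish Meaningfulness of each $\Pi(p,xm)$ via the asymmetric Meaning clause of Definition \ref{def-g-pi} (with the lemma's hypothesis supplying $\mathcal{T}(p)\to\mathcal{M}(xm)$), and then exploit the fact that $\mathcal{T}(\Pi'(p,x)\tuple{})$ entails $\mathcal{T}(p)$ so that a single application of \eqref{eq-oti} at the root converts conditional bar-hood into unconditional bar-hood. The one place you diverge is in how the subtree obligation is discharged: the paper observes that $\mathcal{T}(p)$ would make $\Pi'(p,x)$ pointwise Truth-equivalent to $x$ and hence ``also an inductively generated bar,'' quietly appealing to the preservation of bar-hood under pointwise Truth-equivalence, whereas you isolate an explicit auxiliary claim (for Meaningful $q$ and any bar $f$, $\Pi'(q,f)$ is a bar) and prove it by \eqref{eq-ote}. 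Your version makes the hidden induction explicit and is arguably more rigorous; the paper's is shorter but leaves the equivalence-transfer step to the reader. Both rely equally on reading the Meaning clause of Definition \ref{def-o-omega} as invertible (to extract $\mathcal{M}(xm)$ from $\mathcal{T}(\Omega(x))$), so you are not introducing any gap the paper does not already have.
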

\begin{proof} $p$ being True would imply that $x$ is an inductively generated bar. So it would also imply that $x$ is a sequence of Meaningful conditions. That is, it would imply that $xm$ is Meaningful for all $m$, so, by Definition \ref{def-g-pi}, $\Pi(p,xm)$ is (unconditionally) Meaningful for all $m$. Therefore $\Pi'(p,x)$ is a sequence of Meaningful conditions, and it is Meaningful to ask if it is an inductively generated bar.

$p$ being True would also imply that $\Pi(p,xm)$ is True if and only if $xm$ is True. So, $p$ being True would imply that $\Pi'(p,x)$ is equivalent to $x$, and so is also an inductively generated bar. 

Now suppose $\Pi'(p,x)\tuple{}$ is True. It would follow that $p$ is True, so $\Pi'(p,x)$ would be an inductively generated bar, and so $\Pi'(p,x)^{(n)}$ would be an inductively generated bar for all $n$. Since $\Pi'(p,x)\tuple{}$ being True would imply that $\Pi'(p,x)^{(n)}$ is an inductively generated bar for all $n$, it follows that $\Pi'(p,x)$ is (unconditionally) an inductively generated bar. \end{proof}

\begin{lem}Suppose $\tau$ is a sequence of Meaningful conditions. Then there are $B(\tau,x)$ such that:\begin{itemize}
\item $B(\tau,x)$ is Meaningful for all $x$.
\item $B(\tau,x)$ is True if $B(\tau,\el{x}{n})$ is True for all $n$ such that $xn$ is defined and such that $\tau(\cd{x}{n})$ is True.
\item All the $x$'s for which $B(\tau,x)$ is True are inductively generated by this rule.
\end{itemize}\end{lem}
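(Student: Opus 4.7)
The plan is to represent potential descents through the $\el{\cdot}{n}$-structure rooted at $x$ as finite index sequences $\sigma \in \mathbb{N}^*$, label each such $\sigma$ with a Meaningful condition asserting that the corresponding descent is ``valid'' under $\tau$, and set $B(\tau, x)$ to be the $\Omega$-assertion that the resulting sequence of conditions is an inductively generated bar. Let $p_{x,n} = \Pi(\defd{xn}, \tau(\cd{x}{n}))$; this is Meaningful because $\defd{xn}$ is $\Sigma_1$ and its Truth forces $\cd{x}{n}$ to be defined, making $\tau(\cd{x}{n})$ Meaningful by assumption on $\tau$. By general recursion on $\abs{\sigma}$, define a total computable term
\begin{align*}
\varphi(\tau,x,\tuple{}) &= \top, \\
\varphi(\tau,x,\tuple{n}\star\sigma) &= \Pi(p_{x,n}, \varphi(\tau,\el{x}{n},\sigma)),
\end{align*}
using a default value for $\el{x}{n}$ when $xn$ fails to terminate, so the inner call always returns a defined number. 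A straightforward induction on $\abs{\sigma}$, uniform in $x$, then shows $\varphi(\tau,x,\sigma)$ is Meaningful, using that $p_{x,n}$ True forces $xn$ defined so that the recursive call really lands on $\el{x}{n}$. Setting $B(\tau,x) = \Omega(\Lambda\sigma.\varphi(\tau,x,\sigma))$, Meaningfulness is immediate from \eqref{eq-om}.

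Write $f_x = \Lambda\sigma.\varphi(\tau,x,\sigma)$. For the closure clause, assume $\mathcal{T}(B(\tau,\el{x}{n}))$ whenever $\mathcal{T}(p_{x,n})$. Then $f_x^{(n)} = \Pi'(p_{x,n}, f_{\el{x}{n}})$ in the notation of Lemma \ref{lem-o-pi-bar}, and the hypothesis is exactly that $\mathcal{T}(p_{x,n})$ implies $f_{\el{x}{n}}$ is an inductively generated bar; Lemma \ref{lem-o-pi-bar} then makes every $f_x^{(n)}$ an inductively generated bar. Since $f_x\tuple{} = \top$, \eqref{eq-oti} yields $\mathcal{T}(B(\tau,x))$.

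For minimality, fix any $\phi$ closed under the rule of the lemma, and apply the induction schema \eqref{eq-ote} to the auxiliary predicate
$$\psi(f) \iff \forall y.\;(\forall\sigma.\;\mathcal{T}(f\sigma)\leftrightarrow\mathcal{T}(f_y\sigma)) \to \phi(y).$$
In the step, suppose $\mathcal{M}(\Omega(f))$ and $\mathcal{T}(f\tuple{}) \to \forall n.\,\psi(f^{(n)})$, and take any $y$ with $f$ Truth-equivalent to $f_y$. Since $f_y\tuple{} = \top$, the premise fires, delivering $\psi(f^{(n)})$ for every $n$. For each $n$ with $\mathcal{T}(p_{y,n})$, the $\Pi$-wrapping in $f_y(\tuple{n}\star\sigma)$ collapses at the Truth level, so $f^{(n)}$ is Truth-equivalent to $f_{\el{y}{n}}$; whence $\psi(f^{(n)})$ applied to $y' = \el{y}{n}$ gives $\phi(\el{y}{n})$, and closure of $\phi$ then yields $\phi(y)$, so $\psi(f)$. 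Instantiating the conclusion of \eqref{eq-ote} at $f = f_x$ and $y = x$ gives $B(\tau,x) \to \phi(x)$.

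The main obstacle is that $f_x^{(n)}$ does not coincide with $f_{\el{x}{n}}$ as a code for a computable sequence; they only agree on Truth values, and only when $p_{x,n}$ is True. Lemma \ref{lem-o-pi-bar} is exactly what bridges this gap on the closure side, and on the minimality side it is what forces the induction to be phrased through the Truth-equivalence predicate $\psi$ rather than through intensional equality of sequence indices.
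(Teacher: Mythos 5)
Your overall architecture matches the paper's: label finite descent sequences with conditions built by $\Pi$ from the $\tau(\cd{x}{n})$'s, wrap the resulting labelling in $\Omega$, use Lemma \ref{lem-o-pi-bar} for the closure direction, and use the induction schema for $\Omega$ for minimality. But there is a genuine gap at the very first step: the term $\varphi$ is not total computable as you have defined it. To output the number $\varphi(\tau,x,\tuple{n}\star\sigma) = \Pi(p_{x,n},\varphi(\tau,\el{x}{n},\sigma))$ you must first compute $\el{x}{n} = (xn)_L$ (and likewise $\cd{x}{n}$ inside $p_{x,n}$), which requires the computation $xn$ to terminate. Your proposed repair, ``using a default value for $\el{x}{n}$ when $xn$ fails to terminate,'' is not an effective operation: deciding whether to substitute the default is the halting problem. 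This is precisely the obstruction the paper flags repeatedly (see the remark after Theorem \ref{thm-g-un} and after Theorem \ref{thm-g-coll}): one cannot patch a partial computation by case analysis on its termination.

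The paper's device is to enlarge the index set of the bar: descents are indexed by pairs $\tuple{n,u}$ where $u$ is a candidate halting certificate, and the defining clause case-splits on the \emph{decidable} predicate $\mathbf{T}(x,n,u)$. When $\mathbf{T}(x,n,u)$ holds, the value $xn$ is recovered as $\mathbf{U}(u)$ without running $x$ at all, and the label is $\Pi(\tau(\mathbf{U}(u)_R), A(\tau,\mathbf{U}(u)_L,z))$; when it fails, the label is $\bot$, which is harmlessly an inductively generated bar. This makes the labelling total computable and simultaneously eliminates any need for a separate $\defd{xn}$ conjunct in the condition. Your closure and minimality arguments would survive essentially intact once $\varphi$ is replaced by such a certificate-indexed version (the spurious $\tuple{n,u}$ with $\mathbf{T}(x,n,u)$ false contribute only $\bot$-labelled, hence trivially barred, branches), so the fix is local; but without it the object $B(\tau,x)$ you construct does not exist as a condition in the sense of the formal system.
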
\begin{proof} Define the conditions $B(\tau,x)$ as follows:\begin{align*}
B(\tau,x) &= \Omega(\Lambda z.A(\tau,x,z)) \\
A(\tau,x,z) &= \top, \text{ if } \abs{z}<2 \\
A(\tau,x,\tuple{n,u} \star z) &= \Pi(\tau(\mathbf{U}(u)_R), A(\tau,\mathbf{U}(u)_L,z)), \text{ if } \mathbf{T}(x,n,u), \text{ else } \bot
\end{align*}
The conditions $A(\tau,x,z)$ are either equal to $\top$ or else is formed from $\Pi$ and $\tau n$ for some values of $n$. Since $\tau$ is a sequence of Meaningful conditions, it follows that $A(\tau,x,z)$ is always Meaningful. So by Definition \ref{def-o-omega}, $B(\tau,x)$ is Meaningful. 

Now, $A(\tau,x,z) = \top$ if $\abs{z}<2$, so by the definition, $\Lambda z.A(\tau,x,z)$ is an inductively generated bar if and only if $\Lambda z.A(\tau,x,\tuple{n,u} \star z)$ is an inductively generated bar for all $n$ and $u$. Also, unless $\mathbf{T}(x,n,u)$, 
 $A(\tau,x,\tuple{n,u} \star z) = \bot$, so it is trivially an inductively generated bar, by the definition. If $\mathbf{T}(x,n,u)$ then  $A(\tau,x,\tuple{n,u} \star z) = \Pi(\tau(\mathbf{U}(u)_R), A(\tau,\mathbf{U}(u)_L,z))$. By the previous lemma, $\Lambda z.\Pi(\tau(\mathbf{U}(u)_R), A(\tau,\mathbf{U}(u)_L,z))$ is an inductively generated bar if and only if $\tau(\mathbf{U}(u)_R)$ being True would imply that $\Lambda z.A(\tau,\mathbf{U}(u)_L,z)$ is an inductively generated bar.

It follows, then, $\Lambda z.A(\tau,x,z)$ is an inductively generated bar if $\Lambda z.A(\tau,\el{x}{n},z)$ is an inductively generated bar for all $n$ such that $xn$ is defined and such that $\tau(\cd{x}{n})$ is True; and, this gives all the $x$'s for which $\Lambda z.A(\tau,x,z)$ is an inductively generated bar. Finally, $B(\tau,x)$ is True if and only if $\Lambda z.A(\tau,x,z)$ is an inductively generated bar, and this matches the last two clauses of the lemma. \end{proof}

\begin{lem}\label{lem-o-vdef} Suppose $\tau$ is a sequence of Meaningful conditions. Then there are conditions $W(\tau,x)$ such that:\begin{itemize}
\item $W(\tau,x)$ is Meaningful for all $x$.
\item $W(\tau,x)$ is True if $xn$ is defined for all $n$, and if $W(\tau,\el{x}{n})$ is True for all $n$ such that $\tau(\cd{x}{n})$ is True.
\item All the $x$'s for which $W(\tau,x)$ is True are inductively generated by this rule.
\end{itemize}\end{lem}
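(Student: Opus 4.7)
The plan is to follow the previous lemma's template $B(\tau,x) = \Omega(\Lambda z.A(\tau,x,z))$, but augment the inner computation so that the generated tree encodes both descent steps and per-level totality checks. Define $A_W(\tau,x,z)$ by cases: $A_W = \top$ if $\abs{z} < 2$; otherwise write $z = \tuple{n,t}\star z'$ and dispatch on $t$. In \emph{totality mode} ($t=0$), return $\top$ iff $z' = \tuple{k_0,\ldots,k_\ell}$ satisfies $k_i = i$ and $\neg\mathbf{T}(x,n,i)$ for every $i\le \ell$; otherwise $\bot$. In \emph{descent mode} ($t = u+1$), if $\mathbf{T}(x,n,u)$ return $\Pi(\tau(\cd{x}{n}), A_W(\tau,\el{x}{n},z'))$, else $\bot$. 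Set $W(\tau,x) = \Omega(\Lambda z.A_W(\tau,x,z))$.

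The recursion terminates since $\abs{z'} = \abs{z}-2$; by induction on $\abs{z}$ the output is always $\top$, $\bot$, or a $\Pi$ whose arguments are Meaningful by hypothesis on $\tau$ and the inductive hypothesis. Thus $A_W(\tau,x,\cdot)$ is a sequence of Meaningful conditions, and Definition \ref{def-o-omega} gives the Meaningfulness of $W(\tau,x)$, settling the first bullet.

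For the remaining bullets, unfold the inductively-generated-bar clauses as in the previous lemma. Since $A_W(\tau,x,\tuple{}) = \top$, bar-ness reduces to bar-ness of every $A_W^{(k,t)}$. In totality mode ($t=0$), the forced structure leaves only the single live path $\tuple{k,0,0,1,2,\ldots}$, and this dies (at depth $i$) iff $\mathbf{T}(x,k,i)$ for some $i$; so the sub-tree is an ind.-gen.-bar iff $xk$ is defined. In descent mode ($t = u+1$), the case $\neg\mathbf{T}(x,k,u)$ is a bar trivially, and the case $\mathbf{T}(x,k,u)$ reduces via Lemma \ref{lem-o-pi-bar} to ``$\tau(\cd{x}{k})$ True implies $W(\tau,\el{x}{k})$ True''. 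Conjoining over all $k$ and $t$, $W(\tau,x)$ is True iff $x$ is total and, for every $n$ with $\tau(\cd{x}{n})$ True, $W(\tau,\el{x}{n})$ is True. This is exactly the closure rule of the second bullet, and the third bullet is immediate from \eqref{eq-ote} applied to this equivalent reformulation of $W$.

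The main obstacle is the totality sub-tree. A naive design, where the branches of the sub-tree ran over arbitrary candidate witnesses, fails: even when $xk$ is defined, an infinite path can avoid the true witness and stay $\top$ forever, so the sub-tree would not be a bar. Forcing the extensions into the unique linear schedule $k_\ell = \ell$ (and sending every other branch to $\bot$ immediately) collapses the sub-tree to a single unavoidable search, whose termination coincides with definedness of $xk$; this is what makes totality expressible at all through $\Omega$.
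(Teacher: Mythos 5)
Your construction is correct in substance but takes a genuinely different route from the paper's. The paper sets $W(\tau,x) = \Pi(B(\tau,x),L(\tau,x))$: it reuses the bar $B$ of the previous lemma as a well-foundedness guard, and expresses totality together with the recursive clause by a general-recursive $\Xi$-condition $L$ (with $TOT(x) = \Xi(\Lambda n.\top,\Lambda n.K(\top,xn))$ exploiting strictness of application to say $\defd{xn}$); the Meaningfulness of $L$ is then obtained by induction along $B$, and the asymmetric Meaning clause of $\Pi$ makes $W$ unconditionally Meaningful. You instead fold everything, including totality, into a single $\Omega$-bar whose node conditions are manifestly Meaningful, so no general recursion on conditions and no guarded-Meaningfulness step is needed; the price is that you must show that bar-hood of your linear-search subtree is constructively \emph{equivalent} to $\defd{xk}$. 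Your remark about why the naive witness-branching tree fails is exactly right, and the deterministic schedule fixes it; but note that the direction ``bar implies $\defd{xk}$'' is the one delicate point: reading bar-hood merely as ``every infinite path is barred'' yields only $\neg\neg\defd{xk}$, and to get $\defd{xk}$ outright you need to run \eqref{eq-ote} along the search path, using the decidability of $\mathbf{T}(x,k,i)$ at each node to either exhibit a witness or descend to the child. Similarly, the minimality clause (third bullet) and the converse of Lemma \ref{lem-o-pi-bar} that your ``iff'' implicitly uses both require threading \eqref{eq-ote} through the two-level tree structure and the totality subtrees simultaneously; this is no worse than the corresponding gloss in the paper's own proofs, but it is where the real work sits in your version.
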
\begin{proof}
Define the conditions $W(\tau,x)$ as follows:\begin{align*}
W(\tau,x) &= \Pi(B(\tau,x),L(\tau,x)) \\
L(\tau,x) &= \Pi(TOT(x),J(\tau,x)) \\
TOT(x) &= \Xi(\Lambda n.\top, \Lambda n.K(\top,xn)) \\
K(a,b) &= a \\
J(\tau,x) &= \Xi(\Lambda n.\tau(\cd{x}{n}), \Lambda n.L(\tau,\el{x}{n})) 
\end{align*}
Here, $B$ is given by the previous lemma. $TOT(x)$ asserts that $xn$ is always defined. $J(\tau,x)$ asserts that $L(\tau,\el{x}{n})$ is True for all $n$ such that $\tau(\cd{x}{n})$ is True. Therefore, $L(\tau,x)$ asserts that $xn$ is defined for all $n$, and that $L(\tau,\el{x}{n})$ is True for all $n$ such that $\tau(\cd{x}{n})$ is True. Provided that it is Meaningful, it has the correct Truth conditions.

$TOT(x)$ is always Meaningful. If $TOT(x)$ is True and $L(\tau,\el{x}{n})$ is Meaningful for all $n$ such that $\tau(\cd{x}{n}$ is True, then $J(\tau,x)$ is Meaningful. So, if $L(\tau,\el{x}{n})$ is Meaningful for all $n$ such that $\tau(\cd{x}{n})$ is defined and True, then $L(\tau,x)$ is Meaningful. 

The previous lemma establishes an induction principle which matches a recursive condition for $L(\tau,x)$ to be Meaningful. So, if $B(\tau,x)$ is True, then $L(\tau,x)$ is Meaningful. It follows that $W(\tau,x)$ is always Meaningful, as claimed. \end{proof}

\begin{lem}Suppose $\tau$ is a sequence of Meaningful conditions, and that there is a number $F$ such that $\tau F$ is not True. Then there is a v-set $V(\tau)$ containing all and only the v-sets that can be ``built from'' $\tau$. That is: \begin{itemize}
\item If all the conditions of $y$ are in $\tau$ (that is, for all $n$ there is an $m$ such that $\tau m$ is True if and only if $\cd{y}{n}$ is True), and all the elements of $y$ are in $V(\tau)$, then $y$ is in $V(\tau)$.
\item All the elements of $V(\tau)$ are inductively generated by this rule.
\end{itemize}
\end{lem}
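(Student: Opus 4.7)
The plan is to build $V(\tau)$ from the predicate $W(\tau,\cdot)$ of the previous lemma. Define, by general recursion, an operator $Y$ that converts an ``index-based'' code $x$ into an actual v-set by substituting each condition through $\tau$:
$$Y(x) = \Lambda n.\tuple{Y(\el{x}{n}),\,\tau(\cd{x}{n})}$$
and then set $V(\tau) = \Lambda x.\tuple{Y(x),\,W(\tau,x)}$.

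To see that $V(\tau)$ is a v-set, observe that $W(\tau,x)$ is always Meaningful by the previous lemma, and $Y(x)$ is a v-set whenever $W(\tau,x)$ is True, by the induction principle that $W$ carries (its third clause): in the inductive step the conditions $\tau(\cd{x}{n})$ are Meaningful because $\tau$ ranges over Meaningful conditions, while the members $Y(\el{x}{n})$ for $n$ with $\tau(\cd{x}{n})$ True are v-sets by the inductive hypothesis, so \eqref{eq-vi} yields $\mathcal{V}(Y(x))$.

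For the closure bullet, suppose $y$ is a v-set with each $\cd{y}{n}$ equivalent in truth value to some value of $\tau$, and each element of $y$ extensionally in $V(\tau)$. Apply \formalsys{ECT_V} twice. First, to $\allnat{n}\somenat{m}(\mathcal{T}(\tau m)\leftrightarrow\mathcal{T}(\cd{y}{n}))$, whose matrix is almost-negative (a biconditional of $\mathcal{T}$-atoms), extracting a total computable $g$ with $\tau(gn)\leftrightarrow\cd{y}{n}$; the hypothesis is satisfiable when $\cd{y}{n}$ is not True precisely because $F$ is available as an index with $\tau F$ not True. Second, to $\allnat{n}(\mathcal{T}(\cd{y}{n})\to\somenat{x'}(W(\tau,x')\wedge\el{y}{n}\simeq Y(x')))$, whose antecedent $\mathcal{T}(\cd{y}{n})$ is almost-negative, extracting a partial computable $h$ such that whenever $\cd{y}{n}$ is True, $hn$ is defined with $W(\tau,hn)$ and $Y(hn)\simeq\el{y}{n}$. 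Setting $x = \Lambda n.\tuple{hn,\,gn}$ (padding $hn$ with a default where undefined), the defining rule for $W$ yields $W(\tau,x)$, and matching members gives $Y(x)\simeq y$, witnessing $y\in V(\tau)$.

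Minimality follows again by $W$-induction: for any class $S$ closed under the lemma's generating rule, the property ``$Y(x)\in S$'' is closed under $W$'s rule, since $Y(x)$'s conditions are literally values of $\tau$ and its members $Y(\el{x}{n})$ are in $S$ by the inductive hypothesis. The main obstacle is the second \formalsys{ECT_V} application: the hypothesis ``$\el{y}{n}\in V(\tau)$'' must be unfolded through the intensional structure of $V(\tau)$ into the existential $\somenat{x'}(W(\tau,x')\wedge\el{y}{n}\simeq Y(x'))$, with $\simeq$ optionally replaced by $\mathcal{R}$ via Lemma \ref{lem-g-eq} so that the whole antecedent sits cleanly in the almost-negative fragment.
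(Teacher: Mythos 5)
Your proposal is correct and matches the paper's proof essentially step for step: the same operators $Y$ and $V(\tau)$, the same use of the previous lemma's induction principle for $W$ to show $V(\tau)$ is a v-set, the same two applications of \formalsys{ECT_V} (one for the condition-matching index, one for the element-witnessing index) inside a Set Induction, and the same minimality argument. The one imprecise gloss is the parenthetical about ``padding $hn$ with a default where undefined'': undefinedness is not decidable, so the correct move (which the paper makes) is to adapt Lemma \ref{lem-g-part}, diagonalising over computation traces and emitting the non-True index $F$ as the condition whenever the trace fails to verify---which is exactly why the hypothesis supplies $F$.
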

\begin{proof} Define the v-sets $Y(\tau,x)$ and $V(\tau)$ as follows:\begin{align*}
Y(\tau,x) &= \Lambda n.\tuple{Y(\tau,\el{x}{n}),\tau(\cd{x}{n})} \\
V(\tau) &= \Lambda x.\tuple{Y(\tau,x),W(\tau,x)}
\end{align*}
Here $W$ is given by the previous lemma. Take for an inductive hypothesis that $xn$ is defined for all $n$, and that $Y(\tau,\el{x}{n})$ is a v-set for all $n$ such that $\tau(\cd{x}{n})$ is True. $Y(\tau,x)$ would be v-set. So, from the induction principle established by the previous lemma, it follows that $Y(\tau,x)$ is a v-set for all $x$ such that $W(\tau,x)$ is True. This establishes that $V(\tau)$ is a v-set.

Suppose $y$ is a v-set and can be built from $\tau$. Then for all $n$, there is an $m$ such that $\cd{y}{n}$ is True if and only if $\tau m$ is True. So by \formalsys{ECT_V}, there is an $e$ such that
$$\allnat{n} \defd{en} \wedge (\mathcal{T}(\cd{y}{n}) \leftrightarrow \mathcal{T}(\tau(en)))$$
Also since $y$ can be built from $\tau$, every element of $y$ can be built from $\tau$. Take, for a set-inductive hypothesis, that for every element $b$ of $y$ there is an $a$ such that $W(\tau,a)$ is True and $b \simeq Y(\tau,a)$. So again by \formalsys{ECT_V}, there is an $f$ such that
$$\allin{n}{IND(y)} \defd{fn} \wedge \mathcal{T}(W(\tau,fn)) \wedge \el{y}{n} \simeq Y(\tau,fn)$$
So, define:\begin{align*}
z &= \Lambda n.\tuple{\mathbf{U}(n_R)_L, T_0(x,n_L,n_R,\mathbf{U}(n_R)_R)} \\
T_0(x,n,u,p) &= p \text{ if } \mathbf{T}(x,n,u) \text{ else } F \\
x &= \Lambda n.\tuple{fn,en} 
\end{align*}
$en$ is always defined. If $\tau(en)$ is True then so is $\cd{y}{n}$, so $fn$ is defined and $W(\tau,fn)$ is True. The proof of Lemma \ref{lem-g-part} applies with minor changes to show that $zn$ is defined for all $n$, and that, if $\tau(\cd{z}{n})$ is True for some $n$, then $\el{z}{n} = fn'$ for some $n' \in IND(y)$. By the properties of $f$, $W(\tau,fn')$ is True for all $n' \in IND(y)$. So, by the previous lemma, $W(\tau,z)$ is True, and as established above, this means that $Y(\tau,z)$ is a v-set.

Conversely, for all $n' \in IND(y)$ there is an $n$ such that $\tau(\cd{z}{n})$ is True and $\el{z}{n} = fn'$, and from the properties of $f$ it follows that $\el{y}{n'} \simeq Y(\tau,\el{z}{n})$. So, $Y(\tau,z) \simeq y$. Finally, by Set Induction, it follows that for any v-set $y$ which can be built from $\tau$, there is a $z$ such that $W(\tau,z)$ is True and $Y(\tau,z) \simeq y$, which means that $V(\tau)$ contains $y$. $V(\tau)$ contains only such v-sets, because when $W(\tau,x)$ is True, $Y(\tau,x)$ is well-founded and directly built from $\tau$. \end{proof}

\begin{lem}\label{lem-o-reg} Suppose, as in the previous lemma, that $\tau$ is a sequence of Meaningful conditions, and that there is a number $F$ such that $\tau F$ is not True. Then $V(\tau)$, the v-set of all v-sets which can be built from $\tau$, is in fact ``regular'': inhabited, transitive, and a model of Strong Collection. That is, given an element $x$ of $V(\tau)$ and any v-set $R$ which is a binary relation such that to every element of $x$ there is an element of $V(\tau)$ related to it by $R$, then, there is an element $y$ of $V(\tau)$ which contains only v-sets related to elements of $x$, and which contains, for each element of $x$, at least one v-set related to it. \end{lem}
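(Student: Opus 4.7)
The plan is to split the claim into three parts---inhabitedness, transitivity, and Strong Collection---where the first two follow almost immediately from the characterization of $V(\tau)$ given in the previous lemma, and Strong Collection is the real content. For inhabitedness, I would observe that $\empty = \Lambda n.\tuple{z,\bot}$ lies in $V(\tau)$: all of its conditions are $\bot$, which is matched in $\tau$ by $F$ (both being not True), and the ``all elements in $V(\tau)$'' clause is vacuous. For transitivity, the characterization directly gives that every element of an element of $V(\tau)$ is itself in $V(\tau)$, with the extensionality of v-sets bridging intensional and extensional membership.

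For Strong Collection, given $x \in V(\tau)$ and a binary-relation v-set $R$ whose $R$-image of each element of $x$ meets $V(\tau)$, I would mimic the proof of Theorem \ref{thm-g-coll} but produce witnesses lying explicitly in $V(\tau)$. The first step is to rewrite ``$b \in V(\tau)$'' in a form to which \formalsys{ECT_V} applies: by the previous lemma it is equivalent to $\somenat{a} \mathcal{T}(W(\tau,a)) \wedge b \simeq Y(\tau, a)$, and by Lemmas \ref{lem-g-eq} and \ref{lem-g-req} the extensional equation $b \simeq Y(\tau, a)$ is in turn $\somenat{f}\mathcal{T}(\overline{\mathcal{R}}(f, b, Y(\tau, a)))$, whose body is almost-negative. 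Applying \formalsys{ECT_V} then yields a computable $e$ such that for each $n \in IND(x)$, $en$ is defined, $W(\tau, en)$ is True, and $\stuple{\el{x}{n}, Y(\tau, en)} \in R$. I would then set $y = \mathbf{D}\Lambda n.\tuple{Y(\tau, en), \cd{x}{n}}$, invoking Lemma \ref{lem-g-part} (which applies because when $\cd{x}{n}$ is True, $en$ is defined and $W(\tau, en)$ is True, so $Y(\tau, en)$ is a v-set); the elements of $y$ are then exactly the $Y(\tau, en)$ for $n \in IND(x)$, each lying in $V(\tau)$ and each related to $\el{x}{n}$ by $R$, which is what Strong Collection requires.

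To close, I would verify $y \in V(\tau)$ using the characterization in the previous lemma: its elements are in $V(\tau)$ as just noted, and its conditions, by the $\mathbf{D}$ construction, are each either $\cd{x}{n_L}$ or $\bot$. Since $x \in V(\tau)$ there is a computable selector matching each $\cd{x}{n_L}$ to a $\tau$-index, and $F$ matches $\bot$, so the ``conditions in $\tau$'' clause holds computably. The main obstacle I anticipate is this \formalsys{ECT_V} step: compressing ``$b \in V(\tau)$'' through the previous lemma together with Lemmas \ref{lem-g-eq} and \ref{lem-g-req} into a single almost-negative antecedent. Once that reduction is in hand, the rest of the construction closely parallels Theorem \ref{thm-g-coll}, and inhabitedness and transitivity are essentially automatic.
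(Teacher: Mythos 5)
Your proof is correct, and its core idea---re-running the proof of Theorem \ref{thm-g-coll} with witnesses drawn from $V(\tau)$---is the same as the paper's, but the execution differs in an instructive way. The paper does not redo the construction at all: it observes that the previous lemma shows $V(\tau)$, $\Lambda n.\top$ and $\tau$ satisfy exactly the closure and induction properties \eqref{eq-vi} and \eqref{eq-ve} that $\mathcal{V}$, $\mathcal{M}$ and $\mathcal{T}$ do, that the proof of Theorem \ref{thm-g-coll} (with Lemma \ref{lem-g-part}) used only those properties plus the existence of a Meaningful non-True condition ($F$ playing the role of $\bot$), and that $\psi$ was allowed to be arbitrary so the external relation $R$ is admissible; hence the entire proof relativises, and the collection set it produces is automatically of the form $Y(\tau,z)$ with $W(\tau,z)$ True, i.e.\ already an element of $V(\tau)$. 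Your version instead works in the ambient universe of v-sets, applies \formalsys{ECT_V} there, builds $y$ with $\mathbf{D}$, and then must verify $y \in V(\tau)$ by hand via the closure clause of the previous lemma---a step the relativisation gets for free, but which you carry out correctly (the conditions of $y$ are each $\bot$ or a condition of $x$, matched in $\tau$ by $F$ or by $x$'s own matching; the elements are the $Y(\tau,en)$). One small correction: the obstacle you single out---compressing ``$b \in V(\tau)$'' into almost-negative form---does not actually arise. In the \formalsys{ECT_V} application the antecedent is only $n \in IND(x)$, i.e.\ $\mathcal{T}(\cd{x}{n})$, which is almost-negative; the clause $\mathcal{T}(W(\tau,m)) \wedge \stuple{\el{x}{n},Y(\tau,m)} \in R$ sits in the consequent position $\psi$, which the schema leaves unconstrained, so no detour through Lemmas \ref{lem-g-eq} and \ref{lem-g-req} is needed there. (Strictly, you should also first replace $x$ by an intensional representative in $EL(V(\tau))$ before reading off its conditions, but that is routine given the substitution property of $\simeq$.)
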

\begin{proof} $F$ is a condition in $\tau$ which is not True, so the empty v-set, at least, can be built from it, and therefore $V(\tau)$ is inhabited. Transitivity follows from the fact that if a v-set can be built from $\tau$, then by definition so can all its elements. The previous lemma shows that $V(\tau)$, $\Lambda n.\top$ and $\tau$ mirror the assumptions \eqref{eq-vi} and \eqref{eq-ve} on $\mathcal{V}$, $\mathcal{M}$, and $\mathcal{T}$, respectively. The proof of Theorem \ref{thm-g-coll} (including Lemma \ref{lem-g-part}) only required these assumptions plus the fact that $\bot$ is Meaningful and not True. $R$ can be used as the relation $\psi$, even though $R$ may be external to $V(\tau)$, because $\psi$ was allowed to be arbitrary. So, Strong Collection applies inside $V(\tau)$. \end{proof}

\begin{lem}\label{lem-o-mseq} For every v-set, there is a sequence of Meaningful conditions from which the v-set can be built.\end{lem}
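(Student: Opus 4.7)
The plan is to proceed by Set Induction on $y$. For the inductive step, I assume that every element $a$ of $y$ admits a sequence of Meaningful conditions from which $a$ can be built, and I must produce such a sequence for $y$ itself. First I would use Lemma~\ref{lem-g-real} to rewrite the inductive hypothesis in the form $\allin{a}{y}\somenat{e}\phi'(e,a)$ with $\phi'$ almost-negative, so that \formalsys{ECT_V} applies and supplies a single computable function which, on each $n \in IND(y)$, outputs a code $\sigma_n$ for a sequence of Meaningful conditions from which $\el{y}{n}$ can be built.

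Next I would assemble $\tau$ by interleaving three streams into one computable sequence: the conditions $\cd{y}{n}$ themselves, the ``wrapped'' conditions $\Pi(\cd{y}{n},(\sigma_n)k)$ indexed by $n,k$, and a single $\bot$ entry. The first stream makes every condition of $y$ literally present in $\tau$; the $\bot$ supplies the non-True condition $F$ needed to form $V(\tau)$ per the lemma defining it; the wrapped stream is what lets us eventually embed each $V(\sigma_n)$ into $V(\tau)$. Wrapping by $\Pi$ is essential and mirrors the device already used in Theorem~\ref{thm-g-un}: by Definition~\ref{def-g-pi}, $\Pi(\cd{y}{n},(\sigma_n)k)$ is Meaningful whenever $\cd{y}{n}$ is Meaningful and its Truth would imply $(\sigma_n)k$ is Meaningful, which is exactly the situation here. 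Thus $\tau$ is unconditionally a sequence of Meaningful conditions.

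It then remains to verify that $y$ is built from $\tau$. The conditions of $y$ appear in $\tau$ by construction. For the elements, I would prove a small monotonicity claim by Set Induction: if every condition of $\sigma$ is Truth-equivalent to some condition of $\tau$, then $V(\sigma) \subseteq V(\tau)$. Applying this to $\sigma = \sigma_n$---using that whenever $\cd{y}{n}$ is True, the condition $(\sigma_n)k$ has the same Truth-value as its $\Pi$-wrapped $\tau$-entry---yields $\el{y}{n} \in V(\tau)$ for every $n \in IND(y)$, completing the induction.

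The main obstacle is the partiality of the function extracted via \formalsys{ECT_V}: it need not produce anything sensible on indices where $\cd{y}{n}$ fails, yet $\tau$ must be \emph{unconditionally} a sequence of Meaningful conditions. The $\Pi$-wrapping is precisely the mechanism that converts the conditional definedness and Meaningfulness of $(\sigma_n)k$ into an unconditional Meaningful condition, while simultaneously preserving Truth-equivalence on the indices that matter for showing $\el{y}{n} \in V(\tau)$.
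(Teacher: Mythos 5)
Your overall strategy is the paper's: Set Induction, with the new sequence of conditions obtained by adjoining the conditions $\cd{y}{n}$ of $y$ itself to $\Pi$-wrapped copies of the conditions serving the elements, the asymmetric Meaning clause of Definition \ref{def-g-pi} making the wrapped entries unconditionally Meaningful, and an implicit monotonicity of ``built from'' finishing the argument. However, there is a genuine gap in your handling of what you yourself call the main obstacle. The $\Pi$-wrapping converts \emph{conditional Meaningfulness} into unconditional Meaningfulness, but it does nothing for \emph{conditional definedness}: to place the number $\Pi(\cd{y}{n},(\sigma_n)k)$ into the stream $\tau$ you must first compute $(\sigma_n)k=(en)k$, and \formalsys{ECT_V} only guarantees $\defd{en}$ for $n\in IND(y)$. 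Since Truth of $\cd{y}{n}$ is in general not decidable (nor even semi-decidable), your interleaved $\tau$ is only a partial sequence, whereas the construction of $V(\tau)$ requires $\tau m$ to be defined and Meaningful for every $m$. The repair is the same device as in Lemma \ref{lem-g-part}: index the wrapped stream by triples $\tuple{n,u,k}$, decide $\mathbf{T}(e,n,u)$ primitively, emit $\Pi(\cd{y}{n},(\mathbf{U}(u))k)$ when the certificate checks and $\bot$ otherwise. With that change your argument goes through.

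It is worth noting that the paper avoids both this issue and the appeal to \formalsys{ECT_V} entirely. Instead of extracting the witnesses $\sigma_n$ from the inductive hypothesis by Church's Thesis, it defines a single total function $h(x)=\Lambda m.h'(x,m)$ by general recursion, where $h'$ is indexed by finite paths $\tuple{n_1,u_1}\star\tuple{n_2,u_2}\star\cdots$ descending through $x$, each step carrying its own Kleene certificate $u_i$ and contributing one layer of $\Pi(\cd{x}{n_i},-)$; paths with bad certificates default to $\bot$. The Set Induction then only has to verify that this explicitly given $h(x)$ works, assuming $h(\el{x}{n})$ works for the elements. This yields a uniform, unconditionally total $\tau=h(x)$ in one stroke, which is both simpler and slightly stronger than the bare existential statement your route establishes.
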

\begin{proof} Define $h(x)$ as follows:\begin{align*}
h(x) &= \Lambda m. h'(x,m) \\
h'(x,m) &= \top \text{ if } \abs{m} < 2 \\
h'(x,\tuple{n,u} \star m) &= \Pi(\mathbf{U}(u)_R,h'(\mathbf{U}(u)_L,m)), \text{ if } \mathbf{T}(x,n,u), \text{ else } \bot \\
\end{align*}
Now, $h'(x,\tuple{n,u} \star m) = \Pi(\mathbf{U}(u)_R,h'(\mathbf{U}(u)_L,m))$ if $\mathbf{T}(x,n,u)$. $h'(\mathbf{U}(u)_L,\tuple{}) = \top$, so $h'(x,\tuple{n,u})$ is True if and only if $\mathbf{U}(u)_R$ is True and $\mathbf{T}(x,n,u)$. So $h(x)$ contains the conditions $\cd{x}{n}$ for all $n$ such that $xn$ is defined. If $\cd{x}{n}$ is True, then, $\Pi(\cd{x}{n},h'(\el{x}{n},m))$ is True if and only if $h'(\el{x}{n},m)$ is True. So, $h(x)$ also contains all the conditions in $h(\el{x}{n})$ for all $n$ such that $xn$ is defined and $\cd{x}{n}$ is True.  

Given a v-set $x$, take for a set-inductive hypothesis that for every element $y$ of $x$, $h(y)$ is a sequence of Meaningful conditions from which $y$ can be built. It follows that $h(x)$ is a sequence of Meaningful conditions. $h(x)$ contains all the conditions in $h(\el{x}{n})$ for all $n \in IND(x)$, so every element of $x$ can be built from $h(x)$. $h(x)$ also contains the conditions $\cd{x}{n}$. So, $x$ can be built from $h(x)$. 

By Set Induction, then, any v-set $x$ can be built from the sequence of Meaningful conditions $h(x)$. \end{proof}

\begin{thm}[Regular Extension] Every v-set is contained in a regular v-set.\end{thm}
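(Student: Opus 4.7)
The plan is to assemble this directly from Lemmas \ref{lem-o-mseq} and \ref{lem-o-reg} together with the ``previous lemma'' (the one describing $V(\tau)$), with only a small adjustment to ensure the hypothesis of Lemma \ref{lem-o-reg} is met.

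Given a v-set $x$, I would first invoke Lemma \ref{lem-o-mseq} to obtain a sequence $h(x)$ of Meaningful conditions from which $x$ can be built. The sequence $h(x)$ need not contain a non-True condition, and Lemma \ref{lem-o-reg} requires one. So I would augment the sequence: define $\tau = \Lambda n.\, \text{if } n=0 \text{ then } \bot \text{ else } h(x)(n-1)$. Since $\bot$ is Meaningful by Definition \ref{def-g-pi} and $h(x)$ is a sequence of Meaningful conditions, $\tau$ is again a sequence of Meaningful conditions, and $\tau 0 = \bot$ is by definition not True, so the hypothesis of Lemma \ref{lem-o-reg} is satisfied with $F=0$.

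Next I would observe that anything buildable from $h(x)$ is also buildable from $\tau$ (the building rule allows any conditions from the sequence to be used, and adding a condition that is never True at index $0$ does not remove any conditions that were previously available). In particular $x$ itself can still be built from $\tau$. By the lemma preceding Lemma \ref{lem-o-reg}, this means $x \in V(\tau)$.

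Finally I would apply Lemma \ref{lem-o-reg} to conclude that $V(\tau)$ is regular: inhabited, transitive, and satisfies Strong Collection internally. Therefore $V(\tau)$ is the desired regular v-set containing $x$. There is no real obstacle here — all the work has been done in the preceding lemmas; the only thing to watch is that a non-True Meaningful condition is present in the generating sequence, which is why the small prepending trick with $\bot$ is needed.
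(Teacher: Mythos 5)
Your proposal is correct and matches the paper's own proof: the paper likewise obtains the sequence from Lemma \ref{lem-o-mseq}, ``trivially extends'' it to include $\bot$ (your prepending trick makes this explicit), and then applies Lemma \ref{lem-o-reg} to conclude that $V(\tau)$ is a regular v-set containing $x$. No gaps.
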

\begin{proof} Given any v-set $x$, there is a sequence of Meaningful conditions from which $x$ can be built. This can be trivially extended to include $\bot$. There is a regular v-set containing all and only the v-sets which can be built from this sequence. By construction of this sequence, $x$ itself is such a v-set, so it is contained in this regular v-set. \end{proof}

\section{\formalsys{CZF+REA} through the looking glass}

The interpretation of section 7 can be extended to include the assumptions \eqref{eq-om}, \eqref{eq-oti}, and \eqref{eq-ote}. First an inductive class definition is given to capture the Truth conditions of $\Omega$. Define the classes: \begin{align*}
\Upsilon(M) &= \setb{\stuple{B,f}}{f \in W(M) \wedge B \simeq V(f,M)} \\
V(f,M) =& \setb{f^{(n)}}{n \in \omega \wedge \stuple{f\tuple{}, \neumarral{1}} \in M} \\
W(M) =& \setb{f \in \omega}{\allin{n}{\omega} \defd{fn} \wedge \someset{h} \stuple{fn,h} \in M}
\end{align*}
Form the least $\Upsilon(M)$-closed class, $B(M)^\infty$. The role of \formalsys{REA} will be to show that $B(M)^\infty$ is in fact a set when $M$ is. Then define the class:
$$\Phi' = \Phi \cup \setb{\stuple{M,a}}{\somein{f}{W(M)} a \simeq \stuple{\Omega(f),\setb{\neumarral{0}}{f \in B(M)^\infty}}}$$
$\Phi$ is the same operator as in section 7, but now $M^\infty$ is the least $\Phi'$-closed class. $V^\infty$ is defined the same way, and $\mathcal{V}$, $\mathcal{M}$, and $\mathcal{T}$ are interpreted the same way.

\begin{lem}\label{lem-cr-binf} Suppose that $M$ is a subset of $\omega \times \p1$. Then:\begin{itemize}
\item If $f \in W(M)$, and if $\stuple{f\tuple{},\neumarral{1}} \in M$ implies that $f^{(n)} \in B(M)^\infty$ for all $n \in \omega$, then, $f \in B(M)^\infty$.
\item All the elements of $B(M)^\infty$ are inductively generated by this rule.
\end{itemize}\end{lem}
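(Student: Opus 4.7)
The plan is to view this two-bullet characterization as just a convenient restatement of the definition of $B(M)^\infty$ as the least $\Upsilon(M)$-closed class. The first bullet amounts to showing that $B(M)^\infty$ itself is closed under the stated rule; the second amounts to showing that any class closed under the rule is $\Upsilon(M)$-closed, so by minimality contains $B(M)^\infty$.

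For the first bullet, I would assume $f \in W(M)$ together with the hypothesis $\stuple{f\tuple{},\neumarral{1}} \in M \to \allin{n}{\omega} f^{(n)} \in B(M)^\infty$, and exhibit the witnessing subset directly by taking $B = V(f,M) = \setb{f^{(n)}}{n \in \omega \wedge \stuple{f\tuple{},\neumarral{1}} \in M}$. This is a set by Bounded Separation, since $\setb{f^{(n)}}{n \in \omega}$ is a set by Replacement on $\omega$. Any element of $B$ is some $f^{(n)}$ whose presence in $B$ already witnesses $\stuple{f\tuple{},\neumarral{1}} \in M$, so the hypothesis yields $f^{(n)} \in B(M)^\infty$, whence $B \subseteq B(M)^\infty$. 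Since $f \in W(M)$ and $B \simeq V(f,M)$, we have $\stuple{B,f} \in \Upsilon(M)$, and the $\Upsilon(M)$-closure of $B(M)^\infty$ delivers $f \in B(M)^\infty$.

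For the second bullet, let $C$ be any class closed under the stated rule. The goal is then to check that $C$ is automatically $\Upsilon(M)$-closed; minimality of $B(M)^\infty$ then gives $B(M)^\infty \subseteq C$, which combined with the first bullet completes the characterization. To see this, suppose $B \subseteq C$ and $\stuple{B,f} \in \Upsilon(M)$, so that $f \in W(M)$ and $B \simeq V(f,M)$. Under the assumption $\stuple{f\tuple{},\neumarral{1}} \in M$, one has $V(f,M) = \setb{f^{(n)}}{n \in \omega}$, so each $f^{(n)}$ belongs to $B$ and hence to $C$. That verifies the premise of the rule applied to $f$, which yields $f \in C$.

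I do not anticipate a real obstacle: both directions are just unfolding the two inductive-closure descriptions against one another, with the hypothesis $\stuple{f\tuple{},\neumarral{1}} \in M$ serving as the filter that relates $V(f,M)$ to the literal family $\setb{f^{(n)}}{n \in \omega}$. The only minor bookkeeping is that $\Upsilon(M)$ is defined up to $\simeq$, so one should make sure that class membership respects extensional equality; in the ambient $\mathbf{CZF}^-$, $\simeq$ is just set equality, so this is automatic.
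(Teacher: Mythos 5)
Your proposal is correct and follows essentially the same route as the paper: both directions are obtained by unfolding the least-$\Upsilon(M)$-closed-class definition, taking $B = V(f,M)$ as the witnessing premise set for the closure direction and checking that any class closed under the stated rule is $\Upsilon(M)$-closed for the induction direction. Your extra bookkeeping (set-hood of $V(f,M)$ and the $\simeq$ versus $=$ point) is consistent with what the paper defers to the following lemma.
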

\begin{proof} First, note that if $f \in W(M)$, then $f^{(n)} \in W(M)$ for all $n$. Now, suppose
$$\stuple{f\tuple{},\neumarral{1}} \in M \to \allin{n}{\omega} f^{(n)} \in B(M)^\infty$$
It follows that $V(f,M) \subseteq B(M)^\infty$, and so $f \in B_M^\infty$, since $B(M)^\infty$ is $\Upsilon(M)$-closed. Conversely, if there is a class $B'$ such that
$$\allin{f}{W(M)} (\stuple{f\tuple{},\neumarral{1}} \in M \to \allin{n}{\omega} f^{(n)} \in B') \to f \in B'$$
Then, $B'$ is $\Upsilon(M)$-closed, and $B(M)^\infty$ is the least such class so $B(M)^\infty \subseteq B'$, giving the required induction principle. \end{proof}

\begin{lem}\label{lem-cr-bar} If $M$ is a subset of $\omega \times \p1$, then $B(M)^\infty$ is a set. \end{lem}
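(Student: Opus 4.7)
The plan is to use the Regular Extension Axiom to bound the inductive generation of $B(M)^\infty$ inside a regular set, thereby promoting what looks like a proper class to a set. The crucial observation is that every code $f$ is a natural number and every premise set $V(f,M)$ is a subset of $\omega$, so the entire inductive definition $\Upsilon(M)$ has its rules parameterised inside $\omega$.

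First I would apply \formalsys{REA} to the set $\{M,\omega\}$ to obtain a regular set $R$ with $M \in R$ and $\omega \in R$. By transitivity of $R$ we then have $\omega \subseteq R$, so every premise set $V(f,M) \subseteq \omega \subseteq R$, while every $f \in W(M) \subseteq \omega \subseteq R$. Thus every rule $\stuple{V(f,M), f}$ of $\Upsilon(M)$ has both its premise set and its conclusion lying inside $R$, and regularity of $R$ provides Strong Collection for collecting the ranges of the indexing maps $n \mapsto f^{(n)}$.

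Next I would invoke the Set Inductive Definition Theorem of \cite[\S 5]{AR}, which is the standard mechanism by which \formalsys{REA} converts class inductive definitions into set inductive definitions: when the rules of a class inductive definition are parameterised inside a regular set, the inductively generated class is in fact a set. Applied to $\Upsilon(M)$ with the regular set $R$, this yields $B(M)^\infty$ as a set. Because $B(M)^\infty \subseteq \omega \subseteq R$, the set produced by the theorem coincides with $B(M)^\infty$ itself, not merely with its trace $B(M)^\infty \cap R$. Lemma \ref{lem-cr-binf} can be used to double-check that the result of this set construction agrees with the class originally defined.

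The main obstacle is a bookkeeping one: verifying that $\Upsilon(M)$ really does fit the hypotheses of the Set Inductive Definition Theorem. This amounts to unpacking $V(f,M)$ and $W(M)$, noting that the shift operation $f \mapsto f^{(n)}$ maps $\omega$ into $\omega$, and observing that the only set parameter the rules depend on is $M$, which lies in $R$ by construction. Once these conditions are checked, the proof is essentially a citation.
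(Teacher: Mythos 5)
Your overall strategy --- reduce to the inductive-definition machinery of \cite[\S 5]{AR}, where \formalsys{REA} is what turns a class inductive definition into a set --- is the same as the paper's, and the observation that all conclusions and all premise sets of $\Upsilon(M)$ live inside $\omega$ is correct. But the proposal leans throughout on the inference ``$X \subseteq \omega$, hence $X$ is (or might as well be) a set,'' and in \formalsys{CZF} that inference is invalid: there is no full Separation, so a subclass of $\omega$ defined by an unbounded formula need not be a set. This bites at exactly one point, which is the real content of the paper's proof and which your ``bookkeeping'' paragraph does not identify: $W(M) = \setb{f \in \omega}{\allin{n}{\omega} \defd{fn} \wedge \someset{h} \stuple{fn,h} \in M}$ contains the unbounded quantifier $\someset{h}$, so Bounded Separation does not apply to it directly. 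The paper repairs this by noting that $\someset{h}\stuple{fn,h}\in M$ may be replaced by $\somein{h}{ran(M)}\stuple{fn,h}\in M$, where $ran(M)$ is a set by Replacement; only then is $W(M)$ a set, and only then can one verify the two boundedness conditions on $\Upsilon(M)$ (for each premise set the class of conclusions is a set, and the premise sets $\setb{V(f,M)}{f\in W(M)}$ form a set) that the cited theorem actually requires.

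A secondary point: the hypothesis of the theorem you want to invoke is not ``the rules are parameterised inside a regular set'' but that the operator is \emph{bounded} in the sense just described; the appeal to \formalsys{REA} happens inside the proof of that theorem, to close the iteration off at a set-indexed stage, not as a preliminary step of choosing a regular $R \supseteq \{M,\omega\}$. Your $R$, and the remark that its regularity supplies Strong Collection for the maps $n \mapsto f^{(n)}$, do no work: each $V(f,M)$ is already a set by Replacement and Bounded Separation with no appeal to regularity. With the $W(M)$ point repaired and the boundedness conditions stated correctly, the rest of your argument coincides with the paper's.
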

\begin{proof} First, $V(f,M)$ is indeed a set, being given by a bounded formula. $W(M)$ is also a set, because the quantification over $\mathbb{V}$ can be replaced by quantification over $ran(M)$, the range of $M$; since $M$ is a set, $ran(M)$ is a set by Replacement and the properties of the set-theoretic ordered pair. 

The operator $\Upsilon(M)$ is ``bounded'': for a given $B$, the class of all conclusions $f$ is given by a set, namely $\setb{f \in W(M)}{B \simeq V(f,M)}$; and, the class of all possible premise sets $B$ is given by a set, namely $\setb{V(f,M)}{f \in W(M)}$. Using Regular Extension, it can be shown that if an operator is bounded, the class it inductively defines is actually a set \cite[\S 5]{AR}. \end{proof}

\begin{lem} $M^\infty$ is coherent. \end{lem}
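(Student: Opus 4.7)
The plan is to mirror the proof of Lemma \ref{lem-c-con}, adding one more case for the new $\Omega$-clause of $\Phi'$. Define the coherent subclass
\[
M' = \setb{\stuple{x,h} \in M^\infty}{\allin{h'}{\p1} \stuple{x,h'} \in M^\infty \to h \simeq h'},
\]
and show that $M'$ is $\Phi'$-closed; the minimality of $M^\infty$ will then force $M^\infty \subseteq M'$, which is exactly the coherence claim. So assume $M \subseteq M'$ and $\stuple{M,a} \in \Phi'$, and aim to conclude $a \in M'$.

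If $\stuple{M,a}$ actually lies in $\Phi$, the argument of Lemma \ref{lem-c-con} carries over verbatim, since it uses $M$ only through $X(M)$, $Y(M)$ and $Z(f,g,M)$, and the uniqueness computations there depend solely on $M \subseteq M'$. So the new work is for $\stuple{M,a} \in \Phi' \setminus \Phi$, where $a \simeq \stuple{\Omega(f),\; \setb{\neumarral{0}}{f \in B(M)^\infty}}$ for some $f \in W(M)$. Suppose additionally $\stuple{\Omega(f), h''} \in M^\infty$. Because the code for $\Omega(\cdot)$ (namely $4f+2$) is syntactically distinct from any $\Xi$-code, the $\Phi$-clause cannot have produced this pair, so the $\Omega$-clause did: there exists $N \subseteq M^\infty$ with $f \in W(N)$ and $h'' \simeq \setb{\neumarral{0}}{f \in B(N)^\infty}$.

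The remaining task is to show $f \in B(M)^\infty \iff f \in B(N)^\infty$, which will force the two $\setb{\neumarral{0}}{\cdot}$ displays to be extensionally equal and hence $h'' \simeq h$. The key input is the already-established coherence of $M \subseteq M'$: for each $n$ with $fn$ defined, the $\p1$-witness guaranteed by $f \in W(M)$ must match extensionally any $\p1$-witness in $N \subseteq M^\infty$, so $\stuple{fn, \neumarral{1}} \in M$ iff $\stuple{fn, \neumarral{1}} \in N$. Iterated shifts $f^{(n_1)}, f^{(n_1)(n_2)}, \dots$ still lie in $W(M) \cap W(N)$, so the same equivalence holds pointwise throughout the recursion. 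Then, using the induction principle of Lemma \ref{lem-cr-binf}, $B(N)^\infty$ can be viewed as $\Upsilon(M)$-closed along the shifts of $f$, yielding $f \in B(M)^\infty \Rightarrow f \in B(N)^\infty$; the reverse direction is symmetric.

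The main obstacle I expect is precisely this last transfer: the premise of the $\Upsilon(M)$-rule demands $g \in W(M)$, not $g \in W(N)$, so one cannot simply plug $B(N)^\infty$ into Lemma \ref{lem-cr-binf} as the target class globally. The remedy is to restrict attention to the subclass indexed by the shifts of the particular $f$ under consideration, on which $W(M)$, $W(N)$, and the $\neumarral{1}$-coherence all hold simultaneously by the argument above, and to run the transfer induction on that restricted class. Once that technicality is handled, the rest is bookkeeping, and we conclude that $M'$ is $\Phi'$-closed.
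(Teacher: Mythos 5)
Your proposal is correct and follows essentially the same route as the paper: define the coherent subclass $M'$, defer the $\Xi$ case to Lemma \ref{lem-c-con}, and in the $\Omega$ case use $M \subseteq M'$ to get uniqueness of the $\p1$-witnesses for the values of $f$, forcing $f \in B(M)^\infty \leftrightarrow f \in B(N)^\infty$ and hence $h \simeq h'$. The transfer induction you flag as a technicality is real but is exactly the step the paper passes over with ``these are the same $fn$'s,'' and your restricted-class remedy handles it.
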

\begin{proof} As with Lemma \ref{lem-c-con}, define $M'$ as the coherent subclass of $M^\infty$. Now suppose there is a set $M \subseteq M'$ and a set $a$ such that $\stuple{M,a} \in \Phi'$. Lemma \ref{lem-c-con} takes care of the $\Xi$ case to show that $a \in M'$. So consider the $\Omega$ case: there is an $f \in W(M)$ and an $h \in \p1$ such that $a \simeq \stuple{\Omega(f),h}$ and $h \simeq \setb{\neumarral{0}}{f \in B(M)^\infty}$. 

By definition of $W(M)$, for all $n \in \omega$, $fn$ is defined and there is a $j$ such that $\stuple{fn,j} \in M$. $M \subseteq M'$, so this $j$ is unique. Now $a \in M^\infty$ since $M \subseteq M^\infty$ and $\stuple{M,a} \in \Phi'$. Suppose there is another $\stuple{\Omega(f),h'} \in M^\infty$. Then there exists a set $N \subseteq M^\infty$ such that $f \in W(N)$, and $h' \simeq \setb{\neumarral{0}}{f \in B(N)^\infty}$. By definition of $W$, for all $n$, $fn$ is defined and there is a $j' \in \p1$ such that $\stuple{fn,j'} \in N$. But these are the same $fn$'s as for $W(M)$, and the corresponding $j$'s are unique, so $j' \simeq j$. Therefore $f \in B(M)^\infty \leftrightarrow f \in B(N)^\infty$, so $h' \simeq h$. 

So again, $h$ being unique, $a \in M'$, and the class $M'$ is $\Phi'$-closed. But $M^\infty$ is the least such class, so all of $M^\infty$ is coherent. \end{proof}

\begin{lem}\label{lem-cr-omega} In $M^\infty$, \eqref{eq-om}, \eqref{eq-oti}, and \eqref{eq-ote} are true. \end{lem}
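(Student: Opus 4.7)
The plan is to follow the pattern of Lemma \ref{lem-c-xi}: exploit coherence of $M^\infty$ to collect witnesses into sets, invoke Lemma \ref{lem-cr-bar} to ensure that $B(M)^\infty$ is a set when $M$ is, and use the induction/closure principle of Lemma \ref{lem-cr-binf} for the recursive case.

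For \eqref{eq-om}, assume $fn$ is defined and Meaningful for every $n$. By coherence, the witness for each $fn$ is unique, so
$$M_f = \setb{\stuple{fn,j} \in M^\infty}{n\in\omega \wedge \defd{fn}}$$
is a set by Replacement and Bounded Separation, with $M_f \subseteq M^\infty$ and $f \in W(M_f)$. By Lemma \ref{lem-cr-bar}, $B(M_f)^\infty$ is also a set, so $h = \setb{\neumarral{0}}{f \in B(M_f)^\infty}$ is an element of $\p1$. Then $\stuple{M_f,\stuple{\Omega(f),h}} \in \Phi'$ by the $\Omega$ clause, and $\Phi'$-closure of $M^\infty$ yields $\stuple{\Omega(f),h}\in M^\infty$, i.e., $\mathcal{M}(\Omega(f))$.

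For \eqref{eq-oti}, assume $\mathcal{M}(\Omega(f))$ and $\mathcal{T}(f\tuple{}) \to \allnat{n} \mathcal{T}(\Omega(f^{(n)}))$. Unpacking the witness for $\Omega(f)$ gives a set $M \subseteq M^\infty$ with $f \in W(M)$ and some $h$ such that $\stuple{\Omega(f),h}\in M^\infty$ and $h \simeq \setb{\neumarral{0}}{f \in B(M)^\infty}$. I claim $f \in B(M)^\infty$; Lemma \ref{lem-cr-binf} reduces this to showing $\stuple{f\tuple{},\neumarral{1}} \in M \to \allin{n}{\omega} f^{(n)} \in B(M)^\infty$. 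Assume the premise; then $\mathcal{T}(f\tuple{})$, so $\mathcal{T}(\Omega(f^{(n)}))$, i.e. $\stuple{\Omega(f^{(n)}),\neumarral{1}}\in M^\infty$, for every $n$. Since $f \in W(M)$ gives $f^{(n)} \in W(M)$, we may independently form $\stuple{\Omega(f^{(n)}),h_n}\in M^\infty$ with $h_n \simeq \setb{\neumarral{0}}{f^{(n)} \in B(M)^\infty}$ via $\Phi'$-closure. Coherence of $M^\infty$ at $\Omega(f^{(n)})$ now forces $h_n \simeq \neumarral{1}$, hence $f^{(n)} \in B(M)^\infty$. So $f \in B(M)^\infty$, which gives $h \simeq \neumarral{1}$, and coherence at $\Omega(f)$ yields $\stuple{\Omega(f),\neumarral{1}}\in M^\infty$, i.e., $\mathcal{T}(\Omega(f))$.

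For \eqref{eq-ote}, suppose $\phi$ satisfies the induction hypothesis, and fix $f$ with $\mathcal{T}(\Omega(f))$. Unpacking as above and using coherence, there is $M \subseteq M^\infty$ with $f \in W(M)$ and $f \in B(M)^\infty$. Apply the induction clause from Lemma \ref{lem-cr-binf} to the class $\{g : \phi(g)\}$: it suffices to show that for all $g \in W(M)$, if $\stuple{g\tuple{},\neumarral{1}} \in M \to \allin{n}{\omega} \phi(g^{(n)})$, then $\phi(g)$. Any such $g$ has $\mathcal{M}(\Omega(g))$ by the construction of \eqref{eq-om}, and for $g \in W(M)$ there is some $j$ with $\stuple{g\tuple{},j}\in M \subseteq M^\infty$, whence coherence of $M^\infty$ gives $\mathcal{T}(g\tuple{}) \leftrightarrow \stuple{g\tuple{},\neumarral{1}} \in M$. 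Thus the hypotheses of the assumed induction principle on $\phi$ are met, yielding $\phi(g)$, so $\phi(f)$ in particular.

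The main obstacle is the bookkeeping around coherence. The statements \eqref{eq-om}--\eqref{eq-ote} talk about truth in $M^\infty$, but the induction on $B$ is parameterised by a specific set $M$ drawn out of $M^\infty$. In \eqref{eq-oti}, the subcomponents $\Omega(f^{(n)})$ may a priori have their Meaningfulness witnessed by entirely different sets $M_n \subseteq M^\infty$ than the $M$ that witnesses $\Omega(f)$; one has to leverage coherence of $M^\infty$ (rather than try to enlarge $M$ monotonically, which would change whether $f \in B(M)^\infty$) to transfer the value $\neumarral{1}$ across these different witnessing sets so that the induction on $B(M)^\infty$ can be fed the right recursive assumption. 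The same coherence observation underlies the bridge in \eqref{eq-ote}.
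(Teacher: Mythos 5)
Your proof is correct and follows the same route as the paper: construct the set of witnesses for the $fn$'s via coherence and Replacement, invoke Lemma \ref{lem-cr-bar} to make $B(M)^\infty$ a set, use $\Phi'$-closure for \eqref{eq-om}, and reduce \eqref{eq-oti} and \eqref{eq-ote} to Lemma \ref{lem-cr-binf}. The paper dispatches the last two assumptions in a single sentence, whereas you spell out the coherence bookkeeping needed to reconcile Truth in $M^\infty$ with membership in $B(M)^\infty$ for a particular witnessing set $M$; that elaboration is sound and fills in exactly what the paper leaves implicit.
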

\begin{proof} Suppose $f$ is a sequence of Meaningful conditions. Then $f \in W(M^\infty)$. By the previous lemma all the corresponding $\stuple{fn,h} \in M^\infty$ are unique, so by Replacement, the following is a set:
$$M_V = \setb{\stuple{fn,j} \in M^\infty}{n \in \omega}$$ 
$B(M_V)^\infty$ is a set by Lemma \ref{lem-cr-bar}, and $M^\infty$ is $\Phi$-closed so $\stuple{\Omega(f),\setb{\neumarral{0}}{f \in B(M_V)^\infty}} \in M^\infty$, so $\Omega(f)$ is Meaningful, as required by \eqref{eq-om}. Furthermore, $f \in B(M_V)^\infty$ expresses the correct Truth conditions for \eqref{eq-oti} and \eqref{eq-ote}, as shown by Lemma \ref{lem-cr-binf}. \end{proof}

\begin{mthm}\label{mthm-cr-ect} Assumptions \eqref{eq-vi}, \eqref{eq-ve}, \eqref{eq-xit}, \eqref{eq-xim}, \eqref{eq-om}, \eqref{eq-oti}, and \eqref{eq-ote}, along with \formalsys{ECT_V}, are true in a realisability interpretation, provided that the seven assumptions hold (without \formalsys{ECT_V}) in the underlying meta-theory. \end{mthm}
\begin{proof} It is the same interpretation as in Meta-theorem \ref{mthm-c-ect}, and the previous proof disposes of \formalsys{ECT_V} and the first four assumptions. Moreover, \eqref{eq-om} and \eqref{eq-oti} are trivially disposed of, being almost-negative, in the same way as \eqref{eq-vi}, \eqref{eq-xit}, and \eqref{eq-xim}. What remains is \eqref{eq-ote}, which in some instances has a non-trivial witness, and it is disposed of in a way similar to \eqref{eq-ve}. Define:
$$\rho = \Lambda e.\Lambda f.(e'f)(\Lambda n.(\rho e)(f^{(n)}))$$
Here $e'$ is a simple syntactic transformation introduced below for clarity of presentation. It will follow that $(\rho e)f$ iterates $e$ by bar recursion along $f$, and as a result, $\rho$ witnesses \eqref{eq-ote} for any $\phi$, provided \eqref{eq-ote} holds in the meta-theory. This is argued in a way which is similar to Lemma \ref{mthm-c-ect}.

Suppose $e$ witnesses the antecedent of \eqref{eq-ote}:
$$e \Vdash \allnat{f} \mathcal{M}(\Omega(f)) \wedge (\mathcal{T}(f\tuple{}) \to \allnat{n} \phi(f^{(n)})) \to \phi(f)$$
which means that \begin{align*}
\allnat{f,a} &(\mathcal{M}(\Omega(f)) \wedge (\mathcal{T}(f\tuple{}) \to \allnat{n} \defd{an} \wedge (an \Vdash \phi(f^{(n)}))) \\
 &\to \defd{(e'f)a} \wedge (e'f)a \Vdash \phi(f)
\end{align*}
The transformation $e \mapsto e'$ causes $e$ to ignore the trivial witnesses for $\mathcal{M}$ and $\mathcal{T}$. Suppose, by way of inductive hypothesis, that $f$ is a sequence of Meaningful conditions, and that:
$$ \mathcal{T}(f\tuple{}) \to \allnat{n} \defd{(\rho e)(f^{(n)})} \wedge (\rho e)(f^{(n)}) \Vdash \phi(f^{(n)}) $$
By \eqref{eq-om}, $\mathcal{M}(\Omega(f))$, so, substituting $\Lambda n.(\rho e)(f^{(n)})$ for $a$ in the assumption on $e$, it follows that:
$$\defd{(e'f)(\Lambda n.(\rho e)(f^{(n)})} \wedge (e'f)(\Lambda n.(\rho e)(f^{(n)}) \Vdash \phi(f)$$
By the definition of $\rho$:
$$\defd{(\rho e)f} \wedge (\rho e)f \Vdash \phi(f)$$
If the induction principle of \eqref{eq-ote} holds in the underlying meta-theory, it follows that:
$$\allnat{f} \mathcal{T}(\Omega(f)) \to \defd{(\rho e)f} \wedge (\rho e)f \Vdash \phi(f)$$
This is the consequent of \eqref{eq-ote}. So, $\rho$ witnesses \eqref{eq-ote}, provided the same schema holds in the underlying meta-theory. \end{proof}

\begin{mthm}\formalsys{CZF+REA} interprets \formalsys{CZF+REA+RDC+ESP} in a way that preserves almost-negative sentences of arithmetic.\end{mthm}
\begin{proof} Lemmas \ref{lem-c-xi} and \ref{lem-c-set} work here as well, and Lemma \ref{lem-cr-omega} uses only arguments available in \formalsys{CZF+REA}. Therefore the seven assumptions can be interpreted into \formalsys{CZF+REA}. The realisability interpretation of Meta-theorem \ref{mthm-cr-ect} preserves almost-negative formulas of arithmetic, and in this interpretation \formalsys{CZF+REA+RDC+ESP} is true. \end{proof}
\begin{rem} Again relative consistency for \formalsys{ESP} \cite[\S 8]{MR}, and for \formalsys{RDC} \cite[\S 5]{RG} was already known, though this combined result is obtained without detours. Here there is no point in considering \formalsys{CZF^-+REA}: \formalsys{REA} implies Subset Collection \cite[\S 10]{AR}. \end{rem}

\section{Extended Church's Thesis and Omniscience}

The assumptions in this paper are unproblematic both classically and constructively, with the exception of \formalsys{ECT}. One of the consequences of even weak constructive forms of Church's Thesis is:
$$\neg \allnat{e,i} ((\somenat{u} \mathbf{T}(e,i,u)) \vee (\neg \somenat{u} \mathbf{T}(e,i,u)))$$
Almost all mathematicians think classically, and form that perspective this assertion, and therefore \formalsys{ECT}, can only be false. But it is provably consistent, and can be understood indirectly in terms of a realisability interpretation in a classical meta-theory, as in section 8. When it is set up this way, not only is \formalsys{ECT} valid, but so is Markov's Principle. No form of this principle was needed in this article, though, so it isn't actually necessary that the meta-theory be classical. This is why things still work when the realisability interpretations are set up in constructive meta-theories, as in sections 7 and 11.

But it isn't actually necessary to have a meta-theory at all. It is possible to understand \formalsys{ECT} literally instead. This does then force literal acceptance of the indeterminacy of the halting problem. Of course this raises some philosophical questions, but at the same time, it disposes of others. A well-known one is the question of non-standard models of first-order arithmetic. It is disposed of by adapting Tennenbaum's theorem to \formalsys{ECT}, from which it follows that there simply aren't any non-standard models \cite[\S 4.1]{CM}. A less well-known one, which perhaps this article will help make better known, is the justification of set theory. The principles of set theory are by no means self-evident, even from a classical point of view. They are taken as axioms. They can be argued for in various ways, such as in terms of their consequences \cite[\S I]{PM}. The fact that they are in turn provable consequences of \formalsys{ECT} plus a simple, precise definition (and perhaps one with no non-standard models) disposes of this issue. Furthermore since the definition is entirely numerical, it also disposes of the ontology of mathematical entities other than the natural numbers. 

Thus the principles of set theory refer to clearly defined objects, they mean what they say, and they are not axioms, but theorems. Taking \formalsys{ECT} at face value allows set theory to be taken at face value. This point of view has some claim to the term {\it realism}. Now, it may be objected that if the natural numbers {\it really exist}, the halting problem simply must be determinate. A good answer to this should involve a proper account of realism in mathematics, and is beyond the scope of this author. Instead, it will be shown that even if determinacy of the halting problem is conceded, not much is lost.

Richman introduced a weaker assumption that has many of the consequences of Church's Thesis \cite{FR}. In summary, often a constructive proof involving Church's Thesis is really only relying on the fact that the computable partial sequences can be computably enumerated. The adjective ``computable'' is actually inessential. It would work just as well with the assumption that the X-able partial sequences can be X-ably enumerated. This is indeed exactly what is being used here to get a numerical interpretation. 

Another way of looking at this is that absolute computability can be replaced with computability relative to some unspecified oracle $\mathcal{O}$. Consider the sets of the form $\setbn{n}{fn}{gn \simeq hn}$, or $\setbn{n}{fn}{(pn \text{ is True})}$, where the sequences are now only required to be computable relative to $\mathcal{O}$. The partial functions which are computable relative to $\mathcal{O}$ can be enumerated computably relative to $\mathcal{O}$, so this can still be formalised entirely numerically. Replace the absolute predicate $\mathbf{T}$ with a relativised predicate $\mathbf{T}^\mathcal{O}$, of which it is only assumed that it is decidable, univocal in its output, and complete under, but not necessarily limited to, the usual computable operations. Define \formalsys{ECT_0^\mathcal{O}} to be the same schema as \formalsys{ECT_0}:
$$(\allnat{n} \phi(n) \to \somenat{m} \psi(n,m)) \to \somenat{e} \allnat{n} \phi(n) \to \defd{en} \wedge \psi(n,en)$$
But now redefine the notations $en$ and $\defd{en}$ to refer to the predicate $\mathbf{T}^\mathcal{O}$ rather than just $\mathbf{T}$. Similarly \formalsys{ECT_V^\mathcal{O}} can be defined, and the reader can check that all the proofs of sections 2-6 go through otherwise unchanged. In the relativised interpretation, some extra sentences might be decidable on account of $\mathcal{O}$, but \formalsys{CZF+RDC+ESP} is still validated. For instance if it is additionally assumed that $\mathcal{O}$ is able to (but again, not necessarily limited to being able to) solve the halting problem, then the result is a consistent extension of \formalsys{CZF+RDC+ESP} with a so-called Omniscience principle:
$$\allnat{x} \phi(x) \vee \neg \phi(x)$$
where $\phi$ is any $\Sigma_1$ sentence of arithmetic, from which of course it follows that the halting problem is determinate.

To see that this really is a consistent theory, interpret it back into plain \formalsys{CZF}. Now, the direct embedding of section 7 will longer work, since $\mathbf{T}^\mathcal{O}$ is not actually decidable for any non-trivial $\mathcal{O}$. On the other hand, the embedding into \formalsys{ID_1} of section 8 was already classical, and stands with just the change in notation mentioned for this $\mathcal{O}$. And if it is further objected that if the natural numbers {\it really exist}, then $\Sigma_2$ sentences must also be determinate, then this embedding can be trivially extended to accommodate:

\begin{mthm}For any fixed $n$, \formalsys{CZF^-} can interpret \formalsys{CZF+RDC+ESP}, plus Omniscience for all $\Sigma_n$ sentences of arithmetic, in a way that preserves the $\Pi_2$ sentences of arithmetic.\end{mthm}
\begin{proof} For $n$ fixed, the truth predicate for $\Sigma_n$ sentences can be defined in classical first-order arithmetic. So an appropriate $\mathbf{T}^\mathcal{O}$ can be non-inductively defined. Therefore $\mathcal{T}$, $\mathcal{M}$ and $\mathcal{V}$ can be defined with a non-nested inductive definitions in \formalsys{ID_1}, allowing \formalsys{CZF+RDC+ESP} plus the required Omniscience principle to be interpreted, as in Meta-theorem \ref{mthm-c-id1}; and again, \formalsys{CZF^-} can indirectly interpret \formalsys{ID_1}. \end{proof}

\begin{cor}\formalsys{CZF^-} proves the same $\Pi_2$ sentences of arithmetic as \formalsys{CZF+RDC+ESP} plus Omniscience for all sentences of first-order arithmetic.\end{cor}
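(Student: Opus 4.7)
The plan is to obtain this corollary directly from the preceding Meta-theorem by a simple compactness-style observation on formal proofs. The easy direction is immediate: \formalsys{CZF^-} is a subtheory of \formalsys{CZF+RDC+ESP} plus arithmetic Omniscience (since \formalsys{CZF^-} is contained in \formalsys{CZF}, which in turn is contained in the larger theory), so any $\Pi_2$ sentence provable in \formalsys{CZF^-} is a fortiori provable in the extension. So the content is entirely in the reverse direction.

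For the non-trivial direction, suppose $\phi$ is a $\Pi_2$ arithmetic sentence that is provable in \formalsys{CZF+RDC+ESP} together with Omniscience for all sentences of first-order arithmetic. A formal derivation of $\phi$ is finite and therefore invokes only finitely many instances of the Omniscience schema, each being the statement $\allnat{x} \psi_i(x) \vee \neg\psi_i(x)$ for some particular arithmetic formula $\psi_i$. Let $N$ be the maximum of the quantifier complexities of the $\psi_i$, so each $\psi_i$ is (syntactically, up to padding with dummy quantifiers) a $\Sigma_N$ formula. Then every Omniscience instance used in the derivation is already an instance of Omniscience for $\Sigma_N$ sentences, so the entire derivation takes place in \formalsys{CZF+RDC+ESP} plus $\Sigma_N$-Omniscience. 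The preceding Meta-theorem, applied to this fixed $N$, yields an interpretation of this subtheory into \formalsys{CZF^-} that preserves $\Pi_2$ arithmetic sentences, and therefore $\phi$ is provable in \formalsys{CZF^-}.

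There is no real obstacle; the two points to check are merely bookkeeping. First, one must confirm that any proof in the extended theory uses only finitely many Omniscience instances, which is just the finiteness of formal derivations. Second, one must confirm that $\Sigma_{n_i}$-Omniscience is subsumed by $\Sigma_N$-Omniscience when $n_i \le N$, which is immediate since every $\Sigma_{n_i}$ formula is also $\Sigma_N$ after inserting unused quantifiers. With these observations the corollary reduces to the Meta-theorem, and no new interpretation needs to be constructed.
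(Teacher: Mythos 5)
Your overall strategy is the same as the paper's: both arguments are compactness arguments that reduce a proof using arbitrary arithmetic Omniscience to a proof using $\Sigma_N$-Omniscience for some fixed finite $N$, and then invoke the preceding Meta-theorem. The easy inclusion direction is fine. But there is a genuine gap in the reduction step. You claim that each formula $\psi_i$ occurring in an Omniscience instance is ``syntactically, up to padding with dummy quantifiers, a $\Sigma_N$ formula.'' This is false for non-prenex formulas: padding does not turn, say, $(\forall y\, \theta) \vee (\forall z\, \chi)$ or a formula with quantifiers nested under implications into a prenex formula. What is actually needed is the prenex normal form theorem, i.e.\ a provable equivalence between $\psi_i$ and some prenex $\Sigma_{n_i}$ formula. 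And here the intuitionistic setting bites: the prenex normal form theorem is \emph{not} valid in intuitionistic predicate logic, since the quantifier-shifting equivalences (e.g.\ $(\forall y\, \theta \to \chi) \leftrightarrow \exists y (\theta \to \chi)$, or pulling $\forall$ out of a disjunction) require decidability of the subformulas involved. So as stated, your derivation of an arbitrary-formula Omniscience instance from a $\Sigma_N$-Omniscience instance is unjustified, and the corollary does not reduce to the Meta-theorem by bookkeeping alone.

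The missing idea is exactly the first sentence of the paper's proof: Omniscience for prenex ($\Sigma_n$) sentences is itself strong enough to \emph{prove} the prenex normal form theorem for arithmetic formulas of bounded complexity, because once all the relevant subformulas are decidable the classical quantifier-shifting equivalences become intuitionistically valid. With that observation, each instance $\allnat{x} \psi_i(x) \vee \neg\psi_i(x)$ for arbitrary arithmetic $\psi_i$ is derivable in \formalsys{CZF+RDC+ESP} plus $\Sigma_N$-Omniscience for $N$ large enough relative to $\psi_i$, and your compactness argument then goes through as intended. The fix is short, but it is a necessary step rather than bookkeeping, and your proposal omits it.
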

\begin{proof} Omniscience for prenex sentences is sufficient to prove the prenex normal form theorem for all sentences of first-order arithmetic. The conclusion now follows by (constructive) compactness: any given proof of a $\Pi_2$ sentence could only appeal to prenex-Omniscience a finite number of times, so, that proof would work with $\Sigma_n$-Omniscience for some finite $n$.  \end{proof}

It would be nicer to have a direct interpretation for the corollary, but this is not attempted here. There is further to go anyway. Hyperarithmetic sentences can be considered. At some point in the hyperarithmetic hierarchy, the required Omniscience principle necessarily will transcend \formalsys{CZF}. Still, in general, if there is a system with classical logic which can define the truth predicate for a certain class of sentences, and then inductively define $\mathcal{T}$, $\mathcal{M}$ and $\mathcal{V}$ on top of it, then, \formalsys{CZF+RDC+ESP} plus Omniscience for that class of sentences could still be interpreted in that classical system. Then it becomes a matter of interpreting that classical system in a constructive system. This would in particular apply to the hyperarithmetic hierarchy up to $\alpha$. Perhaps even a modest critical point can be expected after which the assumption that $\alpha$ is well-founded would be sufficient to prove the consistency of \formalsys{CZF+RDC+ESP} plus Omniscience for the hyperarithmetic hierarchy up to $\alpha$. 

A proper analysis of this is not attempted here either. The relativisation can be applied even beyond this. For example, it is beyond the ability of anything in the hyperarithmetic hierarchy to decide whether computable relations are well-founded (Kleene's $\mathbf{O}$). Yet there is still a consistent extension of \formalsys{CZF+RDC+ESP} for the corresponding Omniscience principle:

\begin{mthm}\label{mthm-o-ck}\formalsys{ID_2(O)^i} (which is \formalsys{HA} plus an inductive definition for the constructive third number class) can interpret \formalsys{CZF+RDC+ESP}, plus Omniscience for the well-foundedness of computable relations, in a way that preserves $\Pi_2$ sentences of arithmetic.\end{mthm}
\begin{proof} The well-foundedness predicate for computable relations can be defined with a classical non-nested inductive definition. Therefore with a second level of inductive definition, $\mathcal{T}$, $\mathcal{M}$ and $\mathcal{V}$ can be defined over an appropriate $\mathbf{T}^\mathcal{O}$, as in Meta-theorem \ref{mthm-c-id1}. This form of inductive definition is possible in \formalsys{ID_2}, which in turn can be interpreted in \formalsys{ID_2(O)^i} \cite{BFPS}. \end{proof}

It is clear that this doesn't represent the limit of relativisation. It is straightforward to accommodate higher constructive number classes, for instance. 

Now, no matter what, given $\mathbf{T}^\mathcal{O}$, the halting problem relativised to $\mathcal{O}$ can always be expressed, and no oracle can ever solve its own relativised halting problem. So, it always be true that:
$$\neg \allnat{e,i} (\somenat{u} \mathbf{T}^\mathcal{O}(e,i,u)) \vee (\neg \somenat{u} \mathbf{T}^\mathcal{O}(e,i,u))$$
In other words, {\it some} relativised halting problem is still indeterminate. But only assumptions on what $\mathcal{O}$ can decide ever need to be made, never assumptions on what $\mathcal{O}$ is limited to. Therefore if something about the natural numbers ought to be determinate, it can be assumed so, and it seems that this relativised indeterminacy is of no consequence.

At least, there is no consequence as long as only the natural numbers are considered. The story is different for the real numbers. There is a consequence of the above, which therefore cannot be avoided by any relativisation:
$$\neg \allin{x}{{}^{\mathbb{N}}\mathbb{N}} (\somenat{n} x(n) = 0) \vee (\neg \somenat{n} x(n) = 0)$$
This is the negation of the so-called Limited Principle of Omniscience (\formalsys{LPO}). Even weaker Omniscience principles of this kind are considered and refuted by Richman \cite{FR2} using weaker forms of Church's thesis, and the refutations may still work with the relativised form. The difference in kind here is that \formalsys{LPO} and its weakenings no longer refer to a well-defined class of sentences involving natural numbers, rather, they demand a certain type of determinacy for arbitrary sequences of numbers. The consequences of \formalsys{LPO} and its kind are felt in analysis rather than number theory \cite{HI}. 

Therefore I believe that if there is a philosophical issue here, it has to come from an ontology of real numbers, which might demand \formalsys{LPO}, or of sets in general, which might require $\p1 \simeq \neumarral{2}$ or the existence of classically uncountable (that is, non-subcountable) sets. There should be no objection solely on the basis of an ontology of the natural numbers, which the good lord created.

\section*{Acknowledgments}
I would like to thank Prof. Peter Aczel for helpful exchanges on this topic, including constructive (and predicative) criticisms of various draft versions of this article.

\end{document}